\newtheorem{thm}{Theorem}
\newtheorem{lem}[thm]{Lemma}
\newtheorem{cor}[thm]{Corollary}
\newtheorem{defi}[thm]{Definition}
\newtheorem{quest}[thm]{Question}
\newtheorem{prop}[thm]{Proposition}
\begin{document}

\title{Which traces are spectral?}

\author{F. Sukochev}
\ead{f.sukochev@unsw.edu.au}
\address{School of Mathematics and Statistics, University of New South Wales, Sydney, 2052, Australia.}
\author{D. Zanin}
\ead{d.zanin@unsw.edu.au}
\address{School of Mathematics and Statistics, University of New South Wales, Sydney, 2052, Australia.}

\begin{abstract} Among ideals of compact operators on a Hilbert space we identify a subclass of those closed with respect to the logarithmic submajorization.  Within this subclass, we answer the questions asked by Pietsch \cite{Pietsch_nachrichten} and by Dykema, Figiel, Weiss and Wodzicki \cite{DFWW}. In the first case, we show that Lidskii-type formulae hold for every trace on  such ideal. In the second case, we provide the description of the commutator subspace associated with a given ideal. Finally, we prove that a positive trace on an arbitrary ideal is spectral if and only if it is monotone with respect to the logarithmic submajorization.
\end{abstract}

\begin{keyword}
Traces \sep operator ideals \sep Lidskii formula

\MSC[2010] 47L20 \sep 47B10\sep 46L52
\end{keyword}

\date{}

\bibliographystyle{plain}

\maketitle

\section{Introduction}

Let $H$ be a separable Hilbert space and let $\mathcal{L}(H)$ be the algebra of all bounded operators on $H.$ The set $\mathcal{L}_1$ of all trace class operators is an ideal in $\mathcal{L}(H).$ It carries a special functional --- the classical trace ${\rm Tr}.$ There is also the description of ${\rm Tr}$ as the sum of eigenvalues.
\begin{equation}\label{original lidskii}
{\rm Tr}(T)=\sum_{n=0}^{\infty}\lambda(n,T),\quad T\in\mathcal{L}_1.
\end{equation}
Here, $\lambda(T)=\{\lambda(n,T)\}_{n\geq0}$ is the sequence of eigenvalues\footnote{Non-zero eigenvalues are repeated according to their algebraic multiplicity and arranged so that $\{|\lambda(n,T)|\}_{n\geq0}$ is a decreasing sequence. If there are only finitely many (or none) non-zero eigenvalues, then all the other components of $\lambda(T)$ are zeros.} of a compact operator $T.$ This result was shown by von Neumann in \cite{vNeumann1932} for self-adjoint operators and then by Lidskii in \cite{Lidskii1959} in general case. Formula \eqref{original lidskii} is now known as Lidskii formula (see e.g. \cite{Simon}).

Fix an orthonormal basis in the Hilbert space $H.$ The subalgebra of $\mathcal{L}(H)$ consisting of all diagonal operators with respect to this basis is naturally isomorphic to the algebra $l_{\infty}$ of all bounded complex sequences. Further, we always identify the algebra $l_{\infty}$ with this diagonal subalgebra. Thus, the notations $x\in\mathcal{L}(H)$ (or $x\in\mathcal{I}$ for some ideal $\mathcal{I}$ in $\mathcal{L}(H)$) make perfect sense for an element $x\in l_{\infty}.$

Identifying the sequence $\lambda(T)$ with an element of $\mathcal{L}(H),$ we can write Lidskii formula as ${\rm Tr}(T)={\rm Tr}(\lambda(T))$ for all $T\in\mathcal{L}_1.$ A natural question concerning the extension of this formula to other ideals and traces on these ideals has been treated in a number of publications (see e.g. \cite{AS,BF,DK,Fackcomm2004,KWsurvey,KLPS,LSZ,Pietsch_nachrichten,SSZ}). In what follows, $\mathcal{I}$ is an ideal in $\mathcal{L}(H)$ and $\varphi$ is a trace on $\mathcal{I},$ i.e. a linear functional $\varphi:\mathcal{I}\to\mathbb{C}$ satisfying the condition
$$\varphi(AB)=\varphi(BA),\quad A\in\mathcal{I},B\in\mathcal{L}(H).$$

The following problem was stated by Pietsch (see p.9 in \cite{Pietsch_nachrichten}).

\begin{quest}\label{spectral question} For which traces $\varphi$ on an ideal $\mathcal{I}$ do we have
\begin{equation}\label{general lidskii}
\varphi(T)=\varphi(\lambda(T)),\quad T\in\mathcal{I}?
\end{equation}
\end{quest}

A given trace $\varphi$ on the ideal $\mathcal{I}$ satisfying \eqref{general lidskii} is called spectral.

Study of traces in general and Question \ref{spectral question} in particular are closely related to the description of the commutator subspace of an ideal $\mathcal{I}$ in $\mathcal{L}(H).$ The latter subspace (denoted by ${\rm Com}(\mathcal{I})$) is a linear span of the elements $AB-BA,$ $A\in\mathcal{I},$ $B\in\mathcal{L}(H).$ The following question was asked in \cite{DFWW} (see also \cite{DFWW2}).

\begin{quest}\label{commutator question} Does the commutator subspace admit a description in spectral terms?
\end{quest}

Note that, for an operator $T\in\mathcal{I},$ we have $T\in{\rm Com}(\mathcal{I})$ if and only if all traces on $\mathcal{I}$ vanish on $T.$ Thus, if Question \ref{spectral question} is answered in positive (in a sense that all traces on $\mathcal{I}$ are spectral) then, for $T\in\mathcal{I},$ we have $T\in{\rm Com}(\mathcal{I})$ if and only if $\lambda(T)\in{\rm Com}(\mathcal{I}).$ Hence, a positive answer to Question \ref{spectral question} implies a positive answer to Question \ref{commutator question} and vice versa.

For normal operators, Question \ref{commutator question} was answered in the affirmative in \cite{Kalton1998} (see Theorem 3.1 there) and in \cite{DFWW} (see Theorem 5.6 there) for arbitrary ideals.

\begin{thm}\label{normal commutator} A normal operator $N\in\mathcal{I}$ belongs to ${\rm Com}(\mathcal{I})$ if and only if $C\lambda(N)\in\mathcal{I}.$
\end{thm}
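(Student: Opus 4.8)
The plan is to reduce to diagonal operators and then prove the two implications separately. Throughout, $C$ denotes the Ces\`aro operator, $(Cx)(n)=\frac{1}{n+1}\sum_{k=0}^{n}x(k)$, and I use the Calkin correspondence between ideals and rearrangement--invariant sequence spaces, writing $\mu(\cdot\,,A)$ for singular numbers. First I would observe that ${\rm Com}(\mathcal I)$ is invariant under conjugation by a unitary $U$: if $T=\sum_j(A_jB_j-B_jA_j)$ then $UTU^{*}=\sum_j\big((UA_jU^{*})(UB_jU^{*})-(UB_jU^{*})(UA_jU^{*})\big)$ and $UA_jU^{*}\in\mathcal I$, since $\mathcal I$ is a two--sided ideal. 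As $N$ is compact and normal, the spectral theorem provides a unitary $U$ with $U^{*}NU={\rm diag}(\lambda(N))$ once the eigenbasis is rearranged so that the diagonal entries occur in the order of $\lambda(N)$ (permutation unitaries also preserve $\mathcal I$). Since conjugate operators have the same eigenvalue sequence, it suffices to prove: for a sequence $d$ with $|d|$ non--increasing and ${\rm diag}(d)\in\mathcal I$, one has ${\rm diag}(d)\in{\rm Com}(\mathcal I)$ if and only if $Cd\in\mathcal I$.

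\textbf{Necessity.} Assume ${\rm diag}(d)=\sum_{j=1}^{r}[A_j,B_j]$ with $A_j\in\mathcal I$, $B_j\in\mathcal L(H)$, and let $P_n$ be the projection onto the first $n+1$ basis vectors. Since $P_n{\rm diag}(d)P_n$ is finite rank, cyclicity of the trace on finite--rank operators gives
\[
\sum_{k=0}^{n}d_k={\rm Tr}\big(P_n{\rm diag}(d)P_n\big)=\sum_{j=1}^{r}\Big({\rm Tr}\big(P_nA_j(1-P_n)B_jP_n\big)-{\rm Tr}\big(P_nB_j(1-P_n)A_jP_n\big)\Big),
\]
the ``$P_n(\cdot)P_n$'' parts of $A_jB_j$ and $B_jA_j$ having equal trace. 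Each remaining term is governed by an off--diagonal corner of some $A_j$. The crude bound (such a corner has rank $\le n+1$ and singular numbers $\le\mu(\cdot\,,A_j)$) only yields $|(Cd)(n)|\le c\sum_j(C\mu(A_j))(n)$, which is insufficient since $\mathcal I$ need not be stable under $C$ ($\mathcal L_1$ already fails this). The genuine argument must refine the corner estimate --- e.g.\ by first reducing, via algebraic identities, to commutators $[A_j,B_j]$ with $B_j$ a partial isometry, and then exploiting the norm decay $\|A_j(1-P_n)\|\to0$ --- so as to land $Cd$ inside $\mathcal I$.

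\textbf{Sufficiency.} Conversely, suppose $Cd\in\mathcal I$. The algebraic engine is the identity $[VM,V^{*}]={\rm diag}\big(m_{k-1}-m_k\big)_{k\ge0}$ (with $m_{-1}:=0$) for $M={\rm diag}(m_k)$ and $V$ the forward shift; choosing $m_k=-\sum_{i=0}^{k}d_i$ gives ${\rm diag}(d)=[VM,V^{*}]$ when $m_k\to0$. But this one--scale construction requires $M\in\mathcal I$, i.e.\ the partial sums of $d$ in $\mathcal I$, which is strictly stronger than $Cd\in\mathcal I$. To make do with the Ces\`aro hypothesis I would instead decompose ${\rm diag}(d)$ in Littlewood--Paley fashion: write it as a finite sum of pieces, the $m$--th of which has partial sums supported on (and oscillating within) the dyadic block $\Delta_m=\{2^m,\dots,2^{m+1}-1\}$ --- so that the corresponding diagonal $M_m$ has controlled size because the partial sums are ``reset'' at the ends of $\Delta_m$ --- plus a coarser remainder collecting the inter--block increments. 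The hypothesis $Cd\in\mathcal I$, together with the dilation--stability possessed by every operator ideal, is exactly what keeps all the $M_m$ and the remainder inside $\mathcal I$; since the $\Delta_m$ are disjoint, each family $\{[V_mM_m,V_m^{*}]\}_m$ sums to a single commutator, and adding the remainder (handled either recursively or by a direct formula) yields ${\rm diag}(d)\in{\rm Com}(\mathcal I)$.

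I expect the sufficiency to be the principal obstacle: the dyadic pieces, the shifts and the diagonals $M_m$ must be arranged so that \emph{every} auxiliary operator provably remains in $\mathcal I$ using only $Cd\in\mathcal I$ and the rearrangement/dilation structure of $\mathcal I$, with all constants uniform across scales so that no logarithmic factor accumulates; and the remainder, which is governed by $Cd$ rather than by $d$ itself, must likewise be shown to lie in $\mathcal I$ and to be a sum of commutators. Necessity is more routine once the corner estimate is correctly set up, but that estimate too must be carried out at the level of singular numbers rather than a single operator norm.
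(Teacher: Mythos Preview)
The paper does not prove this theorem at all: it is quoted as a known result, with the proof attributed to Kalton \cite{Kalton1998} (Theorem~3.1) and to Dykema--Figiel--Weiss--Wodzicki \cite{DFWW} (Theorem~5.6). So there is no ``paper's own proof'' to compare against; the paper treats Theorem~\ref{normal commutator} as a black box used in the proof of Theorem~\ref{main commutator theorem}.

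As for your proposal itself: the overall strategy --- reduce to a diagonal operator, prove necessity via corner/trace estimates on $P_n[A_j,B_j]P_n$, and prove sufficiency via shift commutators organized in dyadic blocks --- is indeed the route taken in \cite{Kalton1998,DFWW}. But what you have written is a plan rather than a proof, and you say so yourself. In the necessity direction you correctly observe that the crude bound $|(Cd)(n)|\le c\sum_j(C\mu(A_j))(n)$ is useless (an ideal need not be Ces\`aro--stable), and then stop, promising that ``the genuine argument must refine the corner estimate''; that refinement \emph{is} the content of the proof, and it is not present. In the sufficiency direction you likewise identify the right mechanism (dyadic resetting of partial sums so that only $Cd$, not the raw partial sums, enters), but you explicitly flag the construction of the blocks, the uniform control of the auxiliary diagonals~$M_m$, and the treatment of the remainder as obstacles still to be overcome.

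In short: your outline is aligned with the existing literature proofs, but both implications contain acknowledged gaps at exactly the nontrivial steps. If you want a self--contained argument, the cited sources carry out precisely the refinements you point to; the paper under review does not, and does not need to.
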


Here, $C:l_{\infty}\to l_{\infty}$ is Cesaro operator defined by
$$Cx=(x(0),\frac{x(0)+x(1)}{2},\frac{x(0)+x(1)+x(2)}{3},\cdots),\quad x=(x(0),x(1),x(2),\cdots)\in l_{\infty}.$$

However, if the operator $T\in\mathcal{I}$ is not normal, the situation becomes substantially more complicated. The following partial answer to Question \ref{commutator question} was obtained by Kalton (see Theorem 3.3. in \cite{Kalton1998}).

\begin{thm}\label{kalton result} Let an ideal $\mathcal{I}$ be geometrically stable. An operator $T\in\mathcal{I}$ belongs to ${\rm Com}(\mathcal{I})$ if and only if $C\lambda(T)\in\mathcal{I}.$
\end{thm}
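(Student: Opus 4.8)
The plan is to reduce both implications to the normal case, Theorem \ref{normal commutator}, by passing to the upper triangular form of $T$. By the classical Ringrose--Schur triangularization, there is a maximal nest of $T$-invariant subspaces relative to which $T=D+Q$, where $D={\rm diag}(\lambda(T))$ (the diagonal part of the nest form has eigenvalue list exactly $\lambda(T)$; any nilpotent Jordan-type contributions of non-scalar atoms are moved into $Q$) and $Q$ is quasinilpotent, lying in the radical of the nest algebra. The first point to settle is that $D$, and hence $Q=T-D$, belongs to $\mathcal{I}$. For this I would record that a geometrically stable ideal is closed with respect to logarithmic submajorization: if $0\le y$ satisfies $\prod_{k=0}^{n}\mu(k,y)\le\prod_{k=0}^{n}\mu(k,x)$ for every $n$, where $x\in\mathcal{I}$ and $\mu(\cdot,\cdot)$ denotes singular numbers, then, $\mu(y)$ being non-increasing, $\mu(n,y)\le\big(\prod_{k=0}^{n}\mu(k,y)\big)^{1/(n+1)}\le\big(\prod_{k=0}^{n}\mu(k,x)\big)^{1/(n+1)}$ and the last sequence belongs to $\mathcal{I}$ by geometric stability, so $y\in\mathcal{I}$. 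Combined with the multiplicative Weyl inequality $\prod_{k=0}^{n}|\lambda(k,T)|\le\prod_{k=0}^{n}\mu(k,T)$ this yields $D\in\mathcal{I}$, and therefore $Q\in\mathcal{I}$. This is the only step that uses geometric stability.

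Next I would show that the quasinilpotent part always lies in the commutator subspace: $Q\in{\rm Com}(\mathcal{I})$ for an arbitrary ideal $\mathcal{I}$. Relative to the triangularizing basis the diagonal of $Q$ vanishes, and this is precisely the case governed by the description of ${\rm Com}(\mathcal{I})$ in \cite{DFWW}: an operator in $\mathcal{I}$ whose diagonal with respect to some orthonormal basis is zero must lie in ${\rm Com}(\mathcal{I})$. Alternatively one argues directly: partition the index set into consecutive finite blocks and split $Q$ into its block-diagonal part --- a direct sum of finite traceless matrices, handled by the standard staircase reduction of traceless operators to sums of commutators --- and its block-strictly-upper-triangular part, which is realized as a single commutator $[X,B]$ with $B$ a bounded diagonal operator and $X\in\mathcal{I}$, the weights $(b_i-b_j)^{-1}$ appearing in $X$ being controlled by the decay of the entries of $Q$ together with its quasinilpotence.

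It remains to combine these facts. Since $\lambda(D)=\lambda(T)$, Theorem \ref{normal commutator} applied to $D$ gives $D\in{\rm Com}(\mathcal{I})$ if and only if $C\lambda(D)=C\lambda(T)\in\mathcal{I}$. Because ${\rm Com}(\mathcal{I})$ is a linear subspace and $Q\in{\rm Com}(\mathcal{I})$,
$$T\in{\rm Com}(\mathcal{I})\iff D=T-Q\in{\rm Com}(\mathcal{I})\iff C\lambda(T)\in\mathcal{I},$$
which is the assertion. (In both directions one first needs $D\in\mathcal{I}$, and this is available since $T\in\mathcal{I}$ throughout, by the first paragraph.)

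I expect the main obstacle to be the quasinilpotent part: one must either invoke the substantial machinery of \cite{DFWW} describing the commutator subspace, or carry out the explicit staircase construction, whose delicate step is keeping the operator that solves the commutator equation for the far-off-diagonal piece inside $\mathcal{I}$ --- exactly where the smallness forced by quasinilpotence together with $Q\in\mathcal{I}$ is used. A secondary, routine point is the reduction to a genuinely atomic triangularizing nest (an orthonormal basis), so that ``the diagonal of $Q$'' is literally meaningful; this is handled as in \cite{DFWW,Kalton1998}.
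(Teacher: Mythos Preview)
Your proposal has a genuine gap at the step $Q\in{\rm Com}(\mathcal{I})$. You assert this holds for an \emph{arbitrary} ideal and that geometric stability is used only to place $D$ in $\mathcal{I}$; but this is false. The paper itself records (Theorem~\ref{dk example}\eqref{dkb}, quoting Example~1.5 of \cite{DykemaKalton1998}) a principal ideal $\mathcal{I}_A$ and a quasinilpotent $Q\in\mathcal{I}_A$ with $Q\notin{\rm Com}(\mathcal{I}_A)$. Neither of your justifications survives this. The result you attribute to \cite{DFWW} --- that zero diagonal in some orthonormal basis forces membership in ${\rm Com}(\mathcal{I})$ --- is not what they prove; Theorem~\ref{normal commutator} (their Theorem~5.6) is for normal operators only, and the non-normal case is explicitly flagged in the introduction as ``substantially more complicated.'' Your staircase alternative hand-waves exactly at the hard point: keeping the operator $X$ with entries $(b_i-b_j)^{-1}Q_{ij}$ inside $\mathcal{I}$ is precisely what fails in the Dykema--Kalton construction, and neither quasinilpotence nor $Q\in\mathcal{I}$ supplies that control. (The ``routine'' reduction to an atomic triangularizing nest is also not routine: compact quasinilpotents such as the Volterra operator admit no upper-triangular form relative to any orthonormal basis.)

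The paper's proof of the stronger Theorem~\ref{main commutator theorem} --- which specializes to the present statement via Lemma~\ref{gs implies cl} --- proceeds quite differently. Rather than exhibiting $Q$ as a commutator directly, it applies the normal-operator criterion Theorem~\ref{normal commutator} to the self-adjoint operators $\Re Q$ and $\Im Q$ separately. The substance lies in proving $C\lambda(\Re Q)\in\mathcal{I}$: Kalton's spectral inequality for quasinilpotents (Theorem~\ref{main quasinilpotent spectral estimate}) together with the combinatorial estimate $\mathbf{S}x\prec\prec_{\log}4(x\oplus x)$ of Theorem~\ref{hardest estimate} yield $C\lambda(\Re Q)\prec\prec_{\log}({\rm const})\,Q^{\oplus4}$ (Proposition~\ref{geom main estimate}), and closure under logarithmic submajorization --- equivalently, geometric stability --- is what then places $C\lambda(\Re Q)$ in $\mathcal{I}$. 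So the hypothesis enters precisely at the step you declared it unnecessary.
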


Recall that an ideal $\mathcal{I}$ is called geometrically stable \cite{Kalton1998} if we have
$$\{(\prod_{m=0}^n\mu(k,T))^{1/(n+1))}\}_{n\geq0}\in\mathcal{I},\quad T\in\mathcal{I}.$$
Here, $\mu(T)=\{\mu(n,T)\}_{n\geq0}$ is the sequence of singular values of a compact operator $T,$ that is, the sequence $\lambda(|T|).$

Motivated by Theorem \ref{kalton result}, the authors of \cite{DFWW2} asked whether the same assertion holds in an arbitrary ideal (see Problem 5.1 in \cite{DFWW2}). In order to answer this question and to extend Kalton's result, we need a concept of logarithmic submajorization\footnote{Although the logarithmic submajorization technique was used already by Weyl in \cite{Weyl_logmaj}, it took a while until Ando and Hiai formally defined this order in \cite{andohiai1994} and applied it to the operator inequalities in \cite{andohiai2011}.} and the class of ideals closed with respect to the latter.

\begin{defi} If $A,B\in\mathcal{L}(H),$ then the operator $B$ is logarithmically submajorized by the operator $A$ (written $B\prec\prec_{\log} A$) if
$$\prod_{k=0}^n\mu(k,B)\leq\prod_{k=0}^n\mu(k,A),\quad n\geq0.$$
\end{defi}

\begin{defi} An ideal $\mathcal{I}$ is said to be closed with respect to the logarithmic submajorization if $B\prec\prec_{\log} A\in\mathcal{I}$ implies that $B\in\mathcal{I}.$
\end{defi}

Every geometrically stable ideal is closed with respect to the logarithmic submajorization (see Lemma \ref{gs implies cl} below). The converse assertion is shown to be true for countably generated ideals by Dykema and Kalton (see Proposition 1.1 and Theorem 1.3  in \cite{DykemaKalton1998}). However, this is not the case for general ideals. Indeed, we show (see Theorem \ref{dk example} below) that the class of ideals closed with respect to the logarithmic submajorization is strictly wider than that of geometrically stable ideals.

Our first main result extends Theorem \ref{kalton result} to a wider class of ideals and answers Question \ref{commutator question} (or Problem 5.1 in \cite{DFWW2}) in the setting of ideals closed with respect to the logarithmic submajorization.

\begin{thm}\label{main commutator theorem} Let an ideal $\mathcal{I}$ be closed with respect to the logarithmic submajorization and let $T\in\mathcal{I}.$ We have $T\in{\rm Com}(\mathcal{I})$ if and only if $C\lambda(T)\in\mathcal{I}.$
\end{thm}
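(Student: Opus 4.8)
The plan is to reduce the general statement to Kalton's Theorem~\ref{kalton result} by interposing a geometrically stable ideal between $\mathcal{I}$ and the operators under consideration. The key observation is that membership of $T$ and $C\lambda(T)$ in $\mathcal{I}$ depends only on a \emph{countable} amount of data — namely the singular value sequences $\mu(T)$ and $\mu(C\lambda(T))$ — so I should be able to work inside a small, well-behaved sub-ideal. Concretely, given $T\in\mathcal{I}$ with $C\lambda(T)\in\mathcal{I}$, I would let $\mathcal{J}$ be the ideal generated by the single operator $\operatorname{diag}(\mu(T)+\mu(C\lambda(T)))$ (or more precisely the smallest geometrically stable ideal containing this diagonal operator, obtained by closing under the geometric-mean operation $\mu\mapsto\{(\prod_{m=0}^n\mu(m))^{1/(n+1)}\}_{n\ge0}$ and iterating). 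Since $\mathcal{I}$ is closed with respect to logarithmic submajorization and each geometric-mean iterate is logarithmically submajorized by the original sequence, every such iterate lies in $\mathcal{I}$; hence $\mathcal{J}\subseteq\mathcal{I}$, $\mathcal{J}$ is geometrically stable by construction, and $T,C\lambda(T)\in\mathcal{J}$.

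The next step handles the two directions. For the ``if'' direction, suppose $C\lambda(T)\in\mathcal{I}$. Then by the construction above $T\in\mathcal{J}$ and $C\lambda(T)\in\mathcal{J}$ with $\mathcal{J}$ geometrically stable, so Theorem~\ref{kalton result} applied in $\mathcal{J}$ gives $T\in\operatorname{Com}(\mathcal{J})$, i.e. $T=\sum_{i}[A_i,B_i]$ with $A_i\in\mathcal{J}\subseteq\mathcal{I}$ and $B_i\in\mathcal{L}(H)$; therefore $T\in\operatorname{Com}(\mathcal{I})$. For the ``only if'' direction, suppose $T\in\operatorname{Com}(\mathcal{I})$, say $T=\sum_{i=1}^{k}[A_i,B_i]$ with $A_i\in\mathcal{I}$, $B_i\in\mathcal{L}(H)$. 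Now let $\mathcal{J}$ be a geometrically stable sub-ideal of $\mathcal{I}$ containing $A_1,\dots,A_k$ (built as above from $\operatorname{diag}(\sum_i\mu(A_i))$). Then $T\in\operatorname{Com}(\mathcal{J})$, and Theorem~\ref{kalton result} in $\mathcal{J}$ yields $C\lambda(T)\in\mathcal{J}\subseteq\mathcal{I}$.

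The technical heart of the argument is the lemma that the smallest ideal containing a given positive compact diagonal operator and closed under the geometric-mean operation is itself geometrically stable and is contained in any logarithmic-submajorization-closed ideal containing the generator. I would prove this by defining the sequence of sequences $x_0=\mu(T)$, $x_{j+1}(n)=\max\{x_j(n),(\prod_{m=0}^n x_j(m))^{1/(n+1)}\}$, checking that each $x_{j+1}\prec\prec_{\log} x_j\prec\prec_{\log}\cdots\prec\prec_{\log} x_0$ (the geometric mean never increases the running products), that each $x_j$ is a decreasing sequence tending to $0$, and letting $\mathcal{J}$ be the principal-type ideal determined by the family $\{x_j\}_{j\ge0}$ together with all their dilations; the stabilization under geometric means is then automatic, and containment in $\mathcal{I}$ follows from the logarithmic-submajorization closure hypothesis applied termwise.

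I expect the main obstacle to be the bookkeeping needed to show that this constructed $\mathcal{J}$ is genuinely an ideal (closed under the appropriate dilations/compressions so that it is a two-sided ideal in $\mathcal{L}(H)$, not merely a hereditary cone of sequences) while remaining geometrically stable after one closure step rather than requiring transfinite iteration. A clean way around this is to first prove that a single application of the geometric-mean map, followed by closing under dilation by factor $2$, already lands inside a fixed logarithmic-submajorization-closed ideal and is idempotent up to equivalence of sequences — so that the geometrically stable ideal it generates is captured in one step. Once that structural lemma is in place, the reduction to Theorem~\ref{kalton result} is essentially formal, as sketched above.
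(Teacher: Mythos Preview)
Your reduction strategy has a fatal gap: the claim that ``each geometric-mean iterate is logarithmically submajorized by the original sequence'' is false. In fact the opposite inequality holds for the running products. Writing $\mathbf{T}x$ for the geometric-mean transform $(\mathbf{T}x)(n)=(\prod_{m=0}^n\mu(m,x))^{1/(n+1)}$, a short computation (take $x=(1,\varepsilon,\varepsilon,\dots)$ with $0<\varepsilon<1$) shows
\[
\prod_{k=0}^{n}(\mathbf{T}x)(k)=\varepsilon^{\,n-\sum_{k=2}^{n+1}1/k}>\varepsilon^{\,n}=\prod_{k=0}^{n}x(k),
\]
so $\mathbf{T}x\not\prec\prec_{\log}x$. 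More decisively, the paper's own Theorem~\ref{dk example}\,(\ref{dkc}) exhibits an ideal $LE(\mathcal{I}_A)$ that is closed with respect to logarithmic submajorization yet \emph{not} geometrically stable; for the generator $A$ one has $\mathbf{T}\mu(A)\notin LE(\mathcal{I}_A)$. Thus your very first iterate can leave $\mathcal{I}$, and no geometrically stable sub-ideal $\mathcal{J}$ with $T\in\mathcal{J}\subseteq\mathcal{I}$ need exist. The related fallback of passing to the logarithmic envelope $LE(\mathcal{I}_T)$ of the principal ideal and invoking the Dykema--Kalton equivalence for countably generated ideals also fails, since the same example shows $LE(\mathcal{I}_A)$ cannot be countably generated.

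The paper does not attempt any such reduction. Instead it works directly: using the Ringrose decomposition $T=N+Q$ with $N$ normal, $\lambda(N)=\lambda(T)$, and $Q$ quasi-nilpotent, it proves a quantitative log-submajorization estimate (Proposition~\ref{geom main estimate}) of the form $C\lambda(\Re Q)\prec\prec_{\log}(1600eQ)^{\oplus 4}$, deduced from Kalton's spectral inequality (Theorem~\ref{main quasinilpotent spectral estimate}) together with the purely combinatorial Theorem~\ref{hardest estimate} on the auxiliary operator $\mathbf{S}$. Closure of $\mathcal{I}$ under $\prec\prec_{\log}$ then forces $C\lambda(\Re Q),C\lambda(\Im Q)\in\mathcal{I}$, whence $Q\in{\rm Com}(\mathcal{I})$ by Theorem~\ref{normal commutator}, reducing everything to the normal case. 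The point is that the needed control on $Q$ is obtained via a log-submajorization inequality rather than by locating a geometrically stable sub-ideal; your proposal bypasses exactly the analytic work (Theorem~\ref{hardest estimate}) that makes the theorem go through.
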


Theorem \ref{main commutator theorem} implies the following Lidskii-type result on traces (in particular, the first part of Theorem \ref{extended kalton} provides an alternative\footnote{For discussions of various approaches to proof of \eqref{original lidskii}, we refer the reader to Chapter 3 in \cite{Simon} } proof of \eqref{original lidskii}). Furthermore, the second part of Theorem \ref{extended kalton} shows that the class of ideals closed with respect to the logarithmic submajorization is optimal for Lidskii formulae \eqref{general lidskii}.

\begin{thm}\label{extended kalton} Let $\mathcal{I}$ be an ideal in $\mathcal{L}(H)$ and let $\varphi$ be a trace on $\mathcal{I}.$
\begin{enumerate}[(a)]
\item\label{kalton first} If $\mathcal{I}$ is closed with respect to the logarithmic submajorization, then $\varphi(T)=\varphi(\lambda(T))$ for all $T\in\mathcal{I}.$
\item\label{kalton second} If $\mathcal{I}$ is not closed with respect to the logarithmic submajorization, then there exists an operator $T\in\mathcal{I}$ such that $\lambda(T)\notin\mathcal{I}.$ In particular, the equality \eqref{general lidskii} makes no sense in this case.
\end{enumerate}
\end{thm}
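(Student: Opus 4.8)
The plan is to deduce Theorem~\ref{extended kalton} from Theorem~\ref{main commutator theorem} together with the standard fact that, for $T\in\mathcal{I}$, membership in ${\rm Com}(\mathcal{I})$ is equivalent to the vanishing of every trace on $T$. For part~\eqref{kalton first}, the first step is to show that $\lambda(T)\in\mathcal{I}$ whenever $T\in\mathcal{I}$ and $\mathcal{I}$ is closed with respect to logarithmic submajorization: indeed, Weyl's inequality gives $\prod_{k=0}^n\mu(k,\lambda(T))=\prod_{k=0}^n|\lambda(k,T)|\le\prod_{k=0}^n\mu(k,T)$, so $\lambda(T)\prec\prec_{\log}T\in\mathcal{I}$, hence $\lambda(T)\in\mathcal{I}$; so the expression $\varphi(\lambda(T))$ is meaningful. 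Then I would consider the operator $T-\lambda(T)$ (recall $\lambda(T)$ is identified with a diagonal operator) and show it lies in ${\rm Com}(\mathcal{I})$ by verifying, via Theorem~\ref{main commutator theorem}, that $C\lambda(T-\lambda(T))\in\mathcal{I}$. Since $T-\lambda(T)$ is quasinilpotent — its nonzero eigenvalues, counted with algebraic multiplicity, coincide with those of $T$ minus those of the diagonal operator $\lambda(T)$, which cancel — we have $\lambda(T-\lambda(T))=0$, so $C\lambda(T-\lambda(T))=0\in\mathcal{I}$ trivially. Therefore $T-\lambda(T)\in{\rm Com}(\mathcal{I})$, every trace vanishes on it, and $\varphi(T)=\varphi(\lambda(T))$.

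For part~\eqref{kalton second}, the contrapositive-style construction is as follows. If $\mathcal{I}$ is not closed with respect to logarithmic submajorization, there exist operators $B\prec\prec_{\log}A\in\mathcal{I}$ with $B\notin\mathcal{I}$; by replacing $B$ with $\mu(B)$ (a positive diagonal operator, which still satisfies $\mu(B)\prec\prec_{\log}A$ and is not in $\mathcal{I}$ since ideals are closed under $\mu$-equivalence), and $A$ with $\mu(A)$, we may take $A,B$ to be commuting positive diagonals with $B\notin\mathcal{I}\ni A$ and $\prod_{k=0}^n B(k)\le\prod_{k=0}^n A(k)$ for all $n$. The task is then to manufacture an operator $T\in\mathcal{I}$ whose eigenvalue sequence $\lambda(T)$ has $\mu$-behaviour comparable to $B$, forcing $\lambda(T)\notin\mathcal{I}$. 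This is the heart of the argument: one builds a weighted shift (or a block-diagonal sum of finite upper-triangular nilpotent-type blocks) whose singular values are controlled by $A$ — so that $T\in\mathcal{I}$ — but whose eigenvalues, read off the diagonal of each triangular block, reproduce $B$ up to $\mu$-equivalence. The geometric-mean relation $\prod B(k)\le\prod A(k)$ is exactly what permits such a triangular block to exist with prescribed diagonal dominated in the submajorization sense by its singular values; this is the operator-theoretic converse to Weyl's inequality used on finite matrices.

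The main obstacle is precisely this last construction: passing from the numerical inequality $\prod_{k=0}^n B(k)\le\prod_{k=0}^n A(k)$ to an actual operator $T\in\mathcal{I}$ with $\mu(\lambda(T))$ equivalent to $B$ and $\mu(T)$ dominated by (a multiple of) $A$. The delicate points are (i) ensuring the block sizes and the off-diagonal entries are chosen so that the singular values of the whole operator do not exceed the $\mathcal{I}$-controlled majorant $A$ — one must absorb the growth of the off-diagonal part into a bounded rearrangement/multiple of $\mu(A)$, using that $\mathcal{I}$ is an ideal (hence closed under multiplication by bounded operators and under taking $2\mu(\cdot)$, etc.); and (ii) confirming that the algebraic eigenvalue count of the block-diagonal operator genuinely returns the diagonal entries, so that $\lambda(T)$ is $\mu$-equivalent to $B\notin\mathcal{I}$. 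Once $T$ is in hand, the conclusion $\lambda(T)\notin\mathcal{I}$ is immediate, and the final sentence — that $\varphi(\lambda(T))$, and hence the identity~\eqref{general lidskii}, is meaningless for any trace $\varphi$ on $\mathcal{I}$ — follows because $\varphi$ is only defined on $\mathcal{I}$. I would also record, as a preliminary, the easy Lemma~\ref{gs implies cl}-type observation to keep the logical dependencies clean, but the substance of part~\eqref{kalton second} rests entirely on the block-triangular construction described above.
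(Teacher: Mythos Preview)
Your argument for part~\eqref{kalton first} has a fatal gap: the operator $T-\lambda(T)$ is \emph{not} quasinilpotent in general. Eigenvalues do not subtract; there is no relation $\lambda(A-B)=\lambda(A)-\lambda(B)$, even when $A$ and $B$ have identical eigenvalue lists. Recall that $\lambda(T)$ is the diagonal operator with entries $\lambda(k,T)$ in a \emph{fixed} orthonormal basis of $H$, which has nothing to do with any eigenbasis or triangularizing chain for $T$. A $2\times 2$ example already breaks your claim: if $T=\bigl(\begin{smallmatrix}1&1\\0&2\end{smallmatrix}\bigr)$ then $\lambda(T)=\mathrm{diag}(2,1)$ in the standard basis, and $T-\lambda(T)=\bigl(\begin{smallmatrix}-1&1\\0&1\end{smallmatrix}\bigr)$ has eigenvalues $-1,1$, not zero. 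So you cannot feed $T-\lambda(T)$ into Theorem~\ref{main commutator theorem} with $\lambda(T-\lambda(T))=0$.

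The paper's route avoids this by inserting the Ringrose decomposition (Theorem~\ref{ringrose theorem}): write $T=N+Q$ with $N$ normal, $\lambda(N)=\lambda(T)$, and $Q$ genuinely quasinilpotent. One then shows $N\in\mathcal{I}$ (via Weyl, as you did for $\lambda(T)$) and hence $Q=T-N\in\mathcal{I}$. The quasinilpotent piece $Q$ is placed in ${\rm Com}(\mathcal{I})$ exactly as in the proof of Theorem~\ref{main commutator theorem}, using Proposition~\ref{geom main estimate} to get $C\lambda(\Re Q),C\lambda(\Im Q)\in\mathcal{I}$. The remaining step, $\varphi(N)=\varphi(\lambda(N))$, is easy \emph{because $N$ is normal}: there is an isometry $U$ with $N=U\lambda(N)U^*$ and $U^*U=1$, so $N-\lambda(N)=[U\lambda(N),U^*]\in{\rm Com}(\mathcal{I})$. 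In short, the split into a normal and a quasinilpotent part is not optional; it is what makes each half tractable.

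Your plan for part~\eqref{kalton second} is fine and is precisely the content of Lemma~\ref{dk prop11} (Dykema--Kalton): given $y\prec\prec_{\log}x$ one builds a compact $T$ with $\lambda(T)=y$ and $\mu(T)\le x$. The paper simply cites that lemma; your block-triangular sketch is the standard way to prove it, so there is no issue there beyond invoking the reference.
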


In applications, especially in noncommutative geometry, one is mostly interested in positive traces (see e.g. \cite{BF,Connes,Dixmier,Fackcomm2004,LSZ,Takesaki}). We refer the reader to the papers \cite{AS,SSZ} and to the book \cite{LSZ} for the treatment of Lidskii formula for positive traces. For such traces, we are able to present a complete solution of Question \ref{spectral question}: a positive trace on an ideal $\mathcal{I}$ is spectral if and only if it is monotone with respect to the logarithmic submajorization.

\begin{defi} Let $\mathcal{I}$ be an ideal in $\mathcal{L}(H)$ and let $\varphi$ be a positive trace on $\mathcal{I}.$ The trace $\varphi$ is said to be monotone with respect to the logarithmic submajorization if $\varphi(B)\leq\varphi(A)$ for all $0\leq A,B\in\mathcal{I}$ with $B\prec\prec_{\log} A.$
\end{defi}

Later we show (see Lemma \ref{geom envelope lemma}) that for every ideal $\mathcal{I},$ there exists the least ideal\footnote{LE stands for the \lq\lq logarithmic envelope\rq\rq.} $LE(\mathcal{I})$ which contains $\mathcal{I}$ and which is closed with respect to the logarithmic submajorization.

Our second main result can be read as follows.

\begin{thm}\label{main theorem} Let $\mathcal{I}$ be an ideal in $\mathcal{L}(H)$ and let $\varphi$ be a positive trace on $\mathcal{I}.$
\begin{enumerate}[(a)]
\item\label{main first} If $\varphi(T)=\varphi(\lambda(T))$ for all $T\in\mathcal{I}$ with $\lambda(T)\in\mathcal{I},$ then the trace $\varphi$ is monotone with respect to the logarithmic submajorization.
\item\label{main second} If $\varphi$ is monotone with respect to the logarithmic submajorization, then $\varphi$ extends to a positive trace on $LE(\mathcal{I}).$ In particular, $\varphi(T)=\varphi(\lambda(T))$ for all $T\in\mathcal{I}.$
\end{enumerate}
\end{thm}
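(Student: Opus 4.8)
\textbf{Proof proposal for Theorem \ref{main theorem}.}

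The plan is to deduce everything from the structural results already in hand, most notably Theorem \ref{main commutator theorem} and the Lidskii-type statement of Theorem \ref{extended kalton}\eqref{kalton first}. Part \eqref{main first} should be the easier direction. Suppose $0\le A,B\in\mathcal{I}$ with $B\prec\prec_{\log}A$. First I would reduce to the case where $A$ has infinite rank and all singular values strictly positive (otherwise $B$ is finite rank and lies in the span of finite-rank projections, where positivity of $\varphi$ and the hypothesis force $\varphi(B)\le\varphi(A)$ directly). The key observation is that $\varphi$, under the hypothesis of part \eqref{main first}, vanishes on every $T\in\mathcal{I}$ with $\lambda(T)=0$, i.e.\ on every quasinilpotent operator in $\mathcal{I}$; more generally $\varphi$ is spectral on the subideal $\{T\in\mathcal{I}:\lambda(T)\in\mathcal{I}\}$, which contains all normal operators of $\mathcal{I}$. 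Since $A,B\ge0$ are normal, $\varphi(A)=\varphi(\mu(A))$ and $\varphi(B)=\varphi(\mu(B))$, so it suffices to prove $\varphi(\mu(B))\le\varphi(\mu(A))$ for the diagonal operators $\mu(A),\mu(B)\in\mathcal{I}$ with $\mu(B)\prec\prec_{\log}\mu(A)$. For this I would invoke the standard fact (Kalton, Dykema--Kalton) that $\mu(B)\prec\prec_{\log}\mu(A)$ implies $\mu(B)$ lies in the ideal generated by $\mu(A)$ together with a decomposition $\mu(B)=D+R$ where $0\le D$, $D\prec\prec\mu(A)$ (ordinary submajorization, hence $D$ in the closed convex hull of operators $\mathrm{Tr}$-dominated by $\mu(A)$, giving $\varphi(D)\le\varphi(\mu(A))$ by positivity and the Hardy--Littlewood--Pólya/Calkin description), while the remainder $R$ is a positive operator with $C\lambda(R)\in\mathcal{I}$, i.e.\ $R\in\mathrm{Com}(\mathcal{I})$ by Theorem \ref{normal commutator}, so $\varphi(R)=0$. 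The main technical point here is producing this submajorization-plus-commutator splitting at the level of positive sequences from log-submajorization; this is essentially the content of the geometric-stability circle of ideas and I would isolate it as a lemma on sequences.

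For part \eqref{main second}, assume $\varphi$ is monotone with respect to $\prec\prec_{\log}$. The goal is to extend $\varphi$ to a positive trace on $LE(\mathcal{I})$, where $LE(\mathcal{I})$ is the logarithmic envelope furnished by Lemma \ref{geom envelope lemma}. I would first describe $LE(\mathcal{I})_+$ concretely: $0\le S\in LE(\mathcal{I})$ iff $\mu(S)\prec\prec_{\log}\mu(A)$ for some $0\le A\in\mathcal{I}$ (this should be exactly how the envelope is constructed in Lemma \ref{geom envelope lemma}). On such $S$ define $\widetilde\varphi(S):=\sup\{\varphi(B):0\le B\in\mathcal{I},\ B\prec\prec_{\log}S\}$. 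Monotonicity of $\varphi$ guarantees that on $\mathcal{I}_+$ this supremum equals $\varphi$ itself, so $\widetilde\varphi$ extends $\varphi$. The work is then to check that $\widetilde\varphi$ is additive and positively homogeneous on $LE(\mathcal{I})_+$ and that its linear extension is a trace. Positive homogeneity is immediate. For additivity, the inequality $\widetilde\varphi(S_1+S_2)\ge\widetilde\varphi(S_1)+\widetilde\varphi(S_2)$ follows by adding approximants; the reverse inequality is the delicate one and I would get it from the submajorization inequality $\mu(B)\prec\prec_{\log}\mu(S_1)\cdot\mu(S_2)$-type estimates together with the fact that any $0\le B\le S_1+S_2$ with $B\in\mathcal{I}$ can be split as $B=B_1+B_2$ with $0\le B_i\in\mathcal{I}$ and $B_i$ appropriately log-submajorized by $S_i$ (a Hardy--Littlewood-type rearrangement on the eigenvalue sequences, using that $\mathcal{I}$ is an ideal so it absorbs such pieces). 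Once $\widetilde\varphi$ is additive on the positive cone it extends by linearity to a positive linear functional on $LE(\mathcal{I})$.

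It remains to verify the trace property of the extension. Here I would use that a positive linear functional $\psi$ on an ideal $\mathcal{J}$ closed under $\prec\prec_{\log}$ is automatically a trace iff it is unitarily invariant, and unitary invariance of $\widetilde\varphi$ is inherited from that of $\varphi$ together with the fact that $U^*SU$ and $S$ have the same singular values, hence $B\prec\prec_{\log}S\iff U^*BU\prec\prec_{\log}U^*SU$; alternatively, invoke that on an ideal closed under logarithmic submajorization the commutator subspace is spanned by $\{S-V^*SV\}$ over isometries $V$, which by Theorem \ref{main commutator theorem} is exactly the kernel of all traces, so it suffices to see $\widetilde\varphi$ annihilates positive operators $S\in LE(\mathcal{I})$ with $C\lambda(S)\in LE(\mathcal{I})$ — and for those, every $0\le B\in\mathcal{I}$ with $B\prec\prec_{\log}S$ also has $C\lambda(B)\in\mathcal{I}$, hence $\varphi(B)=0$ by Theorem \ref{normal commutator}, so $\widetilde\varphi(S)=0$. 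Finally, the last assertion $\varphi(T)=\varphi(\lambda(T))$ for all $T\in\mathcal{I}$ follows by applying Theorem \ref{extended kalton}\eqref{kalton first} to the extended trace $\widetilde\varphi$ on $LE(\mathcal{I})$, since $\lambda(T)\in LE(\mathcal{I})$ (indeed $\mu(\lambda(T))\prec\prec_{\log}\mu(T)$ by Weyl's inequality) and $\widetilde\varphi$ restricts to $\varphi$ on $\mathcal{I}$. The main obstacle I anticipate is the additivity of $\widetilde\varphi$ on $LE(\mathcal{I})_+$, specifically the eigenvalue-sequence splitting lemma that lets one decompose a positive operator dominated by $S_1+S_2$ into log-submajorized pieces; everything else is bookkeeping around Theorems \ref{normal commutator}, \ref{main commutator theorem}, and \ref{extended kalton}.
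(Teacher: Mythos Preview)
Your plan for part \eqref{main first} has a genuine gap. You propose to split $\mu(B)=D+R$ with $D\prec\prec\mu(A)$ and $R\in{\rm Com}(\mathcal{I})$, and then conclude $\varphi(D)\le\varphi(\mu(A))$ from Hardy--Littlewood submajorization via ``the closed convex hull of operators $\mathrm{Tr}$-dominated by $\mu(A)$''. But positive traces on general ideals are \emph{not} monotone with respect to Hardy--Littlewood submajorization; this is precisely the phenomenon flagged before Corollary~\ref{cl cor}. The convex hull of $\{z\ge0:\mu(z)\le\mu(A)\}$ is governed by \emph{uniform} submajorization (Theorem~\ref{ks majorization theorem}), not by $\prec\prec$, so the implication $D\prec\prec\mu(A)\Rightarrow\varphi(D)\le\varphi(A)$ fails in general. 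Moreover the splitting $\mu(B)=D+R$ you need is not a standard consequence of $B\prec\prec_{\log}A$ and would itself require proof. The paper avoids all of this: it invokes Lemma~\ref{dk prop11} (Dykema--Kalton) to build an operator $T$ with $\lambda(T)=\mu(B)$ and $\mu(T)\le\mu(A)$; then spectrality gives $\varphi(B)=\varphi(\lambda(T))=\varphi(T)=\varphi(\Re T)$, and Lemma~\ref{abs lemma} together with Lemma~\ref{trace monotone} yields $\varphi(\Re T)\le\varphi(|T|)\le\varphi(A)$. No decomposition of $\mu(B)$ and no Hardy--Littlewood monotonicity is used.

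For part \eqref{main second} your outline is in the right spirit but the execution diverges from the paper and introduces unnecessary difficulties. You define the extension by $\widetilde\varphi(S)=\sup\{\varphi(B):0\le B\in\mathcal{I},\ B\prec\prec_{\log}S\}$ and then try to prove additivity by splitting $0\le B\le S_1+S_2$ into pieces $B_i$ with $B_i\prec\prec_{\log}S_i$; no such splitting lemma is available, and your argument for the trace property (that $B\prec\prec_{\log}S$ with $C\lambda(S)\in LE(\mathcal{I})$ forces $C\lambda(B)\in\mathcal{I}$) is unjustified. The paper instead uses the simpler extension $\varphi(S)=\sup\{\varphi(B):0\le B\le S,\ B\in\mathcal{I}\}$ from Definition~\ref{extension def}; monotonicity of $\varphi$ under $\prec\prec_{\log}$ is used only once, to show this supremum is finite on $LE(\mathcal{I})_+$. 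Additivity then comes from the elementary direct-sum identity (Lemma~\ref{direct sum}) combined with the uniform-submajorization inequalities \eqref{uniform maj sum} linking $A_1+A_2$ and $A_1\oplus A_2$ (Proposition~\ref{extension lemma}), and the trace property follows immediately from Lemma~\ref{trace monotone}. The final appeal to Theorem~\ref{extended kalton}\eqref{kalton first} is as you describe.
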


Theorem \ref{main theorem} leads to the following surprising corollary. While a positive trace may not necessarily be monotone with respect to the Hardy-Littlewood submajorization (see \cite{KScanada,LSZ,SZcrelle}), it is always monotone with respect to the logarithmic submajorization (on the ideals closed with respect to the latter).

\begin{cor}\label{cl cor} If an ideal $\mathcal{I}$ is closed with respect to the logarithmic submajorization (e.g. in quasi-Banach ones), then every positive trace on $\mathcal{I}$ is monotone with respect to the logarithmic submajorization.
\end{cor}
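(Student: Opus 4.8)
The plan is to obtain Corollary \ref{cl cor} directly from part (a) of Theorem \ref{extended kalton} and part (a) of Theorem \ref{main theorem}, after first recording one elementary self-improvement property: if $\mathcal{I}$ is closed with respect to the logarithmic submajorization, then $\lambda(T)\in\mathcal{I}$ for \emph{every} $T\in\mathcal{I},$ and not merely for those $T$ for which this is assumed a priori.

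First I would prove this property. Regard the eigenvalue sequence $\lambda(T)$ as a diagonal operator; since $\{|\lambda(n,T)|\}_{n\geq0}$ is already non-increasing, its singular value sequence is $\mu(n,\lambda(T))=|\lambda(n,T)|.$ Weyl's inequality (see \cite{Weyl_logmaj}, or Chapter 3 in \cite{Simon}) then gives $\prod_{k=0}^n|\lambda(k,T)|\leq\prod_{k=0}^n\mu(k,T)$ for all $n\geq0,$ that is, $\lambda(T)\prec\prec_{\log}T.$ As $T\in\mathcal{I}$ and $\mathcal{I}$ is closed with respect to the logarithmic submajorization, it follows that $\lambda(T)\in\mathcal{I}.$

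Next, since $\mathcal{I}$ is closed with respect to the logarithmic submajorization, part (a) of Theorem \ref{extended kalton} yields $\varphi(T)=\varphi(\lambda(T))$ for all $T\in\mathcal{I}.$ In view of the previous step, the set of $T\in\mathcal{I}$ with $\lambda(T)\in\mathcal{I}$ is all of $\mathcal{I},$ so in particular $\varphi(T)=\varphi(\lambda(T))$ holds for every $T\in\mathcal{I}$ with $\lambda(T)\in\mathcal{I}$ --- which is precisely the hypothesis of part (a) of Theorem \ref{main theorem}. Applying that theorem, $\varphi$ is monotone with respect to the logarithmic submajorization, as claimed. The parenthetical assertion about quasi-Banach ideals is justified separately: quasi-Banach ideals are closed with respect to the logarithmic submajorization, e.g. because they are geometrically stable and Lemma \ref{gs implies cl} then applies.

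Since all of the substantive content has already been established in the proofs of Theorems \ref{extended kalton} and \ref{main theorem}, I do not anticipate a genuine obstacle here. The only point needing care is making sure the hypothesis of part (a) of Theorem \ref{main theorem} is actually in force, and this is exactly why the first step (the automatic membership $\lambda(T)\in\mathcal{I}$) must be carried out, rather than trying to quote the two theorems in isolation.
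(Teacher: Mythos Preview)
Your argument is correct and follows the route the paper intends: the corollary is stated without proof precisely because it is immediate from Theorem \ref{extended kalton}\eqref{kalton first} together with Theorem \ref{main theorem}\eqref{main first}. Your first step (showing $\lambda(T)\in\mathcal{I}$ via Weyl's inequality) is harmless but redundant: once Theorem \ref{extended kalton}\eqref{kalton first} gives $\varphi(T)=\varphi(\lambda(T))$ for \emph{all} $T\in\mathcal{I},$ this trivially implies the same equality on the subset $\{T\in\mathcal{I}:\lambda(T)\in\mathcal{I}\},$ so the hypothesis of Theorem \ref{main theorem}\eqref{main first} is already in force without ever checking that this subset is everything.
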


We should note, however, that the converse assertion to Corollary \ref{cl cor} is false. In Theorem \ref{dk example} we present a principal ideal which is not closed with respect to the logarithmic submajorization and such that every positive trace on it is monotone with respect to the logarithmic submajorization (and, therefore, spectral by Theorem \ref{main theorem}).

\section{Preliminaries}

\subsection{Eigenvalues and singular values}

For all compact operators $A,B\in\mathcal{L}(H),$ we have (see e.g. Corollary 2.3.16 in \cite{LSZ})
\begin{equation}\label{mu sum}
\mu(A+B)\leq\sigma_2(\mu(A)+\mu(B)).
\end{equation}
Here, the dilation operator $\sigma_n:l_{\infty}\to l_{\infty}$ is defined as follows.
$$\sigma_nx=(\underbrace{x(0),\cdots,x(0)}_{\mbox{$n$ times}},\underbrace{x(1),\cdots,x(1)}_{\mbox{$n$ times}},\cdots),\quad x=(x(0),x(1),\cdots)\in l_{\infty}.$$

The following inequality relating eigenvalues with singular values is due to Weyl (see Lemma II.3.3. in \cite{GohKr1} or Weyl's original paper \cite{Weyl_logmaj}).
\begin{thm}\label{weyl theorem} For every compact operator $T,$ we have
\begin{equation}\label{lambda logmaj mu}
\prod_{k=0}^n|\lambda(k,T)|\leq\prod_{k=0}^n\mu(k,T),\quad n\geq0.
\end{equation}
Equivalently, $\lambda(T)\prec\prec_{\log}\mu(T).$
\end{thm}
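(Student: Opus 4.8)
The plan is to prove the multiplicative inequality \eqref{lambda logmaj mu} by passing to antisymmetric tensor powers, thereby reducing it to the elementary fact that the spectral radius of an operator does not exceed its norm. Fix $n\geq0$; it suffices to establish $\prod_{k=0}^n|\lambda(k,T)|\leq\prod_{k=0}^n\mu(k,T)$, since once this holds for every $n$ the reformulation $\lambda(T)\prec\prec_{\log}\mu(T)$ is immediate: the diagonal operator $\lambda(T)$ has singular values $\mu(k,\lambda(T))=|\lambda(k,T)|$ by the convention on the ordering of eigenvalues.

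Consider the $(n+1)$-fold antisymmetric tensor power $\Lambda^{n+1}T$ acting on $\Lambda^{n+1}H$. I would record two classical facts. First, $\Lambda^{n+1}T$ is again compact and
$$\|\Lambda^{n+1}T\|=\mu(0,\Lambda^{n+1}T)=\prod_{k=0}^n\mu(k,T);$$
this follows, for instance, from the description of $\prod_{k=0}^n\mu(k,T)$ as the supremum of the $(n+1)$-dimensional volumes of the parallelepipeds spanned by $Tx_0,\dots,Tx_n$ over orthonormal systems $x_0,\dots,x_n$, together with the identification of such a volume with the norm of the wedge $Tx_0\wedge\cdots\wedge Tx_n$. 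Second, the non-zero eigenvalues of $\Lambda^{n+1}T$, counted with algebraic multiplicity, are exactly the products $\lambda(i_0,T)\lambda(i_1,T)\cdots\lambda(i_n,T)$ over all choices $0\leq i_0<i_1<\cdots<i_n$; this is linear algebra in finite dimensions and is transferred to compact operators via the Riesz decomposition of $T$ on a finite-dimensional invariant subspace carrying its top eigenvalues. Since $\{|\lambda(k,T)|\}_{k\geq0}$ is non-increasing, the product of largest modulus among these is $\prod_{k=0}^n|\lambda(k,T)|$, whence $|\lambda(0,\Lambda^{n+1}T)|=\prod_{k=0}^n|\lambda(k,T)|$.

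It remains to combine these facts with the inequality $|\lambda(0,A)|\leq r(A)\leq\|A\|$, valid for every bounded operator $A$ (here $r$ denotes the spectral radius), applied to $A=\Lambda^{n+1}T$:
$$\prod_{k=0}^n|\lambda(k,T)|=|\lambda(0,\Lambda^{n+1}T)|\leq\|\Lambda^{n+1}T\|=\prod_{k=0}^n\mu(k,T),$$
which is the desired estimate.

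The routine parts are the reduction to fixed $n$ and the final one-line chain of inequalities. The only point requiring genuine care beyond the finite-dimensional picture is the verification of the two displayed properties of $\Lambda^{n+1}T$ for a general compact, not necessarily finite-rank, operator: that its top singular value equals $\prod_{k=0}^n\mu(k,T)$ and that its eigenvalue of largest modulus equals $\prod_{k=0}^n\lambda(k,T)$. Both are classical --- this is essentially the content of Lemma II.3.3 in \cite{GohKr1} --- and the cleanest rigorous route to the eigenvalue statement is to truncate, working on the finite-dimensional $T$-invariant subspace spanned by the generalized eigenspaces associated with the $N$ largest eigenvalues for $N\gg n$, and to let $N\to\infty$, observing that the leading entries of the eigenvalue list of $\Lambda^{n+1}T$ stabilize.
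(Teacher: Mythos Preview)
The paper does not supply its own proof of this statement; it is quoted as a classical result with a reference to Weyl's original paper and to Lemma II.3.3 in \cite{GohKr1}. Your argument via the antisymmetric tensor power $\Lambda^{n+1}T$ is correct and is exactly the standard proof one finds in those references, so there is nothing to compare.
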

The following assertion due to Dykema and Kalton \cite{DykemaKalton1998} may be viewed as a converse to Theorem \ref{weyl theorem}.
\begin{lem}\label{dk prop11} If $x=\mu(x)\in c_0$ and $|y|=\mu(y)\in c_0$ are such that $y\prec\prec_{\log}x,$ then there exists a compact operator $T\in\mathcal{L}(H)$ such that $\lambda(T)=y$ and $\mu(T)\leq x.$
\end{lem}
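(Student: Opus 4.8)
The plan is to reduce the statement to a finite-dimensional realization problem and then build $T$ from finite blocks. The core fact I would establish first is: if $\lambda_1,\dots,\lambda_n\in\mathbb{C}$ with $|\lambda_1|\ge\cdots\ge|\lambda_n|$ and reals $s_1\ge\cdots\ge s_n\ge0$ satisfy $\prod_{k=1}^{j}|\lambda_k|\le\prod_{k=1}^{j}s_k$ for every $1\le j\le n$, then there is an $n\times n$ matrix with eigenvalues $\lambda_1,\dots,\lambda_n$ and singular values $\le(s_1,\dots,s_n)$ coordinatewise. Since $\prod_k\mu(k,A)=|\prod_k\lambda(k,A)|$ for a square matrix $A$, one first passes to the pointwise smaller sequence $s_1,\dots,s_{n-1},s_n\prod_{k}|\lambda_k|/\prod_{k}s_k$, which still dominates $(|\lambda_k|)$ in partial products but now with equality of the full product, and then invokes the classical realization theorem (essentially due to A.~Horn) asserting that a square matrix with prescribed eigenvalue multiset and singular value sequence exists exactly under these conditions; alternatively the core can be proved by induction on $n$, placing a suitable eigenvalue in a corner and using the interlacing of singular values under deletion of a row. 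Granting the core, the case where $y$ has finite support is immediate: if $y(k)\ne0$ exactly for $k<m$, then $y\prec\prec_{\log}x$ says precisely $\prod_{k=0}^{j}|y(k)|\le\prod_{k=0}^{j}x(k)$ for $j\le m-1$, so the core with $n=m$, $\lambda_{k+1}=y(k)$, $s_{k+1}=x(k)$ gives an $m\times m$ matrix which, extended by $0$ on the orthogonal complement, is the desired $T$.

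If $y$ has infinite support then $|y(k)|>0$, hence $x(k)>0$, for all $k$, and a plain block-diagonal construction does not suffice: the log-submajorization need not localize to blocks --- one can have $|y(k)|>x(k)$ for all large $k$ while $\prod_{k=0}^{n}|y(k)|\le\prod_{k=0}^{n}x(k)$ for all $n$ --- and the largest singular value of any block must dominate its spectral radius. Instead I would obtain $T$ as a limit of the finite upper triangular matrices $T_N$ produced by the finite-support case applied to the truncations $(y(0),\dots,y(N-1))$ against $(x(0),\dots,x(N-1))$, carrying out the finite-dimensional step so that the $T_N$ are nested, each being the top-left corner of the next. One then verifies that the entries of the resulting infinite matrix decay, since $\mu(T_N)\le x\in c_0$ uniformly, so $T$ is compact; that $\mu(T)\le x$ by semicontinuity of singular values under $T_N\to T$; and that, $T$ being triangular with respect to the fixed orthonormal basis with diagonal $y$, its eigenvalue sequence is its diagonal, so $\lambda(T)=y$ (alternatively, use continuity of the eigenvalue sequence of compact operators together with $\lambda(T_N)\to y$).

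The delicate part is precisely this infinite assembly: since the naive direct-sum strategy provably fails, one must run the finite-dimensional construction coherently enough to produce a genuinely convergent sequence of finite upper triangular matrices whose limit is compact and retains all of the eigenvalue data, and controlling the interaction between the different scales --- verifying that no eigenvalue is lost in the limit --- is the main obstacle.
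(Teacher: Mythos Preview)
The paper does not prove this lemma: it is stated with the attribution ``due to Dykema and Kalton \cite{DykemaKalton1998}'' and no argument is given. So there is no in-paper proof to compare against; your proposal should be judged on its own merits as a reconstruction of the Dykema--Kalton argument.

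Your finite-dimensional core is correct. Shrinking $s_n$ to force equality of the full products and then invoking Horn's realization theorem is exactly the right move, and the finite-support case of the lemma follows immediately.

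The gap is the infinite-support step, and you have essentially said so yourself. You claim the finite triangular matrices $T_N$ can be built \emph{nested}, each the top-left corner of the next, but this is asserted rather than shown. Passing from $T_N$ to $T_{N+1}$ by appending a row and a column can only increase singular values: Cauchy interlacing applied to $T_{N+1}^*T_{N+1}$ (whose top-left $N\times N$ block is $T_N^*T_N$) gives $\sigma_k(T_{N+1})\ge\sigma_k(T_N)$. So if $T_N$ already saturates some bound $\sigma_k(T_N)=x(k-1)$, equality is forced at stage $N+1$ as well, and you must still accommodate the new diagonal entry $y(N)$, which --- as your own discussion shows --- may satisfy $|y(N)|>x(N)$. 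Making the induction run requires identifying and maintaining a quantitative slack in $\mu(T_N)$ at every stage; that invariant is precisely the content one has to supply, and your proposal does not. Your closing paragraph correctly names this as ``the main obstacle'', but naming it is not resolving it.

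Two smaller points. First, the claim that a compact upper-triangular operator has eigenvalue sequence equal to its diagonal is true but is itself a theorem (in the Ringrose circle of ideas the paper uses elsewhere); it should be cited, not asserted. Second, the parenthetical fallback ``use continuity of the eigenvalue sequence of compact operators together with $\lambda(T_N)\to y$'' is unsafe: eigenvalue sequences of compact operators are not continuous under the mode of convergence you have (entrywise/strong), and even under norm convergence one only has semicontinuity statements, so this alternative would need its own justification.
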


A class of all positive traces on an ideal $\mathcal{I}$ admits an equivalent description in terms of singular value sequences\footnote{A variant of Lemma \ref{trace monotone} for symmetrically normed ideals can be found in Lemma 2.7.4 of \cite{LSZ}}.
\begin{lem}\label{trace monotone} Let $\mathcal{I}$ be an ideal in $\mathcal{L}(H)$ and let $\varphi:\mathcal{I}\to\mathbb{C}$ be a linear mapping. The functional $\varphi$ is a positive trace if and only if $\varphi(B)\leq\varphi(A)$ for all $0\leq A,B\in\mathcal{I}$ such that $\mu(B)\leq\mu(A).$
\end{lem}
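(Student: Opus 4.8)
The plan is to prove the two directions separately, with the "if" direction being nearly immediate and the "only if" direction requiring the standard toolkit of the theory: polar decomposition, the averaging trick via unitaries, and approximation of operators dominated in singular values by "sampling" arguments.

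For the easy direction, suppose $\varphi(B)\leq\varphi(A)$ whenever $0\leq A,B\in\mathcal{I}$ with $\mu(B)\leq\mu(A)$. Positivity of $\varphi$ follows by taking $B=0$ (or by noting that if $A\geq0$ then $\mu(A)\leq\mu(A)$ trivially forces $\varphi(A)=\varphi(A)$, so instead take $0\leq B\leq A$, which gives $\mu(B)\leq\mu(A)$ and hence $\varphi(B)\leq\varphi(A)$; in particular $\varphi(A)\geq\varphi(0)=0$). For the trace property, it suffices to show $\varphi(UAU^*)=\varphi(A)$ for every $A\in\mathcal{I}$ and every unitary $U$, since the commutator property $\varphi(AB)=\varphi(BA)$ for a linear functional on an ideal is equivalent to unitary invariance (write $B$ as a linear combination of unitaries, or use that $\mathrm{Com}(\mathcal{I})$ is spanned by $A-UAU^*$; alternatively this reduction is standard, e.g. Lemma 2.7.4 in \cite{LSZ}). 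Now for $A\geq0$, $\mu(UAU^*)=\mu(A)$, so the hypothesis gives both $\varphi(UAU^*)\leq\varphi(A)$ and $\varphi(A)=\varphi(U^*(UAU^*)U)\leq\varphi(UAU^*)$, hence equality; the general case follows by decomposing $A$ into four positive pieces via the Cartesian and Jordan decompositions and using linearity.

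For the hard direction, assume $\varphi$ is a positive trace; we must show $\mu(B)\leq\mu(A)$ with $0\leq A,B$ implies $\varphi(B)\leq\varphi(A)$. The main obstacle is that $\mu(B)\leq\mu(A)$ is a condition only on singular value sequences and does not by itself give an operator inequality; we need to manufacture one. The standard route: since $\mu(B)\leq\mu(A)$, the diagonal operator $\mu(B)$ (in the fixed basis) satisfies $0\leq\mu(B)\leq\mu(A)$ as operators, so $\varphi(\mu(B))\leq\varphi(\mu(A))$ by positivity of $\varphi$. It therefore remains to show $\varphi(B)=\varphi(\mu(B))$ and $\varphi(A)=\varphi(\mu(A))$, i.e. that $\varphi$ applied to a positive operator equals $\varphi$ applied to its singular value sequence viewed diagonally. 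This is where unitary invariance enters again: for $A\geq0$ compact, the spectral theorem gives a partial isometry (extendable to a unitary) conjugating $A$ to the diagonal operator $\mu(A)$ on the closure of its range, and on the kernel one extends arbitrarily; hence $\varphi(A)=\varphi(\mu(A))$. Strictly speaking one must be careful when the kernel of $A$ is finite-dimensional versus infinite-dimensional, but by padding the basis appropriately (the Hilbert space is separable and infinite-dimensional) one produces a genuine unitary $U$ with $UAU^*=\mu(A)$, and then $\varphi(A)=\varphi(UAU^*)=\varphi(\mu(A))$ using the trace property established. Chaining the three facts $\varphi(B)=\varphi(\mu(B))$, $\varphi(\mu(B))\leq\varphi(\mu(A))$, $\varphi(\mu(A))=\varphi(A)$ completes the proof. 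I expect the only genuinely delicate point to be the bookkeeping in the unitary conjugation $A\mapsto\mu(A)$ when eigenspaces and kernels have various dimensions, but this is routine given separability.
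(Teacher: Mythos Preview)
Your overall strategy matches the paper's exactly: for the forward direction you reduce to showing $\varphi(A)=\varphi(\mu(A))$ for $A\geq0$, then use positivity on the diagonal inequality $\mu(B)\leq\mu(A)$; for the converse you deduce unitary invariance from the singular-value hypothesis and upgrade to the full trace identity via the four-unitaries decomposition. This is precisely the paper's route.

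There is, however, one concrete error in your execution of the forward direction. You claim that for any positive compact $A$ one can produce a \emph{unitary} $U$ with $UAU^*=\mu(A)$, dismissing the kernel issue as bookkeeping. This is false: if $A$ has infinite rank but a nontrivial finite-dimensional kernel, then every $\mu(k,A)$ is strictly positive, so the diagonal operator $\mu(A)$ has trivial kernel, while $A$ does not; the two operators are not unitarily equivalent and no amount of ``padding the basis'' repairs this. The paper avoids this trap by using an \emph{isometry} rather than a unitary: one always has an isometry $U$ with $U^*U=1$ and $A=U\mu(A)U^*$ (send the fixed basis vectors $f_k$ to an orthonormal eigenbasis $e_k$ of $A$ on $\overline{\mathrm{ran}(A)}$), and then
\[
A-\mu(A)=U\mu(A)\cdot U^*-U^*\cdot U\mu(A)\in{\rm Com}(\mathcal{I}),
\]
which gives $\varphi(A)=\varphi(\mu(A))$ directly from the trace property $\varphi(XY)=\varphi(YX)$ rather than from unitary invariance alone. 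Once you replace your unitary with this isometry, your argument goes through verbatim and coincides with the paper's.
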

\begin{proof} Let $\varphi$ be a positive trace. First, we prove that $\varphi(A)=\varphi(\mu(A))$ for every positive $A\in\mathcal{I}.$ Indeed, there exists an isometry $U\in\mathcal{L}(H)$ such that $A=U\mu(A)U^*$ and $U^*U=1.$ Therefore,
$$A-\mu(A)=U\mu(A)\cdot U^*-U^*\cdot U\mu(A)\in{\rm Com}(\mathcal{I}).$$
In particular, we have $\varphi(A)=\varphi(\mu(A)).$

If now $A,B\in\mathcal{I}$ are positive operators such that $\mu(B)\leq\mu(A),$ then
$$\varphi(B)=\varphi(\mu(B))\leq\varphi(\mu(A))=\varphi(A).$$
This proves necessity.

We now prove sufficiency. Let $0\leq A\in\mathcal{I}$ and let $U\in\mathcal{L}(H)$ be a unitary operator. We have $\mu(U^{-1}AU)=\mu(A),$ which yields (see the assumption) $\varphi(U^{-1}AU)=\varphi(A).$ By linearity, the latter equality holds for every $A\in\mathcal{I}$ (i.e., without assumption of positivity). Substituting $UA$ instead of $A,$ we obtain $\varphi(AU)=\varphi(UA).$ Since every operator $B\in\mathcal{L}(H)$ is a linear combination of $4$ unitaries, it follows that $\varphi(AB)=\varphi(BA)$ for every $A\in\mathcal{I}$ and every $B\in\mathcal{L}(H).$ Thus, $\varphi$ is a trace. The assertion that $\varphi\geq0$ is immediate.
\end{proof}

The following lemma (used only in Section \ref{Examples} below) demonstrates that every positive trace on the \lq\lq non-summable\rq\rq~ideals is singular.

\begin{lem}\label{singular} Let $\mathcal{I}\not\subset\mathcal{L}_1$ and let $\varphi$ be a positive trace on $\mathcal{I}.$
\begin{enumerate}[(a)]
\item\label{singa} The trace $\varphi$ is singular, that is $\varphi$ vanishes on the finite rank operators.
\item\label{singb} If $A,B\in\mathcal{I}$ are such that $\mu(k,B)=o(\mu(k,A))$ as $k\to\infty,$ then $\varphi(B)=0.$
\end{enumerate}
\end{lem}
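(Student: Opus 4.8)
\textbf{Proof proposal for Lemma \ref{singular}.}

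The plan is to derive both parts from the two key facts already available: Lemma \ref{trace monotone} (a positive trace is monotone with respect to $\mu(B)\le\mu(A)$) and the hypothesis $\mathcal{I}\not\subset\mathcal{L}_1$, which guarantees an operator in $\mathcal{I}$ whose singular value sequence is not summable. Throughout I may assume operators are positive, since by Lemma \ref{trace monotone} we have $\varphi(A)=\varphi(\mu(A))$ and it suffices to work with the diagonal operators $\mu(A)\in l_\infty$.

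For part \eqref{singb}, fix positive $A,B\in\mathcal{I}$ with $\mu(k,B)=o(\mu(k,A))$. Given $\varepsilon>0$, there is $N$ with $\mu(k,B)\le\varepsilon\,\mu(k,A)$ for all $k\ge N$. Split $B=B_1+B_2$ where $B_1$ is the truncation of $\mu(B)$ to the first $N$ coordinates (a finite-rank operator) and $B_2$ is the tail; then $\mu(B_2)\le\varepsilon\mu(A)$, so by Lemma \ref{trace monotone} applied to $B_2$ and $\varepsilon A$ (both positive, both in $\mathcal{I}$) we get $\varphi(B_2)\le\varepsilon\varphi(A)$, and symmetrically $\varphi(B_2)\ge-\varepsilon\varphi(A)$ after passing to real/imaginary parts; hence $\varphi(B)=\varphi(B_1)+\varphi(B_2)$ with the second summand controlled by $\varepsilon$. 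It remains to see $\varphi$ vanishes on finite-rank operators — which is exactly part \eqref{singa} — so parts \eqref{singa} and \eqref{singb} should be proved in that order, and then \eqref{singb} follows by letting $\varepsilon\to0$.

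For part \eqref{singa}, since the rank-one projections are all unitarily equivalent and $\varphi$ is a trace, it suffices to show $\varphi(p)=0$ for a single rank-one projection $p$. Pick $0\le A\in\mathcal{I}$ with $\mu(A)=\{\mu(k,A)\}_{k\ge0}\notin l_1$. The idea is to use the non-summability to build, for each $n$, a positive operator in $\mathcal{I}$ dominated (in the $\mu$-order) by $A$ but whose trace is at least $n\varphi(p)$: concretely, choose indices so that consecutive blocks of $\mu(A)$ each have sum $\ge 1$, and compare a rank-$n$ projection (sum of $n$ orthogonal rank-one projections, each equivalent to $p$) against a suitable rearrangement/selection of coordinates of $\mu(A)$. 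By Lemma \ref{trace monotone}, $n\varphi(p)\le\varphi(A)$ for all $n$, forcing $\varphi(p)\le0$; positivity of $\varphi$ gives $\varphi(p)=0$. The main obstacle is the bookkeeping in this last step: one must produce, for a rank-$n$ projection $q_n\le 1$, a positive operator $C_n\in\mathcal{I}$ with $q_n$ dominated by $C_n$ in singular values and $\mu(C_n)\le\mu(A)$ (so $\varphi(C_n)\le\varphi(A)$), which is where the failure of summability of $\mu(A)$ is genuinely used — since $\mu(k,A)\to0$, no single coordinate dominates a projection, and one really needs infinitely many coordinates summing to $\ge n$. I expect this is handled by grouping: if $\sum_k\mu(k,A)=\infty$ then one can select finite index sets $F_1,F_2,\dots$ with $\sum_{k\in F_j}\mu(k,A)\ge1$, and the operator $C_n$ built from the first $n$ such groups does the job.
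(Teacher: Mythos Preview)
Your argument for part \eqref{singb} is essentially the paper's: reduce to $\mu(B)$, split off a finite-rank head, bound the tail by $\varepsilon\mu(A)$ via Lemma \ref{trace monotone}, and then invoke part \eqref{singa}.

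For part \eqref{singa}, however, there is a genuine gap. You propose to find, for each $n$, an operator $C_n$ with $\mu(q_n)\le\mu(C_n)\le\mu(A)$, where $q_n$ is a rank-$n$ projection. But $\mu(k,q_n)=1$ for $k<n$, while $\mu(k,A)\to0$ because $A$ is compact; hence for $n$ large the chain $1=\mu(k,q_n)\le\mu(k,C_n)\le\mu(k,A)$ is impossible at $k=n-1$. Your grouping into blocks $F_j$ with $\sum_{k\in F_j}\mu(k,A)\ge1$ does not repair this: block \emph{sums} exceeding $1$ do not produce individual singular values exceeding $1$, and Lemma \ref{trace monotone} demands a pointwise comparison $\mu(B)\le\mu(A)$, not a submajorization. (Nor would Hardy--Littlewood submajorization help, since positive traces need not respect it; cf.\ the paper's remark after Corollary \ref{cl cor}.)

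The paper sidesteps the obstacle by never comparing a projection to $A$. It compares a \emph{truncation} of $A$ to $A$ itself: writing $A=\sum_{k\ge0}\mu(k,A)P_k$ with mutually orthogonal rank-one projections $P_k$, the partial sum $S_n=\sum_{k=0}^n\mu(k,A)P_k$ trivially satisfies $\mu(S_n)\le\mu(A)$, so Lemma \ref{trace monotone} gives $\varphi(S_n)\le\varphi(A)$. But by linearity and the trace property (all rank-one projections have the same $\varphi$-value $\varphi(P_0)$), the left side equals $\bigl(\sum_{k=0}^n\mu(k,A)\bigr)\varphi(P_0)$, which diverges with $n$ unless $\varphi(P_0)=0$. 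The missing idea is that linearity of $\varphi$ already turns the weighted partial sum into a scalar multiple of $\varphi(p)$; no $\mu$-comparison with $q_n$ is needed, and the ``grouping'' step is superfluous.
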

\begin{proof} Select a positive operator $A\in\mathcal{I}$ such that $A\notin\mathcal{L}_1.$ Let $P_k,$ $k\geq0,$ be rank one projections such that
$$A=\sum_{k=0}^{\infty}\mu(k,A)P_k.$$
Here, the convergence is taken in the strong operator topology. For every $n\geq0,$ it follows from Lemma \ref{trace monotone} that
$$\varphi(A)\geq\varphi(\sum_{k=0}^n\mu(k,A)P_k)=\sum_{k=0}^n\mu(k,A)\varphi(P_k)=\sum_{k=0}^n\mu(k,A)\varphi(P_0).$$
If $\varphi(P_0)\neq0,$ then, the sum at the right hand side tends to $\infty$ as $n\to\infty.$ This contradicts to the assumption $\varphi(A)<\infty.$

Hence, $\varphi$ vanishes on every finite rank projection. Since every finite rank operator is a linear combination of finite rank projections, the assertion  \eqref{singa} follows.

In order to prove \eqref{singb}, we may assume without loss of generality that $B\geq0.$ Fix $\varepsilon>0$ and select $N$ such that, for every $n\geq N$ we have $\mu(n,B)\leq\varepsilon\mu(n,A).$ In particular, $\mu(B)\leq\varepsilon\mu(A)+\mu(B)\chi_{[0,N)}.$ Using Lemma \ref{trace monotone} and part \eqref{singa}, we infer that $\varphi(B)\leq\varepsilon\varphi(A).$ Since $\varepsilon$ is arbitrarily small, the assertion \eqref{singb} follows.
\end{proof}

Since Hilbert spaces $H^{\oplus n}$ and $H$ are isometrically isomorphic, it is often convenient to identify them so that the operator $A_1\oplus\cdots\oplus A_n,$ $A_k\in\mathcal{L}(H),$ $1\leq k\leq n,$ also belongs to $\mathcal{L}(H).$ We note the equalities
$$\lambda(A^{\oplus n})=\sigma_n\lambda(A),\quad \mu(A^{\oplus n})=\sigma_n\mu(A),\quad n\geq1,$$
which are frequently used in the text. For every trace on $\mathcal{I}$ and for every $x\in l_{\infty}$ such that $x\in\mathcal{I}$ we have $\varphi(\sigma_nx)=\varphi(x^{\oplus n})=n\varphi(x),$ $n\geq1.$

\subsection{Ringrose theorem}

An operator $N\in\mathcal{L}(H)$ is said to be normal if $NN^*=N^*N.$ A compact operator $Q\in\mathcal{L}(H)$ is said to be quasi-nilpotent if $\lambda(Q)=0.$ The following result belongs to Ringrose (see Chapter 4 in \cite{Ringrose1971}).

\begin{thm}\label{ringrose theorem} For every compact operator $T\in\mathcal{L}(H),$ there exists a compact normal operator $N\in\mathcal{L}(H)$ and compact quasi-nilpotent operator $Q\in\mathcal{L}(H)$ such that $T=N+Q$ and $\lambda(T)=\lambda(N).$
\end{thm}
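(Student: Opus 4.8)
\emph{Proof proposal.} The plan is to peel off, by means of the Riesz projections at the non-zero eigenvalues of $T$, a compact quasi-nilpotent summand, and to triangularise what is left with Schur's theorem. Since $T$ is compact it has at most countably many non-zero eigenvalues $\lambda_1,\lambda_2,\dots$ with $\lambda_j\to 0$; let $m_j$ be the algebraic multiplicity of $\lambda_j$, let $P_j$ be the Riesz projection of $T$ at $\lambda_j$ and $E_j=\operatorname{ran}P_j$ (so $\dim E_j=m_j$), and set $\mathcal E=\sum_j E_j$, $K=\overline{\mathcal E}$ and $Z=K^{\perp}$. As $K$ is $T$-invariant, with respect to $H=K\oplus Z$ the operator $T$ is block upper triangular, $T=\begin{pmatrix}A&B\\0&D\end{pmatrix}$, with $A=T|_{K}$ and $D=P_{Z}T|_{Z}$ both compact; the resolvent of $T$, and hence every Riesz projection $P_{\mu}(T)=\begin{pmatrix}P_{\mu}(A)&\ast\\0&P_{\mu}(D)\end{pmatrix}$, has the same block shape. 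Reading off ranges from this $2\times 2$ form gives a short exact sequence $0\to\operatorname{ran}P_{\mu}(A)\to\operatorname{ran}P_{\mu}(T)\to\operatorname{ran}P_{\mu}(D)\to 0$, so algebraic multiplicities add: $m_{\mu}(T)=m_{\mu}(A)+m_{\mu}(D)$ for every $\mu\neq 0$. Since $K$ contains the generalised eigenspace of $T$ at $\mu$ and $P_{\mu}(A)=P_{\mu}(T)|_{K}$, one has $m_{\mu}(A)=m_{\mu}(T)$, forcing $m_{\mu}(D)=0$; a compact operator with no non-zero eigenvalue is quasi-nilpotent, so $D$ is quasi-nilpotent. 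It remains to decompose $A$ on $K$, where the generalised eigenvectors are now dense.

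On $K$, order the $\lambda_j$ so that $(|\lambda_j|)_j$ is non-increasing and put $F_0=\{0\}$, $F_n=E_1+\cdots+E_n$. Each $F_n$ is a finite-dimensional $A$-invariant subspace with characteristic polynomial $\prod_{j\le n}(z-\lambda_j)^{m_j}$, and $K=\bigoplus_n (F_n\ominus F_{n-1})$ orthogonally. In the orthogonal splitting $F_n=F_{n-1}\oplus(F_n\ominus F_{n-1})$ the compression $T_n$ of $A$ to $F_n\ominus F_{n-1}$ satisfies $\det(z-T_n)=(z-\lambda_n)^{m_n}$, so by Schur's theorem there is an orthonormal basis of $F_n\ominus F_{n-1}$ in which $T_n$ is upper triangular with constant diagonal $\lambda_n$. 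Concatenating these bases gives an orthonormal basis $\{e_k\}_{k\ge 0}$ of $K$ in which $A$ is upper triangular (block triangular because $AF_n\subseteq F_n$, triangular inside the blocks by Schur), whose diagonal coefficients $d_k=\langle Ae_k,e_k\rangle$ list every $\lambda_j$ exactly $m_j$ times; in particular $d_k\to 0$. Let $N$ be the diagonal operator $e_k\mapsto d_k e_k$ on $K$ — normal, and compact since $d_k\to 0$ — and $Q=A-N$, which is compact and upper triangular with zero diagonal. Then $T=(N\oplus 0)+\begin{pmatrix}Q&B\\0&D\end{pmatrix}$: the first summand is normal and compact, and the second is compact, block upper triangular with both diagonal blocks quasi-nilpotent, hence quasi-nilpotent by the additivity of multiplicities above; the same additivity together with $\lambda(A)=\lambda(N)$ yields $\lambda(T)=\lambda(N\oplus 0)$.

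Two assertions used here are instances of the principle that an upper-triangular compact operator carries its eigenvalues on the diagonal, namely $\lambda(A)=\lambda(N)$ and the quasi-nilpotence of $Q$. This is the crux, and I would isolate it as a lemma: if a compact operator $R$ satisfies $\langle Re_j,e_i\rangle=0$ for $i>j$ relative to an orthonormal basis $\{e_k\}$, then, counted with algebraic multiplicity, the non-zero eigenvalues of $R$ are precisely the non-zero diagonal entries $\langle Re_k,e_k\rangle$. The key point is that, with $P_n$ the projection onto $\operatorname{span}\{e_0,\dots,e_{n-1}\}$, one has $\|R-P_nRP_n\|\to 0$: indeed $R-P_nRP_n=(I-P_n)R+P_nR(I-P_n)$, the first term going to $0$ in norm since $R$ is compact and $P_n\to I$ strongly, and the second since $\|R(I-P_n)\|=\|(I-P_n)R^{\ast}\|\to 0$, because $I-P_n\to 0$ uniformly on the compact set $\overline{R^{\ast}(\{\|x\|\le 1\})}$. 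Now $P_nRP_n$ is, up to a direct sum with $0$, an upper-triangular matrix with diagonal $d_0,\dots,d_{n-1}$, so its non-zero eigenvalues are exactly those $d_k$ with $k<n$; and since $P_nRP_n\to R$ in norm, the Riesz projection of $R$ over a small circle about any $\mu\neq 0$ is the norm limit of the corresponding projections for $P_nRP_n$, so (shrinking the circle, using $d_k\to 0$) the algebraic multiplicity of $\mu$ for $R$ equals $\#\{k:d_k=\mu\}$. I expect this lemma — which is really the content of Ringrose's theorem — to be the main obstacle; the rest is bookkeeping with Riesz projections and Schur's theorem, the only mild subtlety being that $Z=K^{\perp}$ is an \emph{orthogonal} complement (rather than the non-orthogonal Riesz complement), which is what keeps $D$ compact and the block manipulations clean.
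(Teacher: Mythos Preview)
The paper does not prove this theorem; it merely quotes it from Chapter~4 of Ringrose's monograph and uses it as a black box. So there is no ``paper's own proof'' to compare against. Your argument, however, is correct and gives a self-contained proof. The strategy --- split off the closure $K$ of the span of generalised eigenspaces via Riesz projections, show the compression to $K^{\perp}$ is quasi-nilpotent by additivity of algebraic multiplicities in the block-triangular resolvent, then Schur-triangularise $A=T|_K$ block by block along the chain $F_0\subset F_1\subset\cdots$ and take $N$ to be the resulting diagonal --- is essentially a concrete realisation of Ringrose's nest-theoretic approach: rather than invoking the existence of a maximal nest of invariant subspaces abstractly, you manufacture the nest from the Riesz data.

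The lemma you isolate (a compact upper-triangular operator has precisely its nonzero diagonal entries, with multiplicity, as nonzero eigenvalues) is indeed the heart of the matter, and your Riesz-projection argument for it is sound; one sentence would make it airtight. When you pick the small circle $\Gamma$ about $\mu\neq 0$, you need $\Gamma\subset\rho(R)$ \emph{and} $\Gamma\subset\rho(P_nRP_n)$ for all $n$: the first holds because $\sigma(R)\setminus\{0\}$ is discrete ($R$ compact), and the second because $\sigma(P_nRP_n)\subseteq\{d_k\}\cup\{0\}$, a closed set with $\mu$ isolated in it (since $d_k\to 0$); so a sufficiently small circle avoids both. After that, norm convergence of resolvents on $\Gamma$ gives norm convergence of Riesz projections, hence eventual equality of ranks. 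If you prefer to bypass Riesz calculus in the strictly upper-triangular case $Q'$, there is a short direct route via the spectral radius: writing $Q'=P_nQ'P_n+Q'(I-P_n)$ (the cross term $(I-P_n)Q'P_n$ vanishes by triangularity) and using $(P_nQ'P_n)^n=0$ together with $Q'(I-P_n)\cdot P_nQ'P_n=0$, one expands $Q'^{\,m}$ for $m\ge n$ as $\sum_{k<n}(P_nQ'P_n)^k(Q'(I-P_n))^{m-k}$, whence $r(Q')\le\|Q'(I-P_n)\|\to 0$. Either way the lemma holds and your decomposition goes through.
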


\subsection{Hardy-Littlewood submajorization} The following submajorization was introduced by Hardy and Litttlewood. We refer the reader to the book \cite{LSZ} for details.

\begin{defi}\label{majorization def} Let $0\leq A,B\in\mathcal{L}(H).$ We say that the operator $B$ is submajorized by the operator $A$ (written $B\prec\prec A$) if
$$\sum_{k=0}^{n-1}\mu(k,B)\leq\sum_{k=0}^{n-1}\mu(k,B),\quad n>0.$$
\end{defi}

One of the important features of Hardy-Littlewood submajorization is its nice behaviour with respect to the linear structure of $\mathcal{L}(H).$ For every compact positive $A,B\in\mathcal{L}(H),$ we have (see e.g. Theorems 3.3.3 and 3.3.4 in \cite{LSZ})
\begin{equation}\label{maj sum}
A+B\prec\prec \mu(A)+\mu(B)\prec\prec 2\sigma_{1/2}\mu(A+B).
\end{equation}
Here, $\sigma_{1/2}:l_{\infty}\to l_{\infty}$ is an operator defined as follows.
$$\sigma_{1/2}x=(\frac{x(0)+x(1)}{2},\frac{x(2)+x(3)}{2},\cdots),\quad x=(x(0),x(1),\cdots)\in l_{\infty}.$$

\subsection{Uniform submajorization}

The following definition, introduced originally in \cite{KS} (see also \cite{LSZ}), plays a major role in our treatment of traces.

\begin{defi}\label{uniform majorization def} Let $0\leq A,B\in\mathcal{L}(H).$ We say that that the operator $B$ is uniformly submajorized by the operator $A$ (written $B\lhd A$) if there exists $\lambda\in\mathbb{N}$ such that
$$\sum_{k=\lambda m}^{n-1}\mu(k,B)\leq\sum_{k=m}^{n-1}\mu(k,B),\quad\lambda m<n.$$
\end{defi}

Uniform submajorization also behaves nicely with respect to the linear structure of $\mathcal{L}(H).$ One can strengthen the inequalities \eqref{maj sum} as follows. For every compact positive $A,B\in\mathcal{L}(H),$ we have (see e.g. Lemma 3.4.4 in \cite{LSZ})
\begin{equation}\label{uniform maj sum}
A+B\lhd \mu(A)+\mu(B)\lhd 2\sigma_{1/2}\mu(A+B).
\end{equation}

Uniform submajorization is a stronger condition than Hardy-Littlewood submajorization introduced in the preceding subsection. Theorem \ref{ks majorization theorem} below describes the convex hull of the set $\{0\leq z\in l_{\infty}:\ \mu(z)\leq\mu(x)\}$ in terms of uniform submajorization (see Theorem 3.4.2 in \cite{LSZ} or Theorem 5.4 in the original paper \cite{KS}).

\begin{thm}\label{ks majorization theorem} Let $0\leq x,y\in l_{\infty}.$
\begin{enumerate}[(a)]
\item If $y$ belongs to a convex hull of the set $\{0\leq z\in l_{\infty}:\ \mu(z)\leq\mu(x)\},$ then $y\lhd x.$
\item If $y\lhd x,$ then, for every $\varepsilon>0,$ the $(1-\varepsilon)y$ belongs to a convex hull of the set $\{0\leq z\in l_{\infty}:\ \mu(z)\leq\mu(x)\}.$
\end{enumerate}
\end{thm}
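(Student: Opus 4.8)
The two parts call for very different arguments, and I would treat them separately.

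Part (a) I would prove by induction on the number $N$ of terms in a convex representation $y=\sum_{i=1}^{N}\theta_iz_i$, with $\theta_i\geq 0$, $\sum_i\theta_i=1$ and $\mu(z_i)\leq\mu(x)$. For $N=1$ we have $\mu(y)=\mu(z_1)\leq\mu(x)$, so $y\lhd x$ with constant $1$. For the inductive step, write $y=\theta_1z_1+(1-\theta_1)y'$, where (assuming $\theta_1<1$) $y'=(1-\theta_1)^{-1}\sum_{i\geq 2}\theta_iz_i$ is a convex combination of $N-1$ operators each dominated in singular values by $x$; by the inductive hypothesis $\mu(y')\lhd\mu(x)$, say with constant $\lambda'$. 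Put $A=\theta_1z_1$ and $B=(1-\theta_1)y'$. From $\mu(A)=\theta_1\mu(z_1)\leq\theta_1\mu(x)$ one gets $\sum_{k=m}^{n-1}\mu(k,A)\leq\theta_1\sum_{k=m}^{n-1}\mu(k,x)$, and from $\mu(B)=(1-\theta_1)\mu(y')$ one gets $\sum_{k=\lambda'm}^{n-1}\mu(k,B)\leq(1-\theta_1)\sum_{k=m}^{n-1}\mu(k,x)$. Adding these (and discarding the terms of $\mu(A)$ with index in $[m,\lambda'm)$, which is legitimate since $\lambda'\geq 1$) shows that the non-increasing sequence $\mu(A)+\mu(B)$ satisfies $\mu(A)+\mu(B)\lhd\mu(x)$ with constant $\lambda'$. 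Since $A+B\lhd\mu(A)+\mu(B)$ by \eqref{uniform maj sum}, and since uniform submajorization is transitive (with the product of the two constants), we conclude $y=A+B\lhd x$. This completes the induction, the final constant being a power of the one appearing in \eqref{uniform maj sum}.

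Part (b) is the substantial direction. I would first reduce to the case where $x=\mu(x)$ and $y=\mu(y)$ are non-increasing sequences (conjugating any decomposition obtained for $\mu(y)$ by the permutation that rearranges $y$), and dispatch the easy case $\mu(y)\notin c_0$ separately, concentrating on $x,y\in c_0$. Fix $\varepsilon>0$ and a constant $\lambda$ witnessing $y\lhd x$. The aim is to exhibit $(1-\varepsilon)y$ as an \emph{explicit finite} convex combination of operators with singular values $\leq\mu(x)$. The plan is to cut the support of $y$ into consecutive blocks on which $y$ is nearly constant, with the near-constant values decaying geometrically, and then to spread the mass of these blocks over roughly $\lambda$ "channels": channel $c$ receives a rescaled copy (with scaling factor comparable to the number of channels) of a staggered selection of blocks, the staggering in depth being chosen so that no channel piles up too many of the largest values of $y$. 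The inequalities $\sum_{k=\lambda m}^{n-1}\mu(k,y)\leq\sum_{k=m}^{n-1}\mu(k,x)$, read off along a geometric sequence of scales $m$, supply exactly the budget needed to keep the decreasing rearrangement of each channel's content below $\mu(x)$; the factor $1-\varepsilon$ absorbs the loss from replacing $y$ by its block-constant approximation and from discarding a negligible far tail, so that only finitely many pieces survive.

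The main obstacle, as I see it, is calibrating this construction in part (b): the block decomposition and the staggering of the channels must be chosen, in terms of $\lambda$ and $\varepsilon$, so that the channels simultaneously reassemble to $(1-\varepsilon)y$, have singular value sequences $\leq\mu(x)$, and are finite in number. Everything else — part (a) and all the reductions — is routine once \eqref{uniform maj sum} and the transitivity of $\lhd$ are available. A possible alternative for part (b), should the bare-hands construction become unwieldy, is to first prove a uniform sharpening of the classical fact that $y\prec\prec x$ places $y$ in the \emph{closed} convex hull of $\{z:\mu(z)\leq\mu(x)\}$, and then use the room afforded by $y\lhd x$, together with the $\varepsilon$-slack, to upgrade the closure to a genuine finite convex combination.
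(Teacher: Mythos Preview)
The paper does not prove this theorem. Theorem~\ref{ks majorization theorem} is stated in the Preliminaries with an explicit citation---``see Theorem 3.4.2 in \cite{LSZ} or Theorem 5.4 in the original paper \cite{KS}''---and no argument is supplied. So there is nothing in the present paper to compare your proposal against; the authors simply import the result from Kalton--Sukochev and the book \cite{LSZ}.

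That said, a brief assessment of your sketch on its own merits: your inductive argument for part~(a) is sound. The key point, that $\mu(A)+\mu(B)\lhd x$ with constant $\lambda'$ once $\mu(A)\leq\theta_1\mu(x)$ and $B\lhd x$ with constant $\lambda'$, is correct (the tail sums combine as you say), and the composition with \eqref{uniform maj sum} via transitivity is fine; the resulting constant does grow with $N$, but only existence of some $\lambda$ is claimed. For part~(b) you give a plan rather than a proof, and you are right that this is where the real work lies. The construction in \cite{KS} is indeed of the flavour you describe---a block decomposition at geometric scales together with a finite averaging---but the bookkeeping is delicate, and your sketch does not yet supply it. If you want to see the details, consult the references the paper points to; reproducing them here is outside the scope of what the present paper attempts.
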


An importance of uniform submajorization for studying of traces may be seen from the following strengthening of Lemma \ref{trace monotone}.\footnote{A variant of Lemma \ref{trace monotone lhd} for symmetrically normed ideals was proved in Lemma 4.2.5 in \cite{LSZ}.}

\begin{lem}\label{trace monotone lhd} Let $\mathcal{I}$ be an ideal and let $\varphi:\mathcal{I}\to\mathbb{C}$ be a linear mapping. The functional $\varphi$ is a positive trace on $\mathcal{I}$ if and only if $\varphi(B)\leq\varphi(A)$ for all $0\leq A,B\in\mathcal{I}$ such that $B\lhd A.$
\end{lem}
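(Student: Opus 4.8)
The plan is to deduce Lemma \ref{trace monotone lhd} from Lemma \ref{trace monotone} together with Theorem \ref{ks majorization theorem}. The necessity direction is where the real content lies; sufficiency is essentially trivial, since $\mu(B)\leq\mu(A)$ implies $B\lhd A$ (take $\lambda=1$ in Definition \ref{uniform majorization def}), so if $\varphi(B)\leq\varphi(A)$ holds for all positive $A,B$ with $B\lhd A$, then in particular it holds whenever $\mu(B)\leq\mu(A)$, and Lemma \ref{trace monotone} identifies such $\varphi$ as exactly the positive traces.

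For necessity, suppose $\varphi$ is a positive trace and $0\leq A,B\in\mathcal{I}$ with $B\lhd A$. First I would reduce to the diagonal sequences: by the argument in the proof of Lemma \ref{trace monotone}, $\varphi(A)=\varphi(\mu(A))$ and $\varphi(B)=\varphi(\mu(B))$, so it suffices to show $\varphi(\mu(B))\leq\varphi(\mu(A))$, i.e. we may assume $A=\mu(A)=x$ and $B=\mu(B)=y$ are positive decreasing sequences with $y\lhd x$. Fix $\varepsilon\in(0,1)$. By Theorem \ref{ks majorization theorem}(b), $(1-\varepsilon)y$ lies in the convex hull of $\{0\leq z\in l_\infty:\mu(z)\leq\mu(x)=x\}$, so we may write $(1-\varepsilon)y=\sum_{j=1}^{N}t_j z_j$ with $t_j\geq 0$, $\sum_j t_j=1$, and $\mu(z_j)\leq x$ for each $j$. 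Each $z_j$ then lies in $\mathcal{I}$ (as $\mu(z_j)\leq x=\mu(A)$ and $\mathcal{I}$ is an ideal), so by linearity and by Lemma \ref{trace monotone} applied to each $z_j$,
\[
(1-\varepsilon)\varphi(y)=\sum_{j=1}^{N}t_j\varphi(z_j)\leq\sum_{j=1}^{N}t_j\varphi(x)=\varphi(x).
\]
Letting $\varepsilon\to 0$ gives $\varphi(y)\leq\varphi(x)$, hence $\varphi(B)\leq\varphi(A)$, as required.

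The one point that needs a little care — and which I expect to be the main (if modest) obstacle — is the passage from an arbitrary positive operator to its singular value sequence and back, together with the verification that the convex combination really can be taken with all $z_j$ inside $\mathcal{I}$: one must make sure that the finitely many $z_j$ produced by Theorem \ref{ks majorization theorem}(b) are genuinely bounded (which they are, being in $l_\infty$) and dominated in singular values by $x\in\mathcal{I}$, so that the ideal property places each $z_j$ in $\mathcal{I}$ and $\varphi(z_j)$ makes sense. Everything else is bookkeeping: linearity of $\varphi$, the commutator computation $A-\mu(A)\in{\rm Com}(\mathcal{I})$ already recorded in the proof of Lemma \ref{trace monotone}, and the limit in $\varepsilon$.
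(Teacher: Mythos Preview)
Your proposal is correct and follows essentially the same route as the paper: for necessity you invoke Theorem~\ref{ks majorization theorem}(b) to write $(1-\varepsilon)\mu(B)$ as a convex combination of sequences $z_j$ with $\mu(z_j)\leq\mu(A)$, then apply Lemma~\ref{trace monotone} termwise and let $\varepsilon\to0$; for sufficiency you observe $\mu(B)\leq\mu(A)\Rightarrow B\lhd A$ and fall back on Lemma~\ref{trace monotone}. The only cosmetic difference is that the paper does not explicitly reduce to diagonal sequences first (it applies the convex-hull decomposition directly to $\mu(B)$ and uses $\varphi(B)=\varphi(\mu(B))$ at the end), but the argument is the same.
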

\begin{proof} Let $\varphi$ be a positive trace on $\mathcal{I}$ and let $0\leq A,B\in\mathcal{I}$ be such that $B\lhd A.$ Fix $\varepsilon\in(0,1).$ By Theorem \ref{ks majorization theorem}, there exist $n\geq1,$ positive sequences $z_k\in c_0$ and positive constants $\lambda_k\in(0,1),$ $1\leq k\leq n$ such that $\mu(z_k)\leq\mu(A)$ for every $1\leq k\leq n,$ and such that
$$(1-\varepsilon)\mu(B)\leq\sum_{k=1}^n\lambda_kz_k,\quad \sum_{k=1}^n\lambda_k=1.$$
Observe that
$$(1-\varepsilon)\varphi(\mu(B))\leq\sum_{k=1}^n\lambda_k\varphi(z_k)$$
by the assumption and Lemma \ref{trace monotone}. Applying Lemma \ref{trace monotone} again, we infer that $\varphi(B)=\varphi(\mu(B))$ and $\varphi(z_k)\leq\varphi(A)$ for $1\leq k\leq n.$ Hence, $(1-\varepsilon)\varphi(B)\leq\varphi(A).$ Since $\varepsilon$ is arbitrarily small, it follows that $\varphi(B)\leq\varphi(A).$ This proves necessity.

In order to prove sufficiency, choose $0\leq A,B\in\mathcal{I}$ such that $\mu(B)\leq\mu(A).$ It follows from Definition \ref{uniform majorization def} that $B\lhd A$ and, therefore, by the assumption, $\varphi(B)\leq\varphi(A).$ An application of Lemma \ref{trace monotone} completes the proof of sufficiency.
\end{proof}

\section{Main technical inequalities}

The main result of this section is Theorem \ref{hardest estimate} below. It is the core component of our proof of Theorem \ref{extended kalton}.

Define a non-linear homogeneous operator $\mathbf{S}:l_{\infty}\to l_{\infty}$ by setting
\begin{equation}\label{s trans def}
(\mathbf{S}x)(k)=\mu(k,x)(1+\frac1{k+1}\log(\frac{\prod_{m=0}^k\mu(m,x)}{\mu(k,x)^{k+1}})),\quad n\geq0.
\end{equation}

The operator $\mathbf{S}$ is a technical device used in the next section to estimate the action of Cesaro operator $C$ on the eigenvalue sequences of real and imaginary parts of a quasi-nilpotent operator. In this section, we obtain main technical estimate for this operator.

\begin{lem}\label{s dec} For every $x\in l_{\infty},$ we have $\mathbf{S}x=\mu(\mathbf{S}x).$
\end{lem}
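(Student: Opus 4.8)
The plan is to show that the sequence $\mathbf{S}x$ is already nonincreasing, so that taking its decreasing rearrangement changes nothing. First I would observe that $\mathbf{S}x$ depends on $x$ only through $\mu(x)$, so without loss of generality we may assume $x = \mu(x)$, i.e.\ $x(0) \ge x(1) \ge \cdots \ge 0$. Writing $\Pi_k = \prod_{m=0}^k x(m)$, the defining formula becomes
$$(\mathbf{S}x)(k) = x(k)\Bigl(1 + \frac{1}{k+1}\log\frac{\Pi_k}{x(k)^{k+1}}\Bigr) = x(k) + \frac{x(k)}{k+1}\sum_{m=0}^k \log\frac{x(m)}{x(k)}.$$
The bracketed factor is $\ge 1$ since each $x(m) \ge x(k)$ for $m \le k$, so in particular $(\mathbf{S}x)(k) \ge 0$; the real work is monotonicity in $k$.

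The key step is to prove $(\mathbf{S}x)(k) \ge (\mathbf{S}x)(k+1)$ for each $k$. I would set $a = x(k)$, $b = x(k+1)$, so $0 \le b \le a$, and denote $s = \frac{1}{k+1}\log\frac{\Pi_k}{a^{k+1}} \ge 0$, so that $(\mathbf{S}x)(k) = a(1+s)$. A direct computation gives
$$(\mathbf{S}x)(k+1) = b\Bigl(1 + \frac{1}{k+2}\log\frac{\Pi_k \cdot b}{b^{k+2}}\Bigr) = b\Bigl(1 + \frac{1}{k+2}\bigl((k+1)(s + \log a) + \log b - (k+1)\log b - \log b\bigr)\Bigr),$$
which simplifies to $(\mathbf{S}x)(k+1) = b\bigl(1 + \frac{k+1}{k+2}(s + \log\frac{a}{b})\bigr)$. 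So the desired inequality $(\mathbf{S}x)(k) \ge (\mathbf{S}x)(k+1)$ reads
$$a(1+s) \ge b\Bigl(1 + \tfrac{k+1}{k+2}s + \tfrac{k+1}{k+2}\log\tfrac{a}{b}\Bigr).$$
Since $\frac{k+1}{k+2}s \le s$ and $a \ge b$, it suffices to establish $a(1+s) \ge b(1+s) + b\log\frac{a}{b}$, i.e.\ $(a-b)(1+s) \ge b\log\frac{a}{b}$. As $s \ge 0$ it is enough to check $a - b \ge b\log\frac{a}{b}$, equivalently (dividing by $b$, assuming $b>0$; the case $b=0$ being trivial since then the right side vanishes and $a \ge 0$) $t - 1 \ge \log t$ where $t = a/b \ge 1$, which is the standard elementary inequality $\log t \le t-1$.

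The main obstacle, and the only nonroutine point, is organizing the algebra in the computation of $(\mathbf{S}x)(k+1)$ cleanly enough that the telescoping of the logarithm of $\Pi_{k+1} = \Pi_k \cdot x(k+1)$ against $x(k+1)^{k+2}$ makes the factor $\frac{k+1}{k+2}(s + \log\frac{a}{b})$ visible; once that is in place the inequality collapses to $\log t \le t - 1$. I should also handle degenerate cases: if $x(k) = 0$ then $x(m) = 0$ for all $m \ge k$ (as $x$ is nonincreasing and nonnegative), so $(\mathbf{S}x)(m) = 0$ for all $m \ge k$ by the convention that $0 \cdot \log(\cdot) = 0$, and monotonicity past that index is automatic; the formula should be read with this convention throughout. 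Combining the per-index inequalities gives that $\mathbf{S}x$ is nonincreasing and nonnegative, hence $\mathbf{S}x = \mu(\mathbf{S}x)$.
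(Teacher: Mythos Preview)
Your proof is correct. Both you and the paper reduce to the per-index inequality $(\mathbf{S}x)(k)\ge(\mathbf{S}x)(k+1)$ after assuming $x=\mu(x)$, but the execution differs slightly. The paper fixes $x(0),\dots,x(k)$, observes that the one-variable function $t\mapsto t\bigl(1+\tfrac{k+1}{k+2}\log(C/t)\bigr)$ (with $C=(\prod_{m=0}^k x(m))^{1/(k+1)}$) is increasing on $[0,C]$, and hence the maximum of $(\mathbf{S}x)(k+1)$ over admissible $x(k+1)$ occurs at $x(k+1)=x(k)$; evaluating there and replacing $\tfrac{1}{k+2}$ by $\tfrac{1}{k+1}$ finishes. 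You instead carry out the algebra directly, first bounding the factors $\tfrac{k+1}{k+2}$ by $1$ and then collapsing everything to the elementary inequality $\log t\le t-1$ for $t=a/b\ge 1$. The two arguments are equivalent in content (the monotonicity claim the paper uses is itself equivalent to $\log t\le t-1$), but the paper's packaging via ``maximize over the last variable'' has the advantage that it is recycled verbatim in the proof of the next lemma (Lemma~\ref{ocenka s}\eqref{ocsa}); your direct computation is cleaner and more explicit about the degenerate case $x(k)=0$, which the paper leaves implicit.
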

\begin{proof} Without loss of generality, we may assume that $x=\mu(x).$ We have to prove $(\mathbf{S}x)(k+1)\leq(\mathbf{S}x)(k),$ $k\geq0.$ First, note that, for every constant $C>0,$ the function
$$x\to x(1+\frac{k+1}{k+2}\log(\frac{C}{x})),\quad x\in[0,C]$$
is increasing. Fixing the values $x(0),\cdots,x(k),$ and setting $C=(\prod_{m=0}^kx(m))^{1/(k+1)},$ we infer that the function
$$x(k+1)\to (\mathbf{S}x)(k+1)=x(k+1)(1+\frac{k+1}{k+2}\log(\frac{C}{x(k+1)}))$$
is also increasing. Thus, for given $x(0),\cdots,x(k),$ the function $x(k+1)\to (\mathbf{S}x)(k+1)$ attains its maximal value when $x(k+1)$ takes its maximal value, which is $x(k).$ Therefore,
$$(\mathbf{S}x)(k+1)\leq x(k)(1+\frac{k+1}{k+2}\log(\frac{C}{x(k)}))=x(k)(1+\frac1{k+2}\log(\frac{\prod_{m=0}^kx(m)}{x(k)^{k+1}})).$$
It is immediate that the right hand side of the latter inequality does not exceed $(\mathbf{S}x)(k).$ This proves the assertion.
\end{proof}

\begin{lem}\label{ocenka s} If $x=\mu(x)\in l_{\infty},$ then
\begin{enumerate}[(a)]
\item\label{ocsa} For every $n\geq0$ and for every $k\geq n,$ we have
$$(\mathbf{S}x)(k)\leq x(n)(1+\frac1{k+1}\log(\frac{\prod_{m=0}^nx(m)}{x(n)^{n+1}})).$$
\item\label{ocsb} For every $n\geq0$ and for every $k\leq n,$ we have
$$(\mathbf{S}x)(k)\leq x(k)(1+\frac1{k+1}\log(\frac{\prod_{m=0}^nx(m)}{x(n)^{n+1}})).$$
\end{enumerate}
\end{lem}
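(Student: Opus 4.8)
The plan is to reduce both inequalities to elementary one-variable monotonicity, in the spirit of the computation already carried out in the proof of Lemma \ref{s dec}. Throughout I write $x(m)=\mu(m,x)$ (legitimate since $x=\mu(x)$) and $P_j=\prod_{m=0}^j x(m)$; since $x$ is non-increasing, $P_j/x(j)^{j+1}=\prod_{m=0}^j\bigl(x(m)/x(j)\bigr)\ge 1$, so every logarithm appearing below is non-negative. The degenerate cases are disposed of first: if $x(k)=0$ then $(\mathbf{S}x)(k)=0$ (with the usual convention $t\log(1/t)\to0$) and the right-hand side is non-negative, so we may assume $x(m)>0$ for $m\le k$; in part (b) we may likewise assume $x(n)>0$, since otherwise the right-hand side is $+\infty$.

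For part (b), where $k\le n$, I would simply compare the arguments of the two logarithms. Since $(\mathbf{S}x)(k)=x(k)\bigl(1+\frac{1}{k+1}\log(P_k/x(k)^{k+1})\bigr)$ and $x(k)\ge0$, it suffices to prove $P_k/x(k)^{k+1}\le P_n/x(n)^{n+1}$. Writing $\prod_{m=0}^n\bigl(x(m)/x(n)\bigr)=\bigl(\prod_{m=0}^k x(m)/x(n)\bigr)\bigl(\prod_{m=k+1}^n x(m)/x(n)\bigr)$, the second factor is $\ge1$ because $x(m)\ge x(n)$ for $m\le n$, and the first factor is $\ge\prod_{m=0}^k\bigl(x(m)/x(k)\bigr)=P_k/x(k)^{k+1}$ because $x(n)\le x(k)$. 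That settles (b).

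For part (a), where $k\ge n$, I would first bound the numerator of the logarithm: $\prod_{m=0}^k x(m)=P_n\prod_{m=n+1}^k x(m)\le P_n\,x(n)^{k-n}$, since $x(m)\le x(n)$ for $n<m\le k$. Setting $C:=P_n x(n)^{k-n}$ and using $x(k)\ge0$, this yields $(\mathbf{S}x)(k)\le h(x(k))$, where $h(t):=t\bigl(1+\frac{1}{k+1}\log(C/t^{k+1})\bigr)$. A one-line differentiation gives $h'(t)=\frac{1}{k+1}\log(C/t^{k+1})$, which is $\ge0$ for $t\le x(n)$ because $C/x(n)^{k+1}=P_n/x(n)^{n+1}\ge1$; hence $h$ is non-decreasing on $(0,x(n)]$. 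As $k\ge n$ forces $x(k)\le x(n)$, I conclude $(\mathbf{S}x)(k)\le h(x(n))=x(n)\bigl(1+\frac{1}{k+1}\log(P_n/x(n)^{n+1})\bigr)$, which is the claimed estimate.

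I do not expect a serious obstacle; the only point demanding care is the bookkeeping of signs — one must know the relevant logarithms are non-negative before pulling the factor $x(k)$ (or $x(n)$) out of an inequality — together with an honest treatment of the degenerate cases $x(k)=0$ or $x(n)=0$. Modulo that, both parts are short calculus estimates, and it is worth noting that in part (a) the bound is sharp, equality holding when $x$ is constant on $\{n,\dots,k\}$.
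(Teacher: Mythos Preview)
Your proof is correct. Part~(b) is essentially identical to the paper's argument: both reduce to the inequality $P_k/x(k)^{k+1}\le P_n/x(n)^{n+1}$; the paper normalizes $x(n)=1$ by homogeneity, while you carry $x(n)$ through explicitly, but the computation is the same.

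In part~(a) the organization differs. The paper iterates the monotonicity step from Lemma~\ref{s dec}: it replaces $x(k)$ by $x(k-1)$, then $x(k-1)$ by $x(k-2)$, and so on down to $x(n)$, each step invoking the one-variable increasing property of $t\mapsto t(1+c\log(C/t))$ on $(0,C]$. You instead bound the tail product $\prod_{m=n+1}^k x(m)\le x(n)^{k-n}$ in one stroke and then apply that monotonicity just once. Both routes rest on the same calculus fact; yours is a little more direct and avoids the descent, while the paper's version makes the connection to Lemma~\ref{s dec} more visible. There is no substantive difference in strength or length.
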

\begin{proof} Despite the similarity of both estimates, they require essentially different proofs.
\begin{enumerate}[(a)]
\item Fixing the values $x(0),\cdots,x(k-1),$ and arguing as in the proof of Lemma \ref{s dec}, we infer that the function $x(k)\to (\mathbf{S}x)(k)$ is increasing. Hence,
$$(\mathbf{S}x)(k)\leq x(k-1)(1+\frac1{k+1}\log(\frac{\prod_{m=0}^{k-1}x(m)}{x(k-1)^k})).$$
Fixing the values $x(0),\cdots,x(k-2),$ and repeating the argument above, we infer that the function of the variable $x(k-1)\in[0,x(k-2)]$ standing at the right hand side of the preceding inequality is increasing. Hence,
$$(\mathbf{S}x)(k)\leq x(k-2)(1+\frac1{k+1}\log(\frac{\prod_{m=0}^{k-2}x(m)}{x(k-2)^{k-1}})).$$
Repeating the argument for $k-2,k-3,\cdots,n+1,$ we conclude the proof.
\item Since both sides of the inequality are homogeneous with respect to $x,$ we may assume without loss of generality that $x(n)=1.$ It follows that $x(k)\geq1,$ $1\leq k\leq n,$ and, therefore,
$$\frac{\prod_{m=0}^kx(m)}{x(k)^{k+1}}\leq\prod_{m=0}^kx(m)\leq\prod_{m=0}^nx(m)=\frac{\prod_{m=0}^nx(m)}{x(n)^{n+1}}.$$
Combining the preceding estimate with the Definition of $\mathbf{S}$ given in \eqref{s trans def} yields the assertion.
\end{enumerate}
\end{proof}

\begin{lem}\label{binomial} For every $u>0$ and for every positive integer $n,$ we have
$$\prod_{k=0}^{2n}(1+\frac{u}{k+1})\leq 2^{2n+u+2}.$$
\end{lem}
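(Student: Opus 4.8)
The plan is to reduce the product $\prod_{k=0}^{2n}(1+\tfrac{u}{k+1})$ to a ratio of Gamma functions and then estimate crudely. First I would observe that
$$\prod_{k=0}^{2n}\Bigl(1+\frac{u}{k+1}\Bigr)=\prod_{j=1}^{2n+1}\frac{j+u}{j}=\frac{\Gamma(2n+2+u)}{\Gamma(2+u)\,(2n+1)!},$$
after shifting the index $j=k+1$ and recognizing the telescoping of the falling/rising factorial. So the task becomes bounding $\Gamma(2n+2+u)\big/\bigl(\Gamma(2+u)(2n+1)!\bigr)$ by $2^{2n+u+2}$.

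The cleanest route avoids Stirling entirely and works directly with the product. Write $N=2n+1$, so the left-hand side is $\prod_{j=1}^{N}\frac{j+u}{j}$. I would split according to whether $j\le u$ or $j>u$ (if $u\le 1$ the first block is empty and the argument is even easier). For the indices with $j>u$ one has $j+u<2j$, hence $\frac{j+u}{j}<2$, contributing at most a factor $2^{N}=2^{2n+1}$. For the indices $1\le j\le \lfloor u\rfloor$ one instead bounds $\frac{j+u}{j}\le \frac{2u}{j}$ and then crudely $\prod_{j=1}^{\lfloor u\rfloor}\frac{2u}{j}=\frac{(2u)^{\lfloor u\rfloor}}{\lfloor u\rfloor!}$; a standard estimate $\frac{m!}{\,} \ge (m/e)^m$ together with $(2u)^{u}\le (2e)^{u}\cdot$\,(something) lets one absorb this into $2^{u}$ up to the additive slack of $2$ in the exponent. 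Concretely, $\frac{(2u)^{u}}{u!}\le e^{2u}$ is the kind of inequality one uses, and then $e^{2u}=2^{2u/\log 2}$ which is larger than $2^{u}$, so I would instead be more careful and use $\frac{(2u)^{\lfloor u\rfloor}}{\lfloor u\rfloor!}\le e^{2u}$ is too lossy; the right bookkeeping is to note $\prod_{j=1}^{N}(1+\tfrac{u}{j})\le \exp\bigl(u\sum_{j=1}^{N}\tfrac1j\bigr)\le \exp\bigl(u(1+\log N)\bigr)$ and compare $\exp(u(1+\log(2n+1)))$ with $2^{2n+u+2}$.

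Actually the simplest honest approach is the logarithmic one: taking logarithms, it suffices to show $\sum_{k=0}^{2n}\log(1+\tfrac{u}{k+1})\le (2n+u+2)\log 2$. Using $\log(1+t)\le t$ gives $\sum_{k=0}^{2n}\log(1+\tfrac{u}{k+1})\le u\sum_{k=1}^{2n+1}\tfrac1k\le u(1+\log(2n+1))$, so it remains to check $u(1+\log(2n+1))\le (2n+u+2)\log 2$, i.e. $u\bigl(1+\log(2n+1)-\log 2\bigr)\le (2n+2)\log 2$, i.e. $u\log\tfrac{e(2n+1)}{2}\le (2n+2)\log 2$. This is \emph{false} for $u$ large with $n$ fixed, so the bound $\log(1+t)\le t$ is too weak and must be replaced on the large terms. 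The main obstacle is exactly this: the estimate has to remain valid for \emph{all} $u>0$, including $u$ much larger than $n$, so one cannot simply bound each factor by $e^{u/(k+1)}$. The fix is the two-regime split above, bounding $\tfrac{u}{k+1}+1\le 2$ when $k+1\ge u$ (contributing $2^{2n+1}$ from at most $2n+1$ terms) and bounding the remaining $\le u$ early terms by $\prod_{k+1\le u}(1+\tfrac{u}{k+1})\le \prod_{k+1\le u}\tfrac{2u}{k+1}$, then invoking $\prod_{j=1}^{m}\tfrac{2u}{j}\le 2^{u+1}$ valid whenever $m\le u$ (this last inequality is itself proved by $\log$-convexity or by comparing with $2^{m}\cdot\tfrac{u^m}{m!}\le 2^{m}e^{u}$ and then checking the constant). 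Assembling the two regimes yields the factor $2^{2n+1}\cdot 2^{u+1}=2^{2n+u+2}$, which is precisely the claimed bound. I expect the delicate point to be pinning down the clean auxiliary inequality for the short initial product so that the additive constant comes out to exactly $2$ in the exponent; everything else is routine.
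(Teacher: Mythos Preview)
Your two-regime split has a genuine gap: the auxiliary inequality you lean on,
\[
\prod_{j=1}^{m}\frac{2u}{j}=\frac{(2u)^{m}}{m!}\le 2^{u+1}\qquad(m\le u),
\]
is false. Already for $u=m=3$ the left side is $6^{3}/3!=36$ while the right side is $16$; in general, for integer $u=m$ Stirling gives $(2m)^{m}/m!\sim (2e)^{m}/\sqrt{2\pi m}$, which grows like $(2e)^{u}$, not $2^{u}$. Your suggested justification via $2^{m}\cdot u^{m}/m!\le 2^{m}e^{u}$ only confirms this: with $m=u$ one gets $(2e)^{u}$, far exceeding $2^{u+1}$. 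So budgeting $2^{2n+1}$ from the large-$j$ block and $2^{u+1}$ from the small-$j$ block cannot close. (There is also a minor slip in your Gamma identity: the denominator should be $\Gamma(1+u)$, not $\Gamma(2+u)$.)

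The paper's proof is one line and avoids all of this. Take $m=\lceil u\rceil\in\mathbb{N}$; then
\[
\prod_{k=0}^{2n}\Bigl(1+\frac{u}{k+1}\Bigr)\le\prod_{k=0}^{2n}\Bigl(1+\frac{m}{k+1}\Bigr)=\frac{(2n+m+1)!}{m!\,(2n+1)!}=\binom{2n+m+1}{m}\le 2^{2n+m+1}\le 2^{2n+u+2}.
\]
The point you missed is that for \emph{integer} $u$ the product is literally a binomial coefficient, so the trivial bound $\binom{N}{k}\le 2^{N}$ finishes instantly; rounding $u$ up costs exactly the extra $+1$ in the exponent. Your own Gamma formulation was pointing in this direction, but you never exploited it. Your split \emph{can} be repaired if you tighten the large-$j$ count to $2^{2n+1-\lfloor u\rfloor}$ factors and bound the small-$j$ block by $\prod_{j=1}^{\lfloor u\rfloor}(1+u/j)\le\binom{2\lfloor u\rfloor+1}{\lfloor u\rfloor}\le 2^{2\lfloor u\rfloor+1}$, giving $2^{2n+\lfloor u\rfloor+2}\le 2^{2n+u+2}$; but once you do that bookkeeping you have essentially reproduced the binomial-coefficient argument anyway.
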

\begin{proof} Let $u\in[m-1,m]$ with $m\in\mathbb{N}.$ We have
$$\prod_{k=0}^{2n}(1+\frac{u}{k+1})\leq\prod_{k=0}^{2n}(1+\frac{m}{k+1})=\frac{\prod_{k=0}^{2n}(m+k+1)}{\prod_{k=0}^{2n}(k+1)}=$$
$$=\frac{(2n+m+1)!}{m!(2n+1)!}\leq 2^{2n+m+1}\leq 2^{2n+u+2}.$$
\end{proof}

\begin{thm}\label{hardest estimate} If $x=\mu(x)\in l_{\infty},$ then $(\mathbf{S}x)\prec\prec_{\log} 4(x\oplus x).$
\end{thm}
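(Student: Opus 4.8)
The plan is to establish the logarithmic submajorization $\mathbf{S}x \prec\prec_{\log} 4(x\oplus x)$ directly from the definition, i.e. to prove, for every $N\geq 0$,
$$\prod_{k=0}^{N}(\mathbf{S}x)(k) \leq \prod_{k=0}^{N}\mu(k, 4(x\oplus x)) = 4^{N+1}\prod_{k=0}^{N}(\sigma_2 x)(k).$$
Since $\mathbf{S}x = \mu(\mathbf{S}x)$ by Lemma \ref{s dec}, the left side is already a product of the $N+1$ largest terms, so no rearrangement is needed there. Because the inequality is homogeneous, I would normalize; the cleanest choice (mirroring Lemma \ref{ocenka s}(b)) will be to split the range $\{0,\dots,N\}$ at the index where the $\mathbf{S}x$-factors change character, or more simply to treat the two cases $N=2n$ and $N=2n+1$ so that $\prod_{k=0}^{N}(\sigma_2 x)(k)$ is a clean power $\big(\prod_{m=0}^{n}x(m)\big)^{2}$ (up to one extra factor of $x(n)$ in the odd case).

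The core of the argument is to bound $\prod_{k=0}^{N}(\mathbf{S}x)(k)$ using the two estimates of Lemma \ref{ocenka s}. Fix $n$ with $N\in\{2n,2n+1\}$. For the indices $k\leq n$ I would apply Lemma \ref{ocenka s}(b):
$$(\mathbf{S}x)(k) \leq x(k)\Big(1+\tfrac{1}{k+1}\log\big(\tfrac{\prod_{m=0}^{n}x(m)}{x(n)^{n+1}}\big)\Big),$$
and for the indices $k$ with $n< k\leq N$ I would apply Lemma \ref{ocenka s}(a) with that same fixed $n$:
$$(\mathbf{S}x)(k) \leq x(n)\Big(1+\tfrac{1}{k+1}\log\big(\tfrac{\prod_{m=0}^{n}x(m)}{x(n)^{n+1}}\big)\Big).$$
Multiplying all $N+1$ of these, the product of the leading factors $x(k)$ (for $k\leq n$) times $x(n)$ (for $n<k\leq N$) is exactly $\prod_{m=0}^{n}x(m)\cdot x(n)^{N-n}$, which equals $\prod_{k=0}^{N}(\sigma_2 x)(k)$ (in the even case $N=2n$; the odd case produces one more factor $x(n)$, matching $\sigma_2 x$ again). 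So after the homogeneity normalization $x(n)=1$, setting $u := \log\big(\prod_{m=0}^{n}x(m)\big)\geq 0$ (the non-negativity is Weyl/monotonicity: $x$ is decreasing so each $x(m)\geq x(n)=1$), what remains to be shown is
$$\prod_{k=0}^{N}\Big(1+\tfrac{u}{k+1}\Big) \leq 4^{N+1}.$$

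This last inequality is precisely what Lemma \ref{binomial} is designed to deliver: with $N=2n$ it gives $\prod_{k=0}^{2n}(1+\tfrac{u}{k+1})\leq 2^{2n+u+2}$, and I then need $2^{2n+u+2}\leq 4^{2n+1}=2^{4n+2}$, i.e. $u\leq 2n$ — but $u=\log\prod_{m=0}^{n}x(m) = \sum_{m=0}^n \log x(m)$ is not automatically $\leq 2n$, so the constant $4$ (rather than $2$) must be absorbing a genuine extra factor. The honest accounting is: the product $\prod_{m=0}^n x(m)$ appears on the right-hand side target as $\prod_{k=0}^N(\sigma_2 x)(k) = e^{2u}$ (even case), so after dividing through, the required inequality is $\prod_{k=0}^{2n}(1+\tfrac{u}{k+1}) \leq 4^{2n+1} e^{2u}/e^{2u}$... — let me restate it correctly: we need $\prod_{k=0}^{N}(1+\tfrac{u}{k+1}) \le 4^{N+1}$ only after the $x(k)$-factors have been cancelled against $e^{2u}$, which they have NOT; rather $e^{u}\cdot \prod(1+\tfrac u{k+1}) \le 4^{N+1} e^{2u}$, i.e. $\prod_{k=0}^{2n}(1+\tfrac u{k+1}) \le 4^{2n+1}e^{u} = 2^{4n+2+u\log_2 e}$, and Lemma \ref{binomial} gives the bound $2^{2n+u+2}\le 2^{4n+2+u\log_2 e}$, which holds since $2n\ge 0$ and $u\le u\log_2 e$. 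Thus the genuine content is organizing the case split and the cancellation bookkeeping so that Lemma \ref{binomial} applies; the main obstacle is not analytic depth but getting the even/odd range-splitting and the exponent accounting exactly right so the constant comes out as $4$ and the sign condition $u\ge 0$ is correctly invoked from monotonicity of $x=\mu(x)$.
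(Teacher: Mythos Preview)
Your approach is exactly the paper's: split $N\in\{2n,2n+1\}$, apply Lemma~\ref{ocenka s}(b) for $k\le n$ and (a) for $n<k\le N$, factor the leading $x$-terms, and finish with Lemma~\ref{binomial}. The paper's bookkeeping is a bit cleaner---it keeps $s=\prod_{m=0}^n x(m)/x(n)^{n+1}$ explicit so the leading-factor product comes out as $(\prod_{k=0}^n x(k))^2\cdot s^{-1}$ (resp.\ $(\prod_{k=0}^{n-1} x(k))^2 x(n)\cdot s^{-1}$), and then Lemma~\ref{binomial} produces exactly a compensating factor bounded by $4^{N+1}s$; this avoids your mid-stream correction (your claim that the leading factors \emph{equal} $\prod_{k=0}^N(\sigma_2 x)(k)$ is false before normalization---they differ by $s^{-1}$---but your final inequality $e^{u}\prod(1+\tfrac{u}{k+1})\le 4^{N+1}e^{2u}$ is the right one and goes through as you say).
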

\begin{proof} Fix $n\geq0$ and denote
$$s=\frac{\prod_{m=0}^nx(m)}{x(n)^{n+1}}.$$
By Lemma \ref{ocenka s} \eqref{ocsa} and \eqref{ocsb}, we have
$$\prod_{k=0}^{2n+1}(\mathbf{S}x)(k)\leq\prod_{k=0}^nx(k)(1+\frac1{k+1}\log(s))\cdot\prod_{k=n+1}^{2n+1}x(n)(1+\frac1{k+1}\log(s))=$$
$$=(\prod_{k=0}^nx(k))^2\cdot s^{-1}\prod_{k=0}^{2n+1}(1+\frac1{k+1}\log(s))$$
and
$$\prod_{k=0}^{2n}(\mathbf{S}x)(k)\leq\prod_{k=0}^nx(k)(1+\frac1{k+1}\log(s))\cdot\prod_{k=n+1}^{2n}x(n)(1+\frac1{k+1}\log(s))=$$
$$=(\prod_{k=0}^{n-1}x(k))^2x(n)\cdot s^{-1}\prod_{k=0}^{2n}(1+\frac1{k+1}\log(s)).$$
Since $s\geq1,$ it follows from Lemma \ref{binomial} that
$$\prod_{k=0}^{2n+1}(1+\frac1{k+1}\log(s))\leq \prod_{k=0}^{2n+2}(1+\frac1{k+1}\log(s))\leq2^{2n+4+\log(s)}\leq 4^{2n+2}s$$
and
$$\prod_{k=0}^{2n}(1+\frac1{k+1}\log(s))\leq 2^{2n+2+\log(s)}\leq 4^{2n+1}s.$$
Therefore,
\begin{equation}\label{first eq}
\prod_{k=0}^{2n+1}(\mathbf{S}x)(k)\leq 4^{2n+2}(\prod_{k=0}^nx(k))^2=4^{2n+2}\prod_{k=0}^{2n+1}\mu(k,x\oplus x).
\end{equation}
and
\begin{equation}\label{second eq}
\prod_{k=0}^{2n}(\mathbf{S}x)(k)\leq 4^{2n+1}(\prod_{k=0}^{n-1}x(k))^2x(n)=4^{2n+1}\prod_{k=0}^{2n}\mu(k,x\oplus x).
\end{equation}
Since $n\geq0$ is arbitrary, the assertion follows by combining \eqref{first eq} and \eqref{second eq}.
\end{proof}

\section{Lidskii formula for traces on ideal closed with respect to the logarithmic submajorization}

In this section, we prove Theorems \ref{main commutator theorem} and \ref{extended kalton}.

The idea of the important estimate below belongs to Kalton (see Theorem 2.7 in \cite{Kalton1998}). We refer the reader to Chapter 5 in the book \cite{LSZ} for the detailed proof.

\begin{thm}\label{main quasinilpotent spectral estimate} If $Q$ is a compact quasi-nilpotent operator, then
$$|\sum_{|\lambda|>1,\lambda\in\sigma(\Re Q)}\lambda|\leq 400\sum_{|\lambda|>1,\lambda\in\sigma(2e|Q|)}\log(\lambda).$$
Here, $\Re Q$ is the real part of $Q.$ A similar assertion holds for the imaginary part $\Im Q.$
\end{thm}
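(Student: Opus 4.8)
Since the paper itself attributes this to Kalton and points to \cite{LSZ} for the details, the plan is to reconstruct that argument. The first reduction is to the finite-dimensional nilpotent case. Using Ringrose's triangularisation (cf.\ Theorem~\ref{ringrose theorem}), a compact quasi-nilpotent $Q$ is the operator-norm limit of its finite-rank compressions $Q_n=P_nQP_n$ along an increasing nest $P_n\uparrow 1$, and each $Q_n$ is nilpotent; moreover $\Re Q_n=P_n(\Re Q)P_n\to\Re Q$ in norm and $\mu(k,Q_n)\to\mu(k,Q)$ for every $k$. Since only finitely many eigenvalues of $\Re Q$ (resp.\ singular values of $Q$) exceed the relevant thresholds, both sides of the inequality behave continuously along this approximation (after the standard $\varepsilon$-perturbation of the thresholds), so it is enough to prove the estimate for a strictly upper-triangular matrix $Q$ on $\mathbb C^{N}$; write $A=\Re Q$.

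In the finite-dimensional nilpotent case one has the basic cancellation $\operatorname{Tr}(A)=\Re\operatorname{Tr}(Q)=0$. Consequently $\big|\sum_{|\lambda|>1,\ \lambda\in\sigma(A)}\lambda\big|=\big|\sum_{\lambda(A)>1}\lambda(A)-\sum_{\lambda(A)<-1}|\lambda(A)|\big|$ is the absolute value of a difference of two manifestly non-negative and finite quantities, hence does not exceed the larger of them; and since $-Q$ is again quasi-nilpotent with the same singular values, it suffices to bound $\sum_{\lambda(\Re Q)>1}\lambda(\Re Q)$ by $400\sum_{k:\,2e\mu(k,Q)>1}\log(2e\mu(k,Q))$ for every quasi-nilpotent $Q$. (One may also invoke here the classical majorisation of the real parts of the eigenvalues of $Q$, all equal to $0$, by the eigenvalues of $A$, which makes every decreasing partial sum $\sum_{k=0}^{m-1}\lambda(k,A)$ non-negative.)

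The remaining estimate is the heart of the matter and the step I expect to be the genuine obstacle, because quasi-nilpotency must be used quantitatively: for an arbitrary compact operator, eigenvalue sums of the real part are only controlled by partial sums of singular values, which destroys the logarithm entirely. The classical device (Kalton) is to replace $Q$ by a transform whose spectrum is $\{1\}$ — e.g.\ $e^{Q}$, which is unipotent with $\det e^{Q}=1$, or $(I-tQ)^{-1}$, noting $\det(I-tQ)\equiv 1$ — so that the corresponding exterior powers $\Lambda^{m}$ are again unipotent and the associated products of singular values are forced to equal $1$ (equivalently, the logarithmic singular-value sequences are balanced with non-negative partial sums). The delicate part is then to compare such a transform with $Q$ on each exterior power and track the constants, converting the truncated spectral mass of $\Re Q$ above level $1$ into the logarithmic singular-value sum of $Q$: the factor $2e$ is precisely what this comparison produces, and $400$ is the accumulated crude numerical constant. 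The same argument with $iQ$ in place of $Q$ gives the assertion for $\Im Q$. The full computation is carried out in \cite{LSZ}, Chapter~5.
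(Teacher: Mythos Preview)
The paper does not actually prove this theorem: it states the estimate, attributes the idea to Kalton (Theorem~2.7 in \cite{Kalton1998}), and refers the reader to Chapter~5 of \cite{LSZ} for a detailed argument. So there is no ``paper's own proof'' to compare against; your proposal already goes further than the paper by sketching the strategy.

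As a sketch, your outline is broadly faithful to the Kalton/LSZ approach. The reduction to finite-dimensional nilpotent compressions via a triangularising nest is correct (compactness gives norm convergence of $P_nQP_n$, and the diagonal of the triangular form vanishes for a quasi-nilpotent operator), and the passage from the two-sided sum $\sum_{|\lambda|>1}\lambda$ to the one-sided sum $\sum_{\lambda>1}\lambda$ via $Q\mapsto -Q$ is legitimate. The crucial step, however---converting the truncated spectral mass of $\Re Q$ into the logarithmic singular-value sum---you do not carry out; you describe a plausible mechanism (unipotent transforms such as $e^{Q}$ or $(I-tQ)^{-1}$, exterior powers, determinant constraints) and then defer to \cite{LSZ}. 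That is exactly what the paper does, so in that sense your treatment matches it. But as a self-contained proof it is incomplete: the quantitative comparison that produces the factor $2e$ and the constant $400$ is the entire content of the theorem, and your paragraph on it is heuristic rather than a proof. If the intent is to supply what the paper omits, the actual computation (in \cite{LSZ} it proceeds via a complex-analytic estimate on the regularised determinant, not quite via exterior powers as you suggest) still needs to be written out.
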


\begin{lem}\label{prefinal commutator lemma} For every quasi-nilpotent compact operator $Q\in\mathcal{L}(H),$ we have
$$|C\lambda(\Re Q)|\leq 200\mathbf{S}((2eQ)^{\oplus 2}).$$
A similar assertion holds for $\Im Q.$
\end{lem}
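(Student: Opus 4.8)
The plan is to prove the inequality coordinate by coordinate, deducing each instance of it from the operator estimate of Theorem \ref{main quasinilpotent spectral estimate} after rescaling $Q$. Fix $n\geq0$ and put $k=\lfloor n/2\rfloor$. First I would dispose of the degenerate case $\mu(k,Q)=0$: then $Q$ has rank at most $k$, hence is trace class with ${\rm Tr}(Q)=\sum_m\lambda(m,Q)=0$ by \eqref{original lidskii}, so $\Re Q$ is a finite rank self-adjoint operator with ${\rm Tr}(\Re Q)=\Re\,{\rm Tr}(Q)=0$; since \eqref{mu sum} gives $\mu(m,\Re Q)\leq\mu(\lfloor m/2\rfloor,Q)=0$ for $m\geq2k$, the operator $\Re Q$ has rank at most $2k\leq n$, so $\sum_{m=0}^n\lambda(m,\Re Q)={\rm Tr}(\Re Q)=0$, while $\mathbf{S}((2eQ)^{\oplus2})(n)=0$ because its $n$-th singular value vanishes (with the usual convention that $\mathbf{S}x(j)=0$ when $\mu(j,x)=0$); the asserted inequality then reads $0\leq0$.

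Assume henceforth $\mu(k,Q)>0$ and set $c=2e\mu(k,Q)$. Applying Theorem \ref{main quasinilpotent spectral estimate} to the compact quasi-nilpotent operator $c^{-1}Q$ and multiplying through by $c$, and using that (with multiplicities) $\sigma(\Re Q)\setminus\{0\}=\{\lambda(m,\Re Q)\}_m$ while $\sigma(2e|Q|)\setminus\{0\}=\{2e\mu(m,Q)\}_m$, I obtain
\[
\Big|\sum_{\mu(m,\Re Q)>c}\lambda(m,\Re Q)\Big|\ \leq\ 400\,c\sum_{2e\mu(m,Q)>c}\log\Big(\frac{2e\mu(m,Q)}{c}\Big).
\]
Here $2e\mu(m,Q)>c$ means $\mu(m,Q)>\mu(k,Q)$, and since the indices with $\mu(m,Q)=\mu(k,Q)$ contribute $0$, a direct evaluation shows the right-hand side equals $800e\,\mu(k,Q)\log s$, where $s=\big(\prod_{m=0}^k\mu(m,Q)\big)\mu(k,Q)^{-(k+1)}\geq1$.

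Next I would control the discarded part of the Cesaro mean. By \eqref{mu sum}, $\mu(m,\Re Q)\leq\mu(\lfloor m/2\rfloor,Q)$, so $\mu(m,\Re Q)>c\geq\mu(k,Q)$ forces $\lfloor m/2\rfloor<k$, i.e. $m\leq2k-1\leq n$; hence $\{m:\mu(m,\Re Q)>c\}\subseteq\{0,\dots,n\}$. Splitting $\sum_{m=0}^n\lambda(m,\Re Q)$ into this set and its complement in $\{0,\dots,n\}$ — the complement consisting of at most $n+1$ eigenvalues of modulus $\leq c$ — and dividing by $n+1$ yields
\[
|C\lambda(\Re Q)(n)|\ \leq\ 2e\mu(k,Q)+\frac{800e\,\mu(k,Q)\log s}{n+1}.
\]
It then remains to match this with the right-hand side of the lemma: by \eqref{s trans def}, $\mathbf{S}((2eQ)^{\oplus2})=\mathbf{S}(y)$ where $y=\mu((2eQ)^{\oplus2})=\sigma_2(2e\mu(Q))$ and $y(n)=2e\mu(k,Q)$, and a short computation (treating $n$ even and $n$ odd separately) gives $\big(\prod_{m=0}^n y(m)\big)y(n)^{-(n+1)}=s^2$ in both cases, so that $200\,\mathbf{S}((2eQ)^{\oplus2})(n)=400e\mu(k,Q)+800e\,\mu(k,Q)\log s/(n+1)$, which dominates the previous display because $2e\leq400e$. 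The statement for $\Im Q$ is obtained verbatim from the imaginary-part version of Theorem \ref{main quasinilpotent spectral estimate}.

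The only genuinely delicate point — the main obstacle — is the choice of the scaling parameter: it must be anchored to the singular values of $Q$, namely $c=2e\mu(\lfloor n/2\rfloor,Q)$, and not to the eigenvalues of $\Re Q$. This is precisely what makes both halves close at once: via \eqref{mu sum} it caps at $n$ the number of eigenvalues of $\Re Q$ exceeding $c$ (so the truncated tail of the Cesaro mean costs only $2e\mu(\lfloor n/2\rfloor,Q)$), while simultaneously the logarithmic sum on the right of Theorem \ref{main quasinilpotent spectral estimate} collapses exactly to the quantity $\log s$ built into $\mathbf{S}((2eQ)^{\oplus2})(n)$. Everything else is routine bookkeeping.
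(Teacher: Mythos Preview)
Your proof is correct and follows essentially the same route as the paper: rescale by $c=2e\mu(\lfloor n/2\rfloor,Q)=\mu(n,(2eQ)^{\oplus2})$, apply Theorem~\ref{main quasinilpotent spectral estimate}, split the Ces\`aro sum into the part with $|\lambda|>c$ and a tail of at most $n+1$ terms bounded by $c$, and identify the result with $\mathbf{S}((2eQ)^{\oplus2})(n)$. The only cosmetic difference is that the paper applies Theorem~\ref{main quasinilpotent spectral estimate} to the rescaled $Q^{\oplus2}$ (picking up a factor $\tfrac12$ on the left and summing $\log$'s of $\mu(k,(2eQ)^{\oplus2})$ directly), whereas you apply it to the rescaled $Q$ and then verify by a parity case-split that $\prod_{m=0}^n y(m)/y(n)^{n+1}=s^2$; both computations land on the same bound $800e\,\mu(\lfloor n/2\rfloor,Q)\log s$. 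You also treat the degenerate case $\mu(\lfloor n/2\rfloor,Q)=0$ explicitly, which the paper passes over.
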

\begin{proof} Since $\Re Q$ is self-adjoint, we infer from \eqref{mu sum} that, for every $n\geq0,$
$$|\lambda(n,\Re Q)|=\mu(n,\Re Q)=\mu(n,\frac12(Q+Q^*))\leq\mu(n,Q^{\oplus 2})\leq\mu(n,(2eQ)^{\oplus 2}).$$
Set
$$m(n)=\max\{m\geq0:\  |\lambda(m,\Re Q)|>\mu(n,2e(Q^{\oplus 2}))\},\quad n\geq0.$$
By the inequality above, we have $m(n)\leq n.$ Clearly,
$$\sigma(\Re Q)\cap\{\lambda:\ |\lambda|>\mu(n,2e(Q^{\oplus 2}))\}=\{\lambda(k,\Re Q)\}_{k=0}^{m(n)}$$
and
$$|\sum_{k=0}^n\lambda(k,\Re Q)|\leq\sum_{k=m(n)+1}^n|\lambda(k,\Re Q)|+|\sum_{k=0}^{m(n)}\lambda(k,\Re Q)|.$$
Therefore,
\begin{equation}\label{c est req}
|\sum_{k=0}^n\lambda(k,\Re Q)|\leq (n+1)\mu(n,(2eQ)^{\oplus 2})+|\sum_{\lambda\in\sigma(\Re Q),|\lambda|>\mu(n,(2eQ)^{\oplus 2})}\lambda|.
\end{equation}
In order to estimate the second summand at the right hand side of \eqref{c est req}, we define an operator $Q_0$ by setting
$$Q_0=\frac1{\mu(n,(2eQ)^{\oplus 2})}Q^{\oplus 2}.$$
It is clear that $\lambda(Q^{\oplus 2})=\sigma_2\lambda(Q)=0$ and, therefore, $Q_0$ is quasi-nilpotent operator. Applying Theorem \ref{main quasinilpotent spectral estimate}, we obtain
$$|\sum_{\lambda\in\sigma(\Re Q),|\lambda|>\mu(n,(2eQ)^{\oplus 2})}\lambda|=\frac12\mu(n,(2eQ)^{\oplus 2})|\sum_{\lambda\in\sigma(\Re Q_0),|\lambda|>1}\lambda|\leq$$
$$\leq200\mu(n,(2eQ)^{\oplus 2})\sum_{k=0}^n\log(2e\mu(k,Q_0))=200\mu(n,(2eQ)^{\oplus 2})\sum_{k=0}^n\log(\frac{\mu(k,(2eQ)^{\oplus 2})}{\mu(n,(2eQ)^{\oplus 2})}).$$
Using the latter estimate together with \eqref{c est req}, we obtain
$$|\sum_{k=0}^n\lambda(k,\Re Q)|\leq (n+1)\mu(n,(2eQ)^{\oplus 2})+200\mu(n,(2eQ)^{\oplus 2})\log(\frac{\prod_{k=0}^n\mu(k,(2eQ)^{\oplus 2})}{\mu(n,(2eQ)^{\oplus 2})^{n+1}}).$$
Dividing both sides by $(n+1)$ and appealing to the definition of $\mathbf{S}$ given in \eqref{s trans def} yields the assertion.
\end{proof}

The following proposition is the key to the proofs of Theorems \ref{main commutator theorem} and \ref{extended kalton}. Its proof crucially depends on Theorem \ref{hardest estimate}.

\begin{prop}\label{geom main estimate} For every quasi-nilpotent compact operator $Q\in\mathcal{L}(H),$ we have
$$C\lambda(\Re Q)\prec\prec_{\log} (1600eQ)^{\oplus 4},\quad C\lambda(\Im Q)\prec\prec_{\log} (1600eQ)^{\oplus 4}.$$
\end{prop}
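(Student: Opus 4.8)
The plan is to combine the pointwise estimate of Lemma \ref{prefinal commutator lemma} with the logarithmic submajorization bound of Theorem \ref{hardest estimate}, using the fact that logarithmic submajorization is transitive and behaves well under the operations $x\mapsto x^{\oplus n}$ and scalar multiplication.

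First I would record the auxiliary observations. Logarithmic submajorization is transitive: if $B\prec\prec_{\log}A$ and $A\prec\prec_{\log}C$ then $B\prec\prec_{\log}C$ (immediate from the definition). It is compatible with dilations: $B\prec\prec_{\log}A$ implies $B^{\oplus n}\prec\prec_{\log}A^{\oplus n}$, since $\mu(B^{\oplus n})=\sigma_n\mu(B)$ and the partial products of $\sigma_n\mu(B)$ at index $nk-1$ (resp.\ the interlacing values) are controlled by those of $\sigma_n\mu(A)$; more simply one checks the partial-product inequality for $\sigma_n$ directly. And if $0\le b\le a$ pointwise (after rearrangement $\mu(b)\le\mu(a)$) then $b\prec\prec_{\log}a$; in particular $|C\lambda(\Re Q)|\le 200\,\mathbf{S}((2eQ)^{\oplus 2})$ gives $C\lambda(\Re Q)\prec\prec_{\log}200\,\mathbf{S}((2eQ)^{\oplus 2})$, using also that $\mu(C\lambda(\Re Q))$ is bounded by $\mu$ of the right-hand side (the right-hand side is already decreasing by Lemma \ref{s dec}).

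Then the main computation is a short chain. Starting from Lemma \ref{prefinal commutator lemma},
\[
C\lambda(\Re Q)\prec\prec_{\log}200\,\mathbf{S}((2eQ)^{\oplus 2}).
\]
Apply Theorem \ref{hardest estimate} with $x=\mu((2eQ)^{\oplus 2})$: since $\mathbf{S}$ is homogeneous and $\mathbf{S}x=\mu(\mathbf{S}x)$,
\[
200\,\mathbf{S}((2eQ)^{\oplus 2})=\mathbf{S}\big((200\cdot2eQ)^{\oplus 2}\big)\prec\prec_{\log}4\big((400eQ)^{\oplus 2}\oplus(400eQ)^{\oplus 2}\big)=(1600eQ)^{\oplus 4},
\]
where I have pulled the scalar $200$ inside $\mathbf{S}$ by homogeneity, then the factor $4$ inside the $\oplus$ by homogeneity of $\mu$, and used $x\oplus x=(2eQ\cdot200\cdots)^{\oplus 2}\oplus(\cdots)^{\oplus 2}=(\cdots)^{\oplus 4}$ after absorbing constants, so that the total scalar is $4\cdot400e=1600e$. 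By transitivity of $\prec\prec_{\log}$ this yields $C\lambda(\Re Q)\prec\prec_{\log}(1600eQ)^{\oplus 4}$, and the identical argument with $\Im Q$ in place of $\Re Q$ (the "similar assertion" clause of Lemma \ref{prefinal commutator lemma}) gives the second inequality.

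The only genuine subtlety — and the step I would be most careful about — is the bookkeeping of constants and of the $\oplus$ multiplicities when moving scalars through $\mathbf{S}$ and through the direct sums, making sure the homogeneity is applied to the right argument and that $(x\oplus x)$ with $x=y^{\oplus 2}$ really is $y^{\oplus 4}$; everything else is a mechanical application of transitivity and of the two cited results, so no essential obstacle remains.
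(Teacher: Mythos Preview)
Your proposal is correct and is exactly the approach the paper takes: the paper's entire proof reads ``The assertion follows from consecutive application of Lemma \ref{prefinal commutator lemma} and Theorem \ref{hardest estimate},'' and you have simply made explicit the constant and direct-sum bookkeeping that this one sentence leaves to the reader. Your verification that $\mathbf{S}$ is homogeneous, that the pointwise bound of Lemma \ref{prefinal commutator lemma} (with the right-hand side decreasing by Lemma \ref{s dec}) yields $\mu(C\lambda(\Re Q))\le 200\,\mathbf{S}((2eQ)^{\oplus 2})$ and hence logarithmic submajorization, and that $4\cdot(400eQ)^{\oplus 4}=(1600eQ)^{\oplus 4}$, are all accurate.
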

\begin{proof} The assertion follows from consecutive application of Lemma \ref{prefinal commutator lemma} and Theorem \ref{hardest estimate}.
\end{proof}

We are now ready to the prove Theorems \ref{main commutator theorem} and \ref{extended kalton}.

\begin{proof}[of Theorem \ref{main commutator theorem}] Fix an operator $T\in\mathcal{I}.$ By Theorem \ref{ringrose theorem}, $T=N+Q,$ where $N$ is normal, $\lambda(N)=\lambda(T)$ and $Q$ is quasi-nilpotent. It follows from \eqref{lambda logmaj mu} that $\mu(N)=|\lambda(T)|\prec\prec_{\log} T.$ Since $\mathcal{I}$ is closed with respect to the logarithmic submajorization, it follows that $N\in\mathcal{I}.$ Thus, $Q=T-N$ also belongs to $\mathcal{I},$ and so does the operator $Q^{\oplus 4}.$

Since $\mathcal{I}$ is closed with respect to the logarithmic submajorization, it follows from Proposition \ref{geom main estimate} that $C\lambda(\Re Q)\in\mathcal{I}$ and $C\lambda(\Im Q)\in\mathcal{I}.$ It follows from Theorem \ref{normal commutator} that both $\Re Q\in{\rm Com}(\mathcal{I})$ and $\Im Q\in{\rm Com}(\mathcal{I}).$ Hence, $Q\in{\rm Com}(\mathcal{I}).$

The preceding paragraph yields the following conclusion: the operators $T$ and $N$ simultaneously belong or do not belong to ${\rm Com}(\mathcal{I}).$ By Theorem \ref{normal commutator}, the normal operator $N$ belongs to ${\rm Com}(\mathcal{I})$ if and only if $C\lambda(N)\in\mathcal{I}.$ Since $\lambda(N)=\lambda(T)$ (by construction), the assertion follows.
\end{proof}

\begin{proof}[of Theorem \ref{extended kalton}] Fix an operator $T\in\mathcal{I}.$ By Theorem \ref{ringrose theorem}, $T=N+Q,$ where $N$ is normal, $\lambda(N)=\lambda(T)$ and $Q$ is quasi-nilpotent. Repeating the argument in the proof of Theorem \ref{main commutator theorem}, we infer that $N\in\mathcal{I}$ and that $Q\in{\rm Com}(\mathcal{I}).$ It follows from the definition of trace that $\varphi(Q)=0.$ Hence, $\varphi(T)=\varphi(N).$ Since $N$ is normal, there exists an isometry $U\in\mathcal{L}(H)$ such that $N=U\lambda(N)U^*$ and $U^*U=1.$ Therefore, the operator
$$N-\lambda(N)=U\lambda(N)\cdot U^*-U^*\cdot U\lambda(N)\in{\rm Com}(\mathcal{I}).$$
It follows that $\varphi(N)=\varphi(\lambda(N))$ and, therefore, $\varphi(T)=\varphi(\lambda(T)).$ This proves \eqref{kalton first}.

The assertion of \eqref{kalton second} follows from Lemma \ref{dk prop11}.
\end{proof}

\section{Lidskii formula vs logarithmic submajorization}

The main aim of this section is to furnish the proof of Theorem \ref{main theorem}. The following definition provides an extension of a positive trace $\varphi$ given originally on the ideal $\mathcal{I}$ to the positive cone of the algebra $\mathcal{L}(H).$ We emphasize that $\varphi(A)$ does not have to be finite for a particular $0\leq A\in\mathcal{L}(H).$ In Theorem \ref{main theorem}, this obstacle is overcome by exploiting monotonicity of our trace with respect to the logarithmic submajorization.

\begin{defi}\label{extension def} Let $\mathcal{I}$ be an ideal in $\mathcal{L}(H)$ and let $\varphi$ be a positive trace on $\mathcal{I}.$ For every positive $A\in\mathcal{L}(H),$ set
\begin{equation}\label{extension eq}
\varphi(A)=\sup\{\varphi(B):\ 0\leq B\leq A,\ B\in\mathcal{I}\}.
\end{equation}
\end{defi}

The next lemma shows that in the Definition \ref{extension def}, the standard order can be replaced with the uniform submajorization.

\begin{lem}\label{ext maj} Let $\mathcal{I}$ be an ideal in $\mathcal{L}(H)$ and let $\varphi$ be a positive trace on $\mathcal{I}.$ For every positive $A\in\mathcal{L}(H),$ we have
$$\varphi(A)=\sup\{\varphi(B):\ 0\leq B\lhd A,\ B\in\mathcal{I}\}.$$
\end{lem}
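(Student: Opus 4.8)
\textbf{Proof plan for Lemma \ref{ext maj}.}

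The plan is to prove the two inequalities separately. Write $\psi(A)$ for the supremum on the right-hand side, so the goal is $\varphi(A)=\psi(A)$ for every $0\leq A\in\mathcal{L}(H)$. One direction is cheap: if $0\leq B\leq A$ with $B\in\mathcal{I}$, then $\mu(B)\leq\mu(A)$ and hence $B\lhd A$ by the remark following Definition \ref{uniform majorization def}. Therefore every competitor in the supremum defining $\varphi(A)$ (via Definition \ref{extension def}) is also a competitor in the supremum defining $\psi(A)$, giving $\varphi(A)\leq\psi(A)$ immediately.

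For the reverse inequality $\psi(A)\leq\varphi(A)$, I would take an arbitrary $0\leq B\in\mathcal{I}$ with $B\lhd A$ and show $\varphi(B)\leq\varphi(A)$. Fix $\varepsilon\in(0,1)$. By Theorem \ref{ks majorization theorem}(b), the element $(1-\varepsilon)\mu(B)$ lies in the convex hull of $\{0\leq z\in l_\infty:\ \mu(z)\leq\mu(A)\}$, so there are finitely many $z_k$ with $\mu(z_k)\leq\mu(A)$ and convex weights $\lambda_k$ with $(1-\varepsilon)\mu(B)\leq\sum_k\lambda_k z_k$. Since $\mu(z_k)\leq\mu(A)$, one can choose positive operators $B_k\leq A$ with $\mu(B_k)=\mu(z_k)$ (diagonalise $A$ and truncate/rescale along its eigenvectors); then $B_k\in\mathcal{I}$ because $\mu(B_k)\leq\mu(A)$ and $A\in$ the ideal generated — more precisely $B_k$ is dominated by $A$ which may not be in $\mathcal{I}$, so instead one should first intersect with a large finite-rank projection, or simply note $B_k\in\mathcal{I}$ is not needed: what is needed is $\varphi(z_k)\leq\varphi(A)$, which holds by Definition \ref{extension def} since $z_k\in l_\infty$ with $\mu(z_k)\leq\mu(A)$ means $z_k$ (as a diagonal operator) satisfies $0\leq z_k\leq A'$ for a unitary rotate... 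The clean route is: by Lemma \ref{trace monotone} applied within the ideal $\mathcal{I}$ after truncating, combined with Lemma \ref{trace monotone lhd}, we get $\varphi(\mu(B))=\varphi(B)$ and $\varphi(z_k\wedge$ (finite rank)$)\leq\varphi(A)$; passing to the supremum over finite truncations gives $\varphi(z_k)\leq\varphi(A)$ in the extended sense of Definition \ref{extension def}. Hence $(1-\varepsilon)\varphi(B)=(1-\varepsilon)\varphi(\mu(B))\leq\sum_k\lambda_k\varphi(z_k)\leq\varphi(A)$, and letting $\varepsilon\to0$ yields $\varphi(B)\leq\varphi(A)$. Taking the supremum over all such $B$ gives $\psi(A)\leq\varphi(A)$.

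The main obstacle is bookkeeping around finiteness: the extended $\varphi$ of Definition \ref{extension def} may take the value $+\infty$, and the operators $z_k$ or $B_k$ produced by the convexity argument need not a priori lie in $\mathcal{I}$, so one cannot directly apply Lemma \ref{trace monotone lhd} (which is stated for $A,B\in\mathcal{I}$). The fix is to run the whole convexity argument after compressing by a finite-rank spectral projection $P$ of $A$: replace $A$ by $PAP\in\mathcal{I}$, get the bound $\varphi(P B P)\leq\varphi(PAP)\leq\varphi(A)$ with all operators now genuinely in $\mathcal{I}$, and then let $P\uparrow 1$. Since $\mu(PBP)\to\mu(B)$ appropriately and $\varphi$ is defined as a supremum, $\sup_P\varphi(PBP)=\varphi(B)$ (using Lemma \ref{trace monotone} and that finite-rank truncations of $B$ are cofinal in $\{0\leq B'\leq B\}$), which closes the argument. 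Everything else is a routine combination of Theorem \ref{ks majorization theorem}, Lemma \ref{trace monotone} and Lemma \ref{trace monotone lhd}.
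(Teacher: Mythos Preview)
Your overall strategy matches the paper's: invoke Theorem \ref{ks majorization theorem}(b) to write $(1-\varepsilon)\mu(B)$ as a convex combination $\sum_k \lambda_k z_k$ with $\mu(z_k)\leq\mu(A)$, then bound each $\varphi(z_k)$ by $\varphi(A)$. But you miss the one-line observation that dissolves your ``main obstacle'': convex-hull membership gives the \emph{equality} $(1-\varepsilon)\mu(B)=\sum_k \lambda_k z_k$ (not merely $\leq$), and since each $z_k\geq 0$ and, without loss of generality, $\lambda_k>0$, this forces $z_k\leq\lambda_k^{-1}\mu(B)\in\mathcal{I}$. So $z_k\in\mathcal{I}$ automatically, and $\varphi(z_k)\leq\varphi(A)$ then follows directly from Definition \ref{extension def}. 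This is exactly the paper's argument.

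Your proposed finite-rank workaround, by contrast, does not work. First, $B\lhd A$ does not imply $PBP\lhd PAP$ for a spectral projection $P$ of $A$, so one cannot simply compress the uniform submajorization. Second, and decisively, the claim $\sup_P\varphi(PBP)=\varphi(B)$ is false in general: whenever $\mathcal{I}\not\subset\mathcal{L}_1$ every positive trace on $\mathcal{I}$ is singular (Lemma \ref{singular}), so $\varphi(PBP)=0$ for every finite-rank $P$, while $\varphi(B)$ need not vanish. The bookkeeping concern you raise is genuine, but its resolution is the algebraic observation above, not an approximation by finite ranks.
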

\begin{proof} Fix $\varepsilon>0.$ It follows from Theorem \ref{ks majorization theorem} that there exist positive sequences $z_k\in l_{\infty},$ $1\leq k\leq n,$ and positive constants $\lambda_k,$ $1\leq k\leq n,$  such that $\mu(z_k)\leq\mu(A)$ and
$$(1-\varepsilon)\mu(B)=\sum_{k=1}^n\lambda_kz_k,\quad\sum_{k=1}^n\lambda_k=1.$$
Without loss of generality, $\lambda_k>0$ for all $1\leq k\leq n.$ It follows that $z_k\leq\lambda_k^{-1}\mu(B)$ and, therefore, $z_k\in\mathcal{I}$ for all $1\leq k\leq n.$ By Definition \ref{extension def}, we have $\varphi(z_k)\leq\varphi(A).$ Hence,
$$(1-\varepsilon)\varphi(B)=\sum_{k=1}^n\lambda_k\varphi(z_k)\leq\varphi(A).$$
Since $\varepsilon>0$ is arbitrarily small, the assertion follows.
\end{proof}

The following lemma shows that the extension of $\varphi$ given in Definition \ref{extension def} is additive with respect to the direct sum operation.

\begin{lem}\label{direct sum} Let $\mathcal{I}$ be an ideal in $\mathcal{L}(H)$ and let $\varphi$ be a positive trace on $\mathcal{I}.$ For every positive operators $A_1,A_2\in\mathcal{L}(H),$ we have
$$\varphi(A_1\oplus A_2)=\varphi(A_1)+\varphi(A_2).$$
\end{lem}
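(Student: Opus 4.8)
The plan is to prove the two inequalities $\varphi(A_1\oplus A_2)\le\varphi(A_1)+\varphi(A_2)$ and $\varphi(A_1\oplus A_2)\ge\varphi(A_1)+\varphi(A_2)$ separately, working throughout with the extension defined in Definition \ref{extension def} and exploiting Lemma \ref{ext maj} to replace the order $\le$ by uniform submajorization $\lhd$ whenever convenient.

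The easy inequality is $\varphi(A_1)+\varphi(A_2)\le\varphi(A_1\oplus A_2)$. Indeed, if $0\le B_i\le A_i$ with $B_i\in\mathcal I$ for $i=1,2$, then $B_1\oplus B_2\in\mathcal I$ and $0\le B_1\oplus B_2\le A_1\oplus A_2$, so by Definition \ref{extension def} we get $\varphi(B_1)+\varphi(B_2)=\varphi(B_1\oplus B_2)\le\varphi(A_1\oplus A_2)$, where the first equality uses that $\varphi$ is additive on $\mathcal I$ and vanishes on commutators so that $\varphi(B_1\oplus B_2)=\varphi(B_1)+\varphi(B_2)$ (this is the standard trace identity for block operators, which holds on $\mathcal I$ itself). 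Taking the supremum over admissible $B_1$ and $B_2$ gives the claim; note this also settles the finite case and shows $\varphi(A_1\oplus A_2)=\infty$ whenever either $\varphi(A_i)=\infty$, so in the reverse inequality we may assume both $\varphi(A_i)$ are finite.

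For the reverse inequality $\varphi(A_1\oplus A_2)\le\varphi(A_1)+\varphi(A_2)$, take any $0\le B\in\mathcal I$ with $B\le A_1\oplus A_2$; I want to bound $\varphi(B)$ by $\varphi(A_1)+\varphi(A_2)$. The natural move is to compare $\mu(B)$ with $\mu(A_1)$ and $\mu(A_2)$: since $B\le A_1\oplus A_2$ we have $\mu(B)\le\mu(A_1\oplus A_2)=\mu(\mu(A_1)\oplus\mu(A_2))$, and the decreasing rearrangement of $\mu(A_1)\oplus\mu(A_2)$ satisfies $\mu(k,A_1\oplus A_2)\le\mu(\lfloor k/2\rfloor,A_1)+\mu(\lfloor k/2\rfloor,A_2)$ — more cleanly, one has the submajorization-type inequality $B\lhd$ (or at least $B\prec\prec$) a suitable combination of dilations of $A_1$ and $A_2$, in the spirit of \eqref{uniform maj sum}. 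The cleanest route: decompose $B=B\chi_1+B\chi_2$ where $\chi_i$ are the two coordinate projections onto the summands; then $B\chi_i\le A_i$, but $B\chi_i$ need not be positive, so instead use that $B$ itself, as a positive operator dominated by $A_1\oplus A_2$, can be written via its diagonal blocks $B_{11}\le A_1$, $B_{22}\le A_2$ (positivity of $B$ forces $0\le B_{ii}\le A_i$), and $B\prec\prec B_{11}\oplus B_{22}$ since the singular values of a positive block matrix are submajorized by those of its diagonal. Then $\mu(B)\le\mu$ of something uniformly submajorized by $\mu(A_1)+\mu(A_2)$ after a dilation; applying Lemma \ref{ext maj} together with Lemma \ref{trace monotone lhd} yields $\varphi(B)\le\varphi(B_{11})+\varphi(B_{22})\le\varphi(A_1)+\varphi(A_2)$, and taking the supremum over $B$ finishes the proof.

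The main obstacle is the reverse inequality, and specifically controlling $\varphi(B)$ for a general positive $B\in\mathcal I$ with $B\le A_1\oplus A_2$ \emph{without} assuming $B$ is block-diagonal. The key technical fact needed is that for a positive operator $B$ with block form $\begin{pmatrix}B_{11}&B_{12}\\B_{12}^*&B_{22}\end{pmatrix}$ one has $\mu(B)\le\sigma_2(\mu(B_{11})+\mu(B_{22}))$ (or an equivalent submajorization/uniform-submajorization statement), which combined with $0\le B_{ii}\le A_i$ and Lemma \ref{trace monotone lhd} gives the bound; this is where I would need to be careful, invoking \eqref{mu sum} or \eqref{uniform maj sum} applied to $B=P_1BP_1+P_2BP_2+(\text{off-diagonal})$ and absorbing the off-diagonal contribution, or more simply using that $0\le B\le B_{11}\oplus B_{22}+ (B-B_{11}\oplus B_{22})$ with the second term having singular values controlled by those of the $B_{ii}$. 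Once that comparison is in hand, the rest is a routine passage to the supremum.
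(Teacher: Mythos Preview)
Your argument for $\varphi(A_1)+\varphi(A_2)\le\varphi(A_1\oplus A_2)$ is fine and matches the paper. The reverse inequality, however, rests on a false claim: you assert $B\prec\prec B_{11}\oplus B_{22}$ (``the singular values of a positive block matrix are submajorized by those of its diagonal''), but pinching goes the other way. The map $B\mapsto B_{11}\oplus B_{22}=\tfrac12(B+UBU^{*})$ with $U=P_1-P_2$ is an average of unitary conjugates, so it is the diagonal that is submajorized by $B$, i.e.\ $B_{11}\oplus B_{22}\prec\prec B$. For $B=\bigl(\begin{smallmatrix}1&1\\1&1\end{smallmatrix}\bigr)$ one has $\mu(B)=(2,0)$ while $\mu(B_{11}\oplus B_{22})=(1,1)$, so your inequality fails already here. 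The best operator inequality in your direction is $B\le 2(B_{11}\oplus B_{22})$ (equivalent to positivity of $UBU^{*}$), and that unavoidable factor~$2$ means any purely singular-value route can deliver at most $\varphi(B)\le 2\bigl(\varphi(A_1)+\varphi(A_2)\bigr)$, not the sharp bound you need.

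The missing idea is that $B$ already lies in $\mathcal I$, so the trace property on $\mathcal I$ kills the off-diagonal blocks \emph{exactly}: with $P_1,P_2$ the coordinate projections one has $P_1BP_2\in\mathcal I$ and
\[
\varphi(P_1BP_2)=\varphi\bigl((P_1B)P_2\bigr)=\varphi\bigl(P_2(P_1B)\bigr)=\varphi(P_2P_1B)=0,
\]
and similarly $\varphi(P_2BP_1)=0$; hence $\varphi(B)=\varphi(B_{11})+\varphi(B_{22})$ on the nose. Your own observation $0\le B_{ii}=P_iBP_i\le P_i(A_1\oplus A_2)P_i$, i.e.\ $B_{ii}\le A_i$, then gives $\varphi(B)\le\varphi(A_1)+\varphi(A_2)$, and taking the supremum over $B$ finishes. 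This is exactly the paper's argument; it packages the vanishing of the off-diagonal via the unitary $U=p+i(1-p)$ rather than the cyclicity computation above, but the content is the same.
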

\begin{proof} If $B_1,B_2\in\mathcal{I}$ are such that $0\leq B_1\leq A_1$ and $0\leq B_2\leq A_2,$ then $0\leq B_1\oplus B_2\leq A_1\oplus A_2.$ Hence,
$$\sup\{\varphi(B_1\oplus B_2):\ 0\leq B_1\leq A_1,\ 0\leq B_2\leq A_2,\ B_1,B_2\in\mathcal{I}\}\leq$$
$$\leq\sup\{\varphi(B):\ 0\leq B\leq A_1\oplus A_2,\ B\in\mathcal{I}\}.$$
It follows from Definition \ref{extension def} that
\begin{equation}\label{fi plus diskoint1}
\varphi(A_1)+\varphi(A_2)\leq\varphi(A_1\oplus A_2).
\end{equation}

In order to prove the converse inequality, let $B\in\mathcal{I}$ be such that $0\leq B\leq A_1\oplus A_2.$ Let $p$ be the support projection of $A_1\oplus 0$ and let $U=p+i(1-p).$ Since $U$ is unitary, it follows that $\varphi(C)=\varphi(U^{-1}CU)$ for every operator $C\in\mathcal{I}.$ Note that
$$Bp-U^{-1}(Bp)U=Bp-U^{-1}Bp=(1+i)(1-p)Bp.$$
Therefore, $\varphi((1-p)Bp)=0$ and, similarly, $\varphi(pB(1-p))=0.$ Hence,
$$\varphi(B)=\varphi(pBp)+\varphi((1-p)B(1-p)).$$
On the other hand, we have
$$pBp\leq p(A_1\oplus A_2)p=A_1\oplus 0,\quad(1-p)B(1-p)\leq(1-p)(A_1\oplus A_2)(1-p)=0\oplus A_2.$$
Setting $B_1=pBp$ and $B_2=(1-p)B(1-p),$ we have $\varphi(B)=\varphi(B_1+B_2)$ and $0\leq B_1\leq A_1\oplus0,$ $0\leq B_2\leq 0\oplus A_2.$ Therefore,
$$\varphi(A)=\sup\{\varphi(B):\ 0\leq B\leq A_1\oplus A_2,\ B\in\mathcal{I}\}\leq$$
$$\leq\sup\{\varphi(B_1)+\varphi(B_2):\ 0\leq B_1\leq A_1\oplus 0,\ 0\leq B_2\leq 0\oplus A_2,\ B_1,B_2\in\mathcal{I}\}=$$
$$=\sup\{\varphi(B_1):\ 0\leq B_1\leq A_1,\ B_1\in\mathcal{I}\}+\sup\{\varphi(B_2):\ 0\leq B_2\leq A_2,\ B_2\in\mathcal{I}\}.$$
Thus, by Definition \ref{extension def}, we have
\begin{equation}\label{fi plus diskoint2}
\varphi(A_1\oplus A_2)\geq\varphi(A_1)+\varphi(A_2).
\end{equation}
The assertion follows from \eqref{fi plus diskoint1} and \eqref{fi plus diskoint2}.
\end{proof}

Finally, we are prepared to show that the extension of a positive trace $\varphi$ is a trace on every ideal $\mathcal{J}$ on which this extension takes finite values.

\begin{prop}\label{extension lemma} Let $\mathcal{I}$ be an ideal in $\mathcal{L}(H)$ and let $\varphi$ be a positive trace on $\mathcal{I}.$ If an ideal $\mathcal{J}\supset\mathcal{I}$ is such that $\varphi(A)<\infty$ for every $0\leq A\in\mathcal{J}$ (where $\varphi(A)$ is as in Definition \ref{extension def}), then $\varphi$ is a positive trace on $\mathcal{J}.$
\end{prop}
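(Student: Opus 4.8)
The plan is to verify the three defining properties of a positive trace on $\mathcal{J}$: additivity on the positive cone, positive homogeneity, and unitary invariance (which, together with linearity, gives the trace property by the argument already used in Lemma \ref{trace monotone}). Positivity is immediate from \eqref{extension eq}, and positive homogeneity $\varphi(tA)=t\varphi(A)$ for $t\geq0$ is routine since $B\mapsto tB$ is an order isomorphism of the positive cone. So the substance is additivity, and then extending the functional from the positive cone to all of $\mathcal{J}$ by linearity in a well-defined way.

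First I would prove additivity: for $0\leq A_1,A_2\in\mathcal{J}$, show $\varphi(A_1+A_2)=\varphi(A_1)+\varphi(A_2)$. The inequality $\varphi(A_1)+\varphi(A_2)\leq\varphi(A_1+A_2)$ follows because if $0\leq B_i\leq A_i$ with $B_i\in\mathcal{I}$, then $0\leq B_1+B_2\leq A_1+A_2$ and $B_1+B_2\in\mathcal{I}$, while $\varphi(B_1+B_2)=\varphi(B_1)+\varphi(B_2)$ by linearity of $\varphi$ on $\mathcal{I}$; take the supremum. For the reverse inequality the natural route is to pass through the direct sum: by \eqref{mu sum} and its companion lower bound, $\mu(A_1+A_2)\leq\sigma_2(\mu(A_1)+\mu(A_2))=\mu((A_1\oplus A_2)^{\oplus?})$ — more precisely $A_1+A_2\prec\prec \mu(A_1)+\mu(A_2)$ and the latter is the singular value sequence of $A_1\oplus A_2$, so $A_1+A_2\lhd A_1\oplus A_2$ by Definition \ref{uniform majorization def}. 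Applying Lemma \ref{ext maj} together with Lemma \ref{direct sum} then gives $\varphi(A_1+A_2)\leq\varphi(A_1\oplus A_2)=\varphi(A_1)+\varphi(A_2)$. (One must check $A_1\oplus A_2\in\mathcal{J}$ so that $\varphi(A_1\oplus A_2)<\infty$; this holds because $\mathcal{J}$ is an ideal and $\mu(A_1\oplus A_2)\leq\sigma_2\mu(A_1+A_2+\text{rank-one})$-type estimates, or simply because $A_1\oplus A_2$ has singular values dominated by a rearrangement of those of $A_1$ and $A_2$, hence lies in $\mathcal{J}$.)

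Once additivity and homogeneity on the positive cone are in hand, extend $\varphi$ to all self-adjoint elements by $\varphi(A)=\varphi(A_+)-\varphi(A_-)$ and then to $\mathcal{J}$ by complexification; additivity on the cone is exactly what makes this well-defined and linear (the standard Jordan-decomposition argument). Finally, for the trace property: for $0\leq A\in\mathcal{J}$ and unitary $U$, the map $B\mapsto U^{-1}BU$ is an order isomorphism of $\{B\in\mathcal{I}:0\leq B\leq A\}$ onto $\{B\in\mathcal{I}:0\leq B\leq U^{-1}AU\}$ preserving $\varphi$ (since $\varphi$ is already a trace on $\mathcal{I}$), so $\varphi(U^{-1}AU)=\varphi(A)$; by linearity this holds for all $A\in\mathcal{J}$, and the four-unitaries argument from the proof of Lemma \ref{trace monotone} upgrades unitary invariance to $\varphi(AB)=\varphi(BA)$ for $A\in\mathcal{J}$, $B\in\mathcal{L}(H)$.

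The main obstacle is the reverse additivity inequality $\varphi(A_1+A_2)\leq\varphi(A_1)+\varphi(A_2)$: a sum $B\leq A_1+A_2$ need not split as $B_1+B_2$ with $B_i\leq A_i$, so one genuinely needs the submajorization machinery (Lemma \ref{ext maj}, Theorem \ref{ks majorization theorem}, and the direct-sum additivity of Lemma \ref{direct sum}) rather than an elementary order argument. A secondary technical point is confirming that every relevant operator ($A_1\oplus A_2$, the Jordan components $A_\pm$, etc.) lies in $\mathcal{J}$ so that the extended $\varphi$ stays finite throughout — this is where the hypothesis that $\mathcal{J}$ is an ideal with $\varphi$ finite on its positive cone is used repeatedly.
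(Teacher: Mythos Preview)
Your overall strategy matches the paper's, and the easy inequality $\varphi(A_1)+\varphi(A_2)\le\varphi(A_1+A_2)$, the linear extension via the Jordan decomposition, and the unitary-invariance argument are all fine (the paper compresses the last two steps into a single appeal to Lemma~\ref{trace monotone}, but your route is equivalent). The problem is the reverse additivity inequality.

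You write that ``$\mu(A_1)+\mu(A_2)$ is the singular value sequence of $A_1\oplus A_2$'' and conclude $A_1+A_2\lhd A_1\oplus A_2$. Both assertions are false. The sequence $\mu(A_1\oplus A_2)$ is the decreasing rearrangement of the \emph{union} of the two singular-value sequences, not their pointwise sum. And $A_1+A_2\lhd A_1\oplus A_2$ already fails when $A_1=A_2$ is a rank-one projection: then $\mu(0,A_1+A_2)=2>1=\mu(0,A_1\oplus A_2)$, so even Hardy--Littlewood submajorization fails. Consequently your application of Lemma~\ref{ext maj} at this step is unjustified.

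The paper repairs this by inserting the operator $2\sigma_{1/2}$. Applying \eqref{uniform maj sum} once to $A_1,A_2$ and once to $A_1\oplus0,\,0\oplus A_2$, and combining, one gets
\[
A_1+A_2\ \lhd\ \mu(A_1)+\mu(A_2)\ \lhd\ 2\sigma_{1/2}\mu(A_1\oplus A_2),
\qquad
A_1\oplus A_2\ \lhd\ 2\sigma_{1/2}\mu(A_1+A_2).
\]
Lemma~\ref{ext maj} then yields $\varphi(A_1+A_2)\le\varphi\bigl(2\sigma_{1/2}\mu(A_1\oplus A_2)\bigr)$ and $\varphi(A_1\oplus A_2)\le\varphi\bigl(2\sigma_{1/2}\mu(A_1+A_2)\bigr)$, and the identity $\varphi\bigl(2\sigma_{1/2}\mu(C)\bigr)=\varphi(C)$ (which follows from Lemma~\ref{ext maj} and Lemma~\ref{direct sum}, using $\sigma_2(2\sigma_{1/2}\mu(C))\lhd 2\mu(C)$ and $\mu(C)\lhd 2\sigma_{1/2}\mu(C)$) closes the loop: $\varphi(A_1+A_2)=\varphi(A_1\oplus A_2)=\varphi(A_1)+\varphi(A_2)$. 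Once you replace your incorrect submajorization claim with this $2\sigma_{1/2}$ step, the rest of your argument goes through.
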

\begin{proof} Let $0\leq A_1,A_2\in\mathcal{J}.$ Applying the inequality \eqref{uniform maj sum} to the positive operators $A_1\oplus 0$ and $0\oplus A_2$ (and, separately, to the positive operators $A_1,A_2$) we obtain
$$A_1\oplus A_2\lhd\mu(A_1)+\mu(A_2)\lhd 2\sigma_{1/2}\mu(A_1+A_2)$$
and
$$A_1+A_2\lhd\mu(A_1)+\mu(A_2)\lhd2\sigma_{1/2}\mu(A_1\oplus A_2).$$
By Lemma \ref{ext maj}, we have
$$\varphi(A_1\oplus A_2)\leq\varphi(2\sigma_{1/2}\mu(A_1+A_2))=\varphi(A_1+A_2)$$
and
$$\varphi(A_1+A_2)\leq\varphi(2\sigma_{1/2}\mu(A_1\oplus A_2))=\varphi(A_1\oplus A_2).$$
It follows now from Lemma \ref{direct sum} that
$$\varphi(A_1+A_2)=\varphi(A_1\oplus A_2)=\varphi(A_1)+\varphi(A_2).$$

Hence, $\varphi$ extends to a positive linear functional on $\mathcal{J}.$ By Lemma \ref{trace monotone}, this functional is a trace.
\end{proof}

The following inequalities, though important {\it per se}, play a crucial role in the proof of the subsequent Proposition \ref{geom envelope lemma}.

\begin{lem}\label{sum lessdot} Let $A_k,B_k\in\mathcal{L}(H)$ $k=1,2,$ be such that $B_k\prec\prec_{\log} A_k.$ We have
\begin{enumerate}[(a)]
\item\label{suma} $$B_1\oplus B_2\prec\prec_{\log} A_1\oplus A_2.$$
\item\label{sumb} $$B_1+B_2\prec\prec_{\log} 2(A_1\oplus A_2)^{\oplus 2}.$$
\end{enumerate}
\end{lem}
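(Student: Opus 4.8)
The plan is to establish (a) directly and then to derive (b) from (a) together with the inequality \eqref{mu sum}.

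For (a) the key is a combinatorial formula for the partial products of $\mu(X_1\oplus X_2)$. Since $\mu(X_1\oplus X_2)$ is the non-increasing rearrangement of the combined list $\{\mu(j,X_1)\}_{j\ge0}\cup\{\mu(j,X_2)\}_{j\ge0}$, the quantity $\prod_{k=0}^n\mu(k,X_1\oplus X_2)$ is the largest product obtainable by choosing $n+1$ entries from that combined list, and --- since each sequence $\mu(X_i)$ is non-increasing --- an optimal choice takes an initial segment from each sequence. Hence, for all $X_1,X_2$ and all $n\ge0$,
$$\prod_{k=0}^n\mu(k,X_1\oplus X_2)=\max_{p+q=n+1}\ \prod_{j=0}^{p-1}\mu(j,X_1)\cdot\prod_{j=0}^{q-1}\mu(j,X_2).$$
I would then apply this with $X_i=B_i$, fix a maximizing pair $(p,q)$, bound each factor using $B_i\prec\prec_{\log}A_i$, and observe that $\prod_{j=0}^{p-1}\mu(j,A_1)\cdot\prod_{j=0}^{q-1}\mu(j,A_2)$ is one of the admissible products for $A_1\oplus A_2$. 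This gives $\prod_{k=0}^n\mu(k,B_1\oplus B_2)\le\prod_{k=0}^n\mu(k,A_1\oplus A_2)$ for every $n$, which is (a).

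For (b) I would first record two elementary facts. (i) $\mu(n,B_i)\le\mu(n,B_1\oplus B_2)$ for $i=1,2$ and every $n\ge0$, because the combined list contains at least $n+1$ entries not smaller than $\mu(n,B_i)$; consequently $\mu(B_1)+\mu(B_2)\le 2\mu(B_1\oplus B_2)$ pointwise. (ii) The implication $X\prec\prec_{\log}Y\Rightarrow X^{\oplus 2}\prec\prec_{\log}Y^{\oplus 2}$: since $\mu(X^{\oplus 2})=\sigma_2\mu(X)$, one has $\prod_{k=0}^{2m+1}\mu(k,X^{\oplus 2})=\big(\prod_{k=0}^{m}\mu(k,X)\big)^2$ and $\prod_{k=0}^{2m}\mu(k,X^{\oplus 2})=\big(\prod_{k=0}^{m-1}\mu(k,X)\big)\big(\prod_{k=0}^{m}\mu(k,X)\big)$, and in each case the hypothesis $X\prec\prec_{\log}Y$ bounds every factor. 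Then I would combine these: by \eqref{mu sum}, monotonicity and homogeneity of $\sigma_2$, and (i),
$$\mu(B_1+B_2)\le\sigma_2\big(\mu(B_1)+\mu(B_2)\big)\le 2\sigma_2\mu(B_1\oplus B_2)=\mu\big(2(B_1\oplus B_2)^{\oplus 2}\big),$$
so $B_1+B_2\prec\prec_{\log}2(B_1\oplus B_2)^{\oplus 2}$. On the other hand (a) gives $B_1\oplus B_2\prec\prec_{\log}A_1\oplus A_2$, and (ii) together with the trivial scaling $Z\prec\prec_{\log}W\Rightarrow 2Z\prec\prec_{\log}2W$ upgrades this to $2(B_1\oplus B_2)^{\oplus 2}\prec\prec_{\log}2(A_1\oplus A_2)^{\oplus 2}$. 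Since $\prec\prec_{\log}$ is transitive, (b) follows.

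Most of the argument is bookkeeping with non-increasing sequences; I expect the two slightly delicate points to be the justification of the combinatorial identity in (a) in the presence of repeated singular values --- here one checks that replacing any selected set of $p$ indices by $\{0,\dots,p-1\}$ cannot decrease the corresponding product, so the maximum is unaffected --- and the even-index case of fact (ii), where the bound must be regrouped as the product of the $m$-th and $(m+1)$-th partial products of $\mu(X)$, rather than read off from a single application of $\prec\prec_{\log}$.
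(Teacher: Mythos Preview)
Your proof is correct and follows essentially the same route as the paper: part (a) via the combinatorial identity $\prod_{k=0}^n\mu(k,X_1\oplus X_2)=\max_{p+q=n+1}\prod_{j<p}\mu(j,X_1)\prod_{j<q}\mu(j,X_2)$, and part (b) via $\mu(B_i)\le\mu(B_1\oplus B_2)$ together with \eqref{mu sum} to reach $\mu(B_1+B_2)\le 2\sigma_2\mu(B_1\oplus B_2)$, then passing from $B_1\oplus B_2$ to $A_1\oplus A_2$. The only cosmetic difference is that you verify the doubling implication $X\prec\prec_{\log}Y\Rightarrow X^{\oplus 2}\prec\prec_{\log}Y^{\oplus 2}$ by hand, whereas the paper simply cites part (a) (take $B_1=B_2=X$, $A_1=A_2=Y$); either way the argument is the same.
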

\begin{proof} In fact, second assertion follows easily from the first one.
\begin{enumerate}[(a)]
\item We have
$$\prod_{k=0}^n\mu(k,B_1\oplus B_2)=\sup_{n_1+n_2=n+1}\prod_{k=0}^{n_1-1}\mu(k,B_1)\prod_{k=0}^{n_2-1}\mu(k,B_2)\leq$$
$$\leq\sup_{n_1+n_2=n+1}\prod_{k=0}^{n_1-1}\mu(k,A_1)\prod_{k=0}^{n_2-1}\mu(k,A_2)=\prod_{k=0}^n\mu(k,A_1\oplus A_2).$$
\item It follows from obvious inequalities
$$\mu(B_1)\leq\mu(B_1\oplus B_2),\quad \mu(B_2)\leq\mu(B_1\oplus B_2)$$
combined with \eqref{mu sum} and part \eqref{suma} that
$$\mu(B_1+B_2)\leq 2\sigma_2\mu(B_1\oplus B_2)\prec\prec_{\log} 2\sigma_2\mu(A_1\oplus A_2)=2\mu((A_1\oplus A_2)^{\oplus 2}).$$
\end{enumerate}
\end{proof}

\begin{prop}\label{geom envelope lemma} For every ideal $\mathcal{I},$ there exists the least ideal $LE(\mathcal{I})$ containing $\mathcal{I}$ and closed with respect to the logarithmic submajorization. This ideal can be defined as follows
\begin{equation}\label{le def}
LE(\mathcal{I})=\{B\in\mathcal{L}(H):\ B\prec\prec_{\log} A\mbox{ for some }A\in\mathcal{I}\}.
\end{equation}
\end{prop}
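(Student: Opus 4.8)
The plan is to verify directly that the set defined in \eqref{le def}, call it $\mathcal{J}$, is an ideal, that it contains $\mathcal{I}$, that it is closed with respect to the logarithmic submajorization, and that it is contained in any ideal with these two properties. The containment $\mathcal{I}\subset\mathcal{J}$ is immediate since $A\prec\prec_{\log}A$ for every $A\in\mathcal{I}$. Minimality is also immediate: if $\mathcal{K}$ is any ideal with $\mathcal{I}\subset\mathcal{K}$ and $\mathcal{K}$ is closed with respect to the logarithmic submajorization, then any $B\in\mathcal{J}$ satisfies $B\prec\prec_{\log}A$ for some $A\in\mathcal{I}\subset\mathcal{K}$, whence $B\in\mathcal{K}$; thus $\mathcal{J}\subset\mathcal{K}$. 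Closure of $\mathcal{J}$ under logarithmic submajorization follows from transitivity of $\prec\prec_{\log}$: if $B\prec\prec_{\log}B'\in\mathcal{J}$, then $B'\prec\prec_{\log}A$ for some $A\in\mathcal{I}$, and multiplying the defining inequalities gives $B\prec\prec_{\log}A$, so $B\in\mathcal{J}$.

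The substantive point is that $\mathcal{J}$ is an ideal, i.e.\ that it is a linear subspace of $\mathcal{L}(H)$ closed under left and right multiplication by $\mathcal{L}(H)$. First I would observe that every element of $\mathcal{J}$ is compact: if $B\prec\prec_{\log}A$ with $A$ compact, then $\mu(k,B)\le(\prod_{m=0}^k\mu(m,A))^{1/(k+1)}\to 0$ since the geometric means of a null sequence tend to $0$. The two-sided multiplication property is easy: for $B\in\mathcal{J}$ with $B\prec\prec_{\log}A\in\mathcal{I}$ and $X,Y\in\mathcal{L}(H)$, one has $\mu(k,XBY)\le \|X\|\|Y\|\mu(k,B)$, hence $\prod_{k=0}^n\mu(k,XBY)\le(\|X\|\|Y\|)^{n+1}\prod_{k=0}^n\mu(k,A)=\prod_{k=0}^n\mu(k,\|X\|\|Y\|A)$, and since $\|X\|\|Y\|A\in\mathcal{I}$ we conclude $XBY\in\mathcal{J}$. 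In particular $\mathcal{J}$ is closed under scalar multiplication. The remaining and only genuinely nontrivial step is closure under addition, and this is exactly what Lemma \ref{sum lessdot}\eqref{sumb} is designed for: given $B_1,B_2\in\mathcal{J}$ with $B_k\prec\prec_{\log}A_k\in\mathcal{I}$, that lemma yields $B_1+B_2\prec\prec_{\log}2(A_1\oplus A_2)^{\oplus 2}$; since $\mathcal{I}$ is an ideal it contains $A_1\oplus A_2$ (as an operator on $H^{\oplus 2}\cong H$) and hence $2(A_1\oplus A_2)^{\oplus 2}$, so $B_1+B_2\in\mathcal{J}$.

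The main obstacle is the additivity of $\mathcal{J}$, since logarithmic submajorization, unlike Hardy--Littlewood submajorization, is not compatible with sums in a naive way — one cannot simply add the defining product inequalities. The role of Lemma \ref{sum lessdot}\eqref{sumb} is precisely to supply the correct substitute: it converts a sum of two logarithmically submajorized operators into a single logarithmically submajorized operator at the cost of a dilation and a constant, both of which are harmless inside an ideal. Once this is in hand, everything else is bookkeeping, and I would present the proof in the order: compactness of elements of $\mathcal{J}$; two-sided absorption (which also gives scalar multiples); additivity via Lemma \ref{sum lessdot}; the inclusion $\mathcal{I}\subset\mathcal{J}$; closure of $\mathcal{J}$ under $\prec\prec_{\log}$; and finally minimality.
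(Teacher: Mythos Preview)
Your proposal is correct and follows essentially the same approach as the paper: the paper also uses Lemma \ref{sum lessdot}\eqref{sumb} for additivity and the inequality $\mu(BC)\le\|C\|_\infty\mu(B)$ for the two-sided absorption, then declares closure under $\prec\prec_{\log}$ and minimality to be clear. Your version is slightly more detailed (you spell out transitivity, minimality, and the compactness of elements of $\mathcal{J}$), but the structure and the key lemma are identical.
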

\begin{proof} If $B_1,B_2\in LE(\mathcal{I}),$ then there exist $A_1,A_2\in\mathcal{I}$ such that $B_1\prec\prec_{\log} A_1$ and $B_2\prec\prec_{\log} A_2.$ We have $A_1\oplus A_2\in\mathcal{I}$ and, therefore, $(A_1\oplus A_2)^{\oplus 2}\in\mathcal{I}.$ It follows from Lemma \ref{sum lessdot} that $B_1+B_2\prec\prec_{\log} 2(A_1\oplus A_2)^{\oplus 2}\in\mathcal{I}.$ In particular, $B_1+B_2\in LE(\mathcal{I}).$ Thus, $LE(\mathcal{I})$ is a linear space.

We now show that $LE(\mathcal{I})$ is an ideal in $\mathcal{L}(H).$ Indeed, for every $B\in LE(\mathcal{I}),$ there exists an operator $A\in\mathcal{I}$ such that $B\prec\prec_{\log}A.$ If now $C\in\mathcal{L}(H),$ then
$$\mu(BC)\leq\|C\|_{\infty}\mu(B)\prec\prec_{\log}\|C\|_{\infty}A\in\mathcal{I}.$$
Hence, $BC\in LE(\mathcal{I}).$ Similarly, $CB\in LE(\mathcal{I})$ and, therefore, $LE(\mathcal{I})$ is an ideal in $\mathcal{L}(H).$

It is clear that $LE(\mathcal{I})$ is closed with respect to the logarithmic submajorization and that every ideal closed with respect to the logarithmic submajorization and containing $\mathcal{I}$ must also contain $LE(\mathcal{I}).$
\end{proof}

\begin{lem}\label{abs lemma} Let $\varphi$ be a positive trace on the ideal $\mathcal{I}.$ For every $T\in\mathcal{I},$ we have $|\varphi(\Re T)|\leq\varphi(|T|).$
\end{lem}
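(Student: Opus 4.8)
The plan is to split $\Re T$ into positive and negative parts and to bound the trace of each part by $\varphi(|T|)$ separately. Write $\Re T=(\Re T)_+-(\Re T)_-$; since $(\Re T)_\pm\leq|\Re T|\in\mathcal{I}$, both parts lie in $\mathcal{I}$ and are positive, and $\varphi(\Re T)=\varphi((\Re T)_+)-\varphi((\Re T)_-)$ is a difference of two nonnegative reals. Using the elementary inequality $|a-b|\leq\max\{a,b\}$ for $a,b\geq0$, it suffices to prove $\varphi((\Re T)_+)\leq\varphi(|T|)$ and $\varphi((\Re T)_-)\leq\varphi(|T|)$.

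The crux is a sharp singular value comparison: if $S\in\mathcal{L}(H)$ is compact with $\Re S\geq0$, then $\mu(n,\Re S)\leq\mu(n,S)$ for every $n\geq0$. I would derive this from the Courant--Fischer min-max formulae, namely $\mu(n,\Re S)=\min_{\dim M\leq n}\max\{\langle(\Re S)\xi,\xi\rangle:\ \xi\perp M,\ \|\xi\|=1\}$ (valid since $\Re S\geq0$) and $\mu(n,S)=\min_{\dim M\leq n}\max\{\|S\xi\|:\ \xi\perp M,\ \|\xi\|=1\}$, combined with the pointwise estimate $\langle(\Re S)\xi,\xi\rangle=\Re\langle S\xi,\xi\rangle\leq|\langle S\xi,\xi\rangle|\leq\|S\xi\|$ for every unit vector $\xi$. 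Comparing the two min-max expressions subspace by subspace yields $\mu(n,\Re S)\leq\mu(n,S)$.

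To conclude, let $p$ be the spectral projection of the self-adjoint operator $\Re T$ corresponding to $[0,\infty)$ and set $q=1-p$. Since $p$ commutes with $\Re T$, we have $p(\Re T)p=(\Re T)_+$ and $p(\Re T)p=\Re(pTp)$, so the operator $S_1=pTp$ satisfies $\Re S_1=(\Re T)_+\geq0$; moreover $\mu(pTp)\leq\mu(T)$ because $\|p\|\leq1$. The comparison above then gives $\mu((\Re T)_+)=\mu(\Re S_1)\leq\mu(S_1)\leq\mu(T)=\mu(|T|)$, so Lemma \ref{trace monotone} yields $\varphi((\Re T)_+)\leq\varphi(|T|)$. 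Running the same argument with $S_2=-qTq$, which satisfies $\Re S_2=-q(\Re T)q=(\Re T)_-\geq0$ and $\mu(S_2)=\mu(qTq)\leq\mu(T)$, gives $\varphi((\Re T)_-)\leq\varphi(|T|)$. Combining with the reduction of the first paragraph completes the proof.

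I expect the min-max comparison to be the only genuine obstacle. The tempting shortcuts fail to give the sharp constant: the inequality $\|\Re X\|\leq\|X\|$ for unitarily invariant norms only yields the Hardy--Littlewood submajorization $\Re T\prec\prec|T|$, with respect to which a positive trace need not be monotone; and the dilation bound $\mu(2n,\Re T)\leq\mu(n,T)$ coming from \eqref{mu sum} would cost a factor of $2$. It is precisely the hypothesis $\Re S\geq0$ that upgrades the estimate to the pointwise inequality $\mu(\Re S)\leq\mu(S)$ with constant $1$, and this is what makes the decomposition through $pTp$ and $-qTq$ work.
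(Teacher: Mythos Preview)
Your proof is correct, and the key singular-value estimate $\mu(n,\Re S)\leq\mu(n,S)$ under the hypothesis $\Re S\geq0$ is established cleanly by the Courant--Fischer argument you give.

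The paper's route is different. It invokes a decomposition due to Fack and Kosaki: for any $T$ there exist partial isometries $U,V$ with $|\Re T|\leq\frac12(U|T|U^*+V|T^*|V^*)$. Since $\mu(U|T|U^*)\leq\mu(|T|)$ and $\mu(V|T^*|V^*)\leq\mu(|T^*|)=\mu(|T|)$, Lemma~\ref{trace monotone} gives $\varphi(|\Re T|)\leq\varphi(|T|)$ directly, without splitting into positive and negative parts. Your argument, by contrast, is entirely self-contained: no external lemma is cited, and the sharp constant comes from the min--max inequality $\langle(\Re S)\xi,\xi\rangle\leq\|S\xi\|$ applied on the ranges of the spectral projections $p$ and $q$. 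The trade-off is that the paper's proof is shorter and yields the slightly stronger bound $\varphi(|\Re T|)\leq\varphi(|T|)$ in one stroke, whereas your approach bounds $\varphi((\Re T)_\pm)$ separately and then uses $|a-b|\leq\max\{a,b\}$; but for the statement as written both are equally sharp, and your argument has the merit of exposing exactly where positivity of $\Re(pTp)$ is used.
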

\begin{proof} By Lemma 4.3 of \cite{FackKosaki}, there exist partial isometries $U,V\in\mathcal{L}(H)$ such that
$$|\Re T|\leq \frac12(U|T|U^*+V|T^*|V^*).$$
Note that
$$\mu(U|T|U^*)\leq\mu(|T|),\quad \mu(V|T^*|V^*)\leq\mu(|T^*|)=\mu(|T|).$$
Since $\varphi$ is a positive trace, it follows from Lemma \ref{trace monotone} that
$$|\varphi(\Re T)|\leq\varphi(|\Re T|)\leq\frac12(\varphi(U|T|U^*)+\varphi(V|T^*|V^*))\leq\varphi(|T|).$$
\end{proof}

We are now ready to prove the second main result of the paper.

\begin{proof}[of Theorem \ref{main theorem}] First, we prove \eqref{main first}. Let $\varphi$ be a positive spectral trace on the ideal $\mathcal{I}$ and let positive operators $A,B\in\mathcal{I}$ be such that $B\prec\prec_{\log}A.$ By Lemma \ref{dk prop11}, there exists $T\in\mathcal{L}(H)$ such that $\lambda(T)=\mu(B)$ and $\mu(T)\leq\mu(A).$ By the assumption, the trace $\varphi$ is spectral and, therefore, we have $\varphi(B)=\varphi(\lambda(T))=\varphi(T).$ In particular, $\varphi(T)\in\mathbb{R}$ and, therefore, $\varphi(T)=\varphi(\Re T).$ By Lemma \ref{abs lemma}, we have $\varphi(B)=\varphi(\Re T)\leq\varphi(|T|).$ Since $\mu(T)\leq\mu(A),$ it follows that $\varphi(B)\leq\varphi(|T|)\leq\varphi(A).$ This proves \eqref{main first}.

Now, we prove \eqref{main second}. Let $\varphi$ be a positive trace on $\mathcal{I}$ which is monotone with respect to the logarithmic submajorization. If $0\leq C\in LE(\mathcal{I}),$ then there exists $A\in\mathcal{I}$ such that $C\prec\prec_{\log} A.$ If now $0\leq B\in\mathcal{I}$ is such that $B\leq C,$ then, obviously, $B\prec\prec_{\log} A.$ Hence, by the assumption, $\varphi(B)\leq\varphi(A).$ If we define $\varphi$ on $LE(\mathcal{I})$ by formula \eqref{extension eq}, then $\varphi(C)\leq\varphi(A)<\infty.$ By Proposition \ref{extension lemma}, $\varphi$ is a positive trace on $LE(\mathcal{I}).$ By Theorem \ref{extended kalton} \eqref{kalton first}, $\varphi(T)=\varphi(\lambda(T))$ for every $T\in\mathcal{I}.$ This proves \eqref{main second}.
\end{proof}

\section{Examples}\label{Examples}

The following lemma shows that every geometrically stable ideal is closed with respect to the logarithmic submajorization. Theorem \ref{dk example} \eqref{dkc} shows that the converse is false.

\begin{lem}\label{gs implies cl} Every geometrically stable ideal is closed with respect to the logarithmic submajorization.
\end{lem}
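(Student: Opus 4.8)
The plan is to take an arbitrary geometrically stable ideal $\mathcal{I}$ and show directly that if $B\prec\prec_{\log}A$ with $A\in\mathcal{I}$, then $B\in\mathcal{I}$. Without loss of generality we may replace $A$ by $\mu(A)$ and $B$ by $\mu(B)$, since an ideal contains an operator iff it contains its singular value sequence (using an isometry as in the proof of Lemma \ref{trace monotone}). So I want to show: if $x=\mu(x)\in\mathcal{I}$ and $y=\mu(y)\in l_\infty$ satisfy $\prod_{k=0}^n y(k)\le\prod_{k=0}^n x(k)$ for all $n$, then $y\in\mathcal{I}$.

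The key observation is that the geometric-mean sequence $g(x)=\{(\prod_{m=0}^n x(m))^{1/(n+1)}\}_{n\ge0}$ is itself decreasing (an easy computation, or the same monotonicity argument used in Lemma \ref{s dec}), and it dominates $y$ pointwise: indeed $y(n)\le(\prod_{k=0}^n y(k))^{1/(n+1)}\le(\prod_{k=0}^n x(k))^{1/(n+1)}=g(x)(n)$, where the first inequality uses that $y=\mu(y)$ is decreasing. Since $\mathcal{I}$ is geometrically stable, $g(x)\in\mathcal{I}$ by definition. Hence $0\le y\le g(x)\in\mathcal{I}$, and since $\mathcal{I}$ is an ideal (hence solid under pointwise domination by positive decreasing sequences — again via the fact that $y\le g(x)$ implies $y=C g(x)$ for a bounded diagonal multiplier $C$, or simply $\mu(y)\le\mu(g(x))$), we conclude $y\in\mathcal{I}$. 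Therefore $B\in\mathcal{I}$.

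The only mildly delicate point is the passage between operators and their singular-value sequences and the solidity of $\mathcal{I}$: for positive sequences, $0\le y\le z$ with $z\in\mathcal{I}$ gives $y\in\mathcal{I}$ because $y=M z$ where $M$ is the diagonal operator with entries $y(k)/z(k)\in[0,1]$ (setting the ratio to $0$ when $z(k)=0$, noting $y(k)=0$ there too), and $\mathcal{I}$ is an ideal. For general $B\prec\prec_{\log}A\in\mathcal{I}$, one first notes $\mu(A)\in\mathcal{I}$ and $\mu(B)\in\mathcal{I}$ by the sequence version, and then $B\in\mathcal{I}$ since $\mu(B)\in\mathcal{I}$ and $\mathcal{I}$ is an ideal closed under the map $B\mapsto\mu(B)$ in the sense that $B$ and $\mu(B)$ generate the same ideal. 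I expect the main (very minor) obstacle to be stating these solidity facts cleanly rather than any genuine difficulty; the mathematical content is entirely the pointwise domination $y\le g(x)$ together with geometric stability.
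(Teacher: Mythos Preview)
Your proof is correct and follows essentially the same line as the paper's: the key step is the pointwise inequality $\mu(n,B)\le\bigl(\prod_{k=0}^n\mu(k,B)\bigr)^{1/(n+1)}\le\bigl(\prod_{k=0}^n\mu(k,A)\bigr)^{1/(n+1)}$, after which geometric stability of $\mathcal{I}$ gives $B\in\mathcal{I}$. The paper's proof simply writes this chain of inequalities directly for $\mu(n,B)$ without the explicit reduction to sequences or the discussion of solidity, but the argument is the same.
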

\begin{proof} Let $A\in\mathcal{I}$ and let $B\in\mathcal{L}(H)$ be such that $B\prec\prec_{\log}A.$ We have
$$\mu(n,B)\leq(\prod_{k=0}^n\mu(k,B))^{1/(n+1)}\leq(\prod_{k=0}^n\mu(k,A))^{1/(n+1)},\quad n\geq0.$$
The assertion follows now from the definition of a geometrically stable ideal.
\end{proof}

\begin{thm}\label{dk example} Define an operator $A\in\mathcal{L}(H)$ by setting\footnote{That is, $\mu(k,A)=2^{-2^{3n}}$ for all $k\in[2^{2^{3(n-1)}},2^{2^{3n}}).$}
$$\mu(A)=\sup_{n\geq0}2^{-2^{3n}}\chi_{[0,2^{2^{3n}})}.$$
For the principal ideal $\mathcal{I}_A$ generated by $A,$ we have
\begin{enumerate}[(a)]
\item\label{dka} Every positive trace $\varphi$ on $\mathcal{I}_A$ extends to a positive trace on $LE(\mathcal{I}_A).$ In particular, $\varphi$ is spectral and monotone with respect to logarithmic submajorization.
\item\label{dkb} Ideal $\mathcal{I}_A$ is not closed with respect to the logarithmic submajorization. Moreover, there exists a trace (non-positive) on $\mathcal{I}$ which is not spectral.
\item\label{dkc} The ideal $LE(\mathcal{I}_A)$ fails to be geometrically stable.
\end{enumerate}
\end{thm}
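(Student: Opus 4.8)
The plan is to analyze the principal ideal $\mathcal{I}_A$ by exploiting the extreme sparsity of the "blocks" in $\mu(A)$: the singular values are constant equal to $2^{-2^{3n}}$ on the huge dyadic interval $[2^{2^{3(n-1)}},2^{2^{3n}})$, so $\mu(A)$ drops very slowly on a logarithmic scale and the ratio between consecutive block-lengths is astronomically large. First I would record the basic description $\mathcal{I}_A=\{B:\mu(B)=O(\sigma_m\mu(A))\text{ for some }m\}$, which for this particular $A$ simplifies considerably because $\sigma_2\mu(A)\asymp\mu(A)$ up to a shift of one block; hence $B\in\mathcal{I}_A$ iff $\mu(B)\le c\,\mu(A)$ eventually, after throwing away finitely many coordinates. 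For part \eqref{dkc} I would exhibit a concrete $B\in LE(\mathcal{I}_A)$ whose geometric means $(\prod_{k=0}^{n}\mu(k,B))^{1/(n+1)}$ are \emph{not} dominated by any dilation of $\mu(A)$: take $B$ with $\mu(n,B)=(\prod_{k=0}^n\mu(k,A))^{1/(n+1)}$, so that $B\prec\prec_{\log}A$ trivially and thus $B\in LE(\mathcal{I}_A)$, then compute the geometric mean of $\mu(B)$ at the right endpoint $n=2^{2^{3m}}-1$ of a block and show it beats $\mu(n,A)$ by an unbounded factor, using that the geometric mean of $\mu(A)$ itself over a block is heavily weighted toward the previous (larger) block because the previous block has length roughly $\log_2(\text{current length})$ — no wait, the \emph{current} block dominates in length, so the geometric mean over $[0,2^{2^{3m}})$ is close to $2^{-2^{3m}}$; the point is that iterating the geometric-mean operation and comparing across the block structure yields the required blow-up. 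This is the computational heart of \eqref{dkc} and I expect it to be the main obstacle: one must choose the test operator and the comparison points so that the telescoping products are genuinely tractable.

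For part \eqref{dka} the strategy is to show that $LE(\mathcal{I}_A)$ is generated (as an ideal closed under $\prec\prec_{\log}$) in a controlled way, and in fact that \emph{every} positive trace on $\mathcal{I}_A$ is automatically monotone with respect to $\prec\prec_{\log}$, whence Theorem \ref{main theorem}\eqref{main second} applies. Since $\mathcal{I}_A\not\subset\mathcal{L}_1$ (the series $\sum_k\mu(k,A)$ diverges because each block contributes length times value $=2^{2^{3n}-2^{3n}}$... more carefully, block $n$ contributes about $2^{2^{3n}}\cdot2^{-2^{3n}}=1$, so the sum diverges like the number of blocks), Lemma \ref{singular}\eqref{singb} is available: if $\mu(k,B)=o(\mu(k,A))$ then $\varphi(B)=0$. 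The plan is to prove that whenever $0\le B\prec\prec_{\log}A$ with $B\in\mathcal{I}_A$, one has $\mu(k,B)=o(\mu(k,A))$ — or more precisely a two-term estimate $\mu(B)\le c\,\mu(A)\chi_{E}+(\text{an }o(\mu(A))\text{ tail})$ for a set $E$ of density zero in the appropriate block-counting sense — using that $\prec\prec_{\log}A$ forces the large values of $B$ to be confined to short initial segments while $\mu(A)$ is essentially constant on long blocks. Combined with Lemma \ref{trace monotone} and Lemma \ref{singular}, this gives $\varphi(B)\le\varphi(A)$, i.e. monotonicity with respect to $\prec\prec_{\log}$; then Theorem \ref{main theorem}\eqref{main second} yields the extension to $LE(\mathcal{I}_A)$ and spectrality.

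For part \eqref{dkb}, the first assertion — $\mathcal{I}_A$ is not closed under $\prec\prec_{\log}$ — follows from \eqref{dkc}, since $LE(\mathcal{I}_A)$ would equal $\mathcal{I}_A$ otherwise and then $\mathcal{I}_A$ would be geometrically stable if closed... actually more directly: the test operator $B$ from \eqref{dkc} satisfies $B\prec\prec_{\log}A$ but $B\notin\mathcal{I}_A$ (its singular values are not $O(\sigma_m\mu(A))$, which is exactly what the geometric-mean blow-up shows), so $\mathcal{I}_A\subsetneq LE(\mathcal{I}_A)$. For the existence of a non-spectral trace on $\mathcal{I}_A$: by Theorem \ref{main commutator theorem}, $\mathrm{Com}(LE(\mathcal{I}_A))$ is described via $C\lambda(\cdot)$; I would produce a normal (in fact diagonal) operator $N\in LE(\mathcal{I}_A)\setminus\mathcal{I}_A$ and a quasi-nilpotent $Q\in\mathcal{I}_A$ with $\lambda(Q+\text{diag})$ arranged so that $T:=Q$ has $\lambda(T)\notin\mathcal{I}_A$ while $T\in\mathcal{I}_A$ — this is precisely the mechanism of Theorem \ref{extended kalton}\eqref{kalton second}, which applies since $\mathcal{I}_A$ is not closed under $\prec\prec_{\log}$. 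Then any Hamel-basis extension of a functional that separates $T$ from $\mathrm{Com}(\mathcal{I}_A)$ but does not respect $\lambda$ gives a (necessarily non-positive, by \eqref{dka}) trace that is not spectral. The routine verifications are the ideal-membership computations; the genuinely delicate point remains the quantitative block estimate in \eqref{dkc}, which drives all three parts.
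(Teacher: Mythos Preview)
The core of your plan for \eqref{dkc} rests on a false claim. You set $\mu(n,B)=(\prod_{k=0}^n\mu(k,A))^{1/(n+1)}$ and assert $B\prec\prec_{\log}A$ ``trivially''; but since $\mu(A)$ is non-increasing, the geometric mean satisfies $(\prod_{k=0}^n\mu(k,A))^{1/(n+1)}\ge\mu(n,A)$ for every $n$, whence $\prod_{k=0}^n\mu(k,B)\ge\prod_{k=0}^n\mu(k,A)$ and the logarithmic submajorization goes the \emph{wrong way}. In fact the paper proves precisely that your $B=\mathbf{T}\mu(A)$ lies \emph{outside} $LE(\mathcal{I}_A)$. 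The correct witness for geometric instability is $A$ itself: one shows $\mathbf{T}\mu(A)\notin LE(\mathcal{I}_A)$, which by Lemma~\ref{t properties} reduces to the failure of $\mathbf{T}^2\mu(A)\le 2^l\sigma_{2^l}\mathbf{T}\mu(A)$ for every $l$ (Lemma~\ref{t main lemma}). Your error here also undermines your route to \eqref{dkb}, since you derive ``$\mathcal{I}_A$ not closed'' from the same $B$. The paper instead invokes the Dykema--Kalton construction \cite{DykemaKalton1998} of a quasi-nilpotent $Q\in\mathcal{I}_A\setminus\mathrm{Com}(\mathcal{I}_A)$; any linear functional vanishing on $\mathrm{Com}(\mathcal{I}_A)$ with $\varphi(Q)=1$ is then a non-spectral trace, since $\varphi(\lambda(Q))=\varphi(0)=0$.

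Two further points. Your preliminary simplification $\sigma_2\mu(A)\asymp\mu(A)$ is also false: at $k=2^{2^{3n}}$ the ratio is $2^{7\cdot 2^{3n}}$, so the description of $\mathcal{I}_A$ genuinely requires dilations. For \eqref{dka} your overall strategy (decompose $\mu(B)$ into a piece comparable to $\mu(A)$ plus pieces annihilated by $\varphi$) is the right one, but the form you propose, $c\,\mu(A)\chi_E+o(\mu(A))$ with $E$ of density zero, does not work: density of $E$ alone says nothing about $\varphi(\mu(A)\chi_E)$. The paper's actual estimate (Proposition~\ref{horrible technical estimate}) is the three-term bound $\mu(B)\le 2^l\mu(A_0)+256\sigma_2\mu(A)+\sigma_{2^l}\mu^4(A)$, where the auxiliary $A_0$ is \emph{not} $o(\mu(A))$ pointwise but nonetheless satisfies $\varphi(A_0)=0$ by a separate dilation argument (Lemma~\ref{a0 vanish}). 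This only yields $\varphi(B)\le 512\varphi(A)$, which suffices for the extension via Proposition~\ref{extension lemma}; the monotonicity you aim for is then recovered \emph{a posteriori} from Theorem~\ref{main theorem}\eqref{main first} applied on $LE(\mathcal{I}_A)$.
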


Lemma \ref{a0 vanish} and Proposition \ref{horrible technical estimate} below are needed for the proof of Theorem \ref{dk example} \eqref{dka}.

\begin{lem}\label{a0 vanish} Let $A$ be as in Theorem \ref{dk example} and let a compact operator $A_0\in\mathcal{L}(H)$ be defined by setting
$$\mu(A_0)=\sup_{n\geq0}2^{-2^{3(n+1)}}\chi_{[0,2^{3n+2^{3n}})}.$$
We have $A_0\in\mathcal{I}_A$ and $\varphi(A_0)=0$ for every positive trace on $\mathcal{I}_A.$
\end{lem}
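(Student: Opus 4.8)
The plan is to establish the two claims of Lemma~\ref{a0 vanish} separately, starting with the membership $A_0\in\mathcal{I}_A$ and then deducing $\varphi(A_0)=0$ from Lemma~\ref{singular}\eqref{singb}. For the membership, recall that $\mathcal{I}_A$ is the principal ideal generated by $A$, so $B\in\mathcal{I}_A$ if and only if $\mu(B)\leq c\,\sigma_m\mu(A)$ for some $c>0$ and $m\in\mathbb{N}$; in fact it suffices to show $\mu(A_0)\leq\sigma_m\mu(A)$ for a suitable fixed dilation $m$ (or even $\mu(A_0)\leq\mu(A)$ directly, if the block lengths cooperate). I would compare the two step functions block by block: $\mu(A_0)$ takes the value $2^{-2^{3(n+1)}}$ on $[0,2^{3n+2^{3n}})$, while $\mu(A)$ takes the value $2^{-2^{3n}}$ on $[0,2^{2^{3n}})$. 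The key numerical observations are that the value $2^{-2^{3(n+1)}}=2^{-2^{3n}\cdot 8}$ of $\mu(A_0)$ at an index $k$ is far smaller than the corresponding value of $\mu(A)$, and that the block length $2^{3n+2^{3n}}$ of $\mu(A_0)$ only modestly exceeds $2^{2^{3n}}$ — by a factor $2^{3n}$, which is absorbed into a dilation $\sigma_m$ once one checks $3n\le m\cdot(\text{something})$; more carefully, one shows that the index $k=2^{3n+2^{3n}}$ where $\mu(A_0)$ drops to $2^{-2^{3(n+1)}}$ still lies in a region where $\mu(A)\geq 2^{-2^{3(n+1)}}$, i.e. $2^{3n+2^{3n}}<2^{2^{3(n+1)}}=2^{8\cdot 2^{3n}}$, which holds for all $n\ge 0$. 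This comparison gives $\mu(A_0)\le\mu(A)$ pointwise, hence $A_0\in\mathcal{I}_A$.

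For the second claim, the natural route is to invoke Lemma~\ref{singular}\eqref{singb}: since $A$ is not trace class (its singular values satisfy $\sum_k\mu(k,A)=\sum_n 2^{-2^{3n}}\cdot(2^{2^{3n}}-2^{2^{3(n-1)}})\sim\sum_n 1=\infty$, so $\mathcal{I}_A\not\subset\mathcal{L}_1$), every positive trace on $\mathcal{I}_A$ is singular, and moreover vanishes on any $B$ with $\mu(k,B)=o(\mu(k,A))$. So it suffices to verify $\mu(k,A_0)=o(\mu(k,A))$ as $k\to\infty$. On a block of indices where $\mu(k,A)=2^{-2^{3n}}$, the value $\mu(k,A_0)$ is at most $2^{-2^{3n}\cdot 8}$ (its value at the left endpoint of that block's overlap region), so the ratio $\mu(k,A_0)/\mu(k,A)\le 2^{-7\cdot 2^{3n}}\to 0$. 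One must be slightly careful at the block boundaries of $A_0$ versus those of $A$, but since $\mu(A_0)$ is itself a decreasing step function, on the index interval where $\mu(A)$ equals $2^{-2^{3n}}$ the largest value $\mu(A_0)$ can attain is bounded by its value at the left end of that interval, which is $\le 2^{-2^{3(n+1)}}$ by the block structure; this yields the desired $o(\cdot)$ estimate uniformly.

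I expect the main obstacle to be the bookkeeping in the block-by-block comparison: one has to track exactly which index intervals of $\mu(A_0)$ sit inside which index intervals of $\mu(A)$, and confirm that the (doubly-exponential) decay of the values always dominates the (merely exponential, in $n$) mismatch in block lengths — in particular that $2^{3n+2^{3n}}<2^{8\cdot 2^{3n}}$ so that no dilation is even needed for the membership, and that on each $\mu(A)$-plateau the $o(\mu(k,A))$ bound holds uniformly in $k$. The inequalities themselves are elementary once the right comparison is set up, so beyond this indexing care the proof should be short. An alternative to using Lemma~\ref{singular}\eqref{singb} would be to argue directly: given $\varepsilon>0$ pick $N$ with $2^{-7\cdot 2^{3N}}<\varepsilon$, write $\mu(A_0)\le\varepsilon\mu(A)+\mu(A_0)\chi_{[0,K_N)}$ for a suitable finite $K_N$, and use Lemma~\ref{trace monotone} together with part~\eqref{singa} of Lemma~\ref{singular} to get $\varphi(A_0)\le\varepsilon\varphi(A)$; letting $\varepsilon\to 0$ finishes it. Either way the analytic content is the same.
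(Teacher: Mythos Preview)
Your membership argument is fine: indeed $\mu(A_0)\leq\mu(A)$ pointwise, since $3n+2^{3n}<2^{3(n+1)}$ for every $n\ge 0$.

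The second claim, however, rests on the assertion that $\mu(k,A_0)=o(\mu(k,A))$, and this is \emph{false}. On the $\mu(A)$-plateau $[2^{2^{3(n-1)}},2^{2^{3n}})$, where $\mu(k,A)=2^{-2^{3n}}$, the sequence $\mu(A_0)$ does \emph{not} stay below $2^{-2^{3(n+1)}}$: for $k$ in the sub-interval $[2^{2^{3(n-1)}},2^{3(n-1)+2^{3(n-1)}})$ (non-empty for $n\ge 2$) the smallest $m$ with $k<2^{3m+2^{3m}}$ is $m=n-1$, so $\mu(k,A_0)=2^{-2^{3n}}=\mu(k,A)$. Thus the ratio $\mu(k,A_0)/\mu(k,A)$ equals $1$ on arbitrarily large indices, and Lemma~\ref{singular}\eqref{singb} is inapplicable. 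Your ``alternative'' at the end is just the proof of Lemma~\ref{singular}\eqref{singb} unwound, so it fails for the same reason: no inequality of the form $\mu(A_0)\le\varepsilon\mu(A)+(\text{finite rank})$ holds for $\varepsilon<1$.

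The paper's proof exploits a different, more robust, feature of $A_0$: although $\mu(A_0)$ matches $\mu(A)$ on the intervals $[2^{2^{3n}},2^{3n+2^{3n}})$, those intervals are so short relative to the next $\mu(A)$-plateau that any fixed dilation $\sigma_{2^l}\mu(A_0)$ still fits under $\mu(A)$ eventually. Concretely, $3n+l+2^{3n}\le 2^{3(n+1)}$ for all $n\ge l$, whence $\sigma_{2^l}\mu(A_0)\le(\text{finite rank})+\mu(A)$; applying $\varphi$, using Lemma~\ref{trace monotone} and Lemma~\ref{singular}\eqref{singa}, gives $2^l\varphi(A_0)\le\varphi(A)$ for every $l$, so $\varphi(A_0)=0$. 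The point is that one needs the stronger property ``$\sigma_m\mu(A_0)\le\mu(A)+(\text{finite rank})$ for all $m$'' rather than the (false) ``$\mu(A_0)=o(\mu(A))$''.
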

\begin{proof} Clearly, $\mu(A_0)\leq\mu(A)\in\mathcal{I}_A.$ For every $l\geq0,$ we have
$$\sigma_{2^l}\mu(A_0)=\sup_{n\geq0}2^{-2^{3(n+1)}}\chi_{[0,2^{3n+l+2^{3n}})}.$$
Note that $3n+l+2^{3n}\leq 2^{3(n+1)}$ for $n\geq l.$ It follows that
$$\sigma_{2^l}\mu(A_0)\leq\sup_{0\leq n<l}2^{-2^{3(n+1)}}\chi_{[0,2^{3n+l+2^{3n}})}+\sup_{n\geq 0}2^{-2^{3(n+1)}}\chi_{[0,2^{2^{3(n+1)}})}.$$
Hence,
$$\sigma_{2^l}\mu(A_0)\leq\sigma_{2^l}\sup_{0\leq n<l}2^{-2^{3(n+1)}}\chi_{[0,2^{3n+2^{3n}})}+\mu(A).$$
Since $\varphi$ is a positive trace, it follows from Lemma \ref{trace monotone} that
$$2^l\varphi(A_0)\leq 2^l\varphi(\sup_{0\leq n<l}2^{-2^{3(n+1)}}\chi_{[0,2^{3n+2^{3n}})})+\varphi(A).$$
By Lemma \ref{singular} \eqref{singa}, the first term at the right hand side is $0.$ Therefore, $\varphi(A_0)\leq 2^{-l}\varphi(A).$ Since $l$ is arbitrarily large, the assertion follows.
\end{proof}

\begin{prop}\label{horrible technical estimate} Let $A$ be as in Theorem \ref{dk example} and let $A_0$ be as in Lemma \ref{a0 vanish}. If $B\in\mathcal{I}_A$ is such that $B\prec\prec_{\log} A,$ then there exists $l\geq0$ such that
\begin{equation}\label{horror}
\mu(B)\leq 2^l\mu(A_0)+256\sigma_2\mu(A)+\sigma_{2^l}(\mu^4(A)).
\end{equation}
\end{prop}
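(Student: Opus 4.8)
The plan is to exploit the very explicit, super-exponentially lacunary structure of $\mu(A)$ and $\mu(A_0)$ together with the constraint $B\prec\prec_{\log} A$. Write $a_n=2^{-2^{3n}}$ and $N_n=2^{2^{3n}}$, so that $\mu(A)=\sup_n a_n\chi_{[0,N_n)}$, and note the key numerical facts: $a_n=N_n^{-1}$, $a_{n+1}=a_n^{8}$, and $N_{n+1}=N_n^{8}$. The idea is to estimate $\mu(k,B)$ by $(\prod_{m=0}^k\mu(m,B))^{1/(k+1)}\le(\prod_{m=0}^k\mu(m,A))^{1/(k+1)}$ and then compute the geometric mean $(\prod_{m=0}^k\mu(m,A))^{1/(k+1)}$ explicitly on each lacunary block $k\in[N_{n-1},N_n)$. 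On such a block the product $\prod_{m=0}^k\mu(m,A)$ splits into the contributions of the finitely many earlier plateaus $a_0,\dots,a_{n-1}$ (each of length $N_j-N_{j-1}$) and the current plateau $a_n$ (of length $k+1-N_{n-1}$). Because $N_{n-1}\ll N_n$ and the $a_j$ are super-exponentially small, the earlier plateaus contribute only a bounded power of $a_{n-1}$ to the geometric mean, while the bulk of the block contributes roughly $a_n^{(k+1-N_{n-1})/(k+1)}$. One then checks that, up to an absolute multiplicative constant and the harmless $\sigma_2$-dilation coming from inequalities of the form \eqref{mu sum}, the resulting bound for $\mu(k,B)$ is dominated by $2^l\mu(k,A_0)$ in the \emph{early} part of each block (where $k$ is comparable to $N_{n-1}$, so $\mu(k,A_0)$ is still of size $a_n$ or larger and the geometric-mean bound is close to $a_{n-1}$), and by $\sigma_{2^l}(\mu^4(A))(k)$ in the \emph{late} part (where $k+1-N_{n-1}$ is already a definite fraction of $k+1$, so the geometric mean is close to $a_n=\mu(k,A)$, and raising to a suitable small power absorbed into $\mu^4(A)$ after the dilation $\sigma_{2^l}$ works).

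Concretely, the steps I would carry out are: (1) reduce to $B=\mu(B)$ and record $\mu(k,B)\le(\prod_{m=0}^k\mu(m,A))^{1/(k+1)}$ via Weyl's inequality applied to $B\prec\prec_{\log}A$; (2) for fixed $n$ and $k\in[N_{n-1},N_n)$, write $\log_2\prod_{m=0}^k\mu(m,A)=-\sum_{j=0}^{n-1}2^{3j}(N_j-N_{j-1})-2^{3n}(k+1-N_{n-1})$ (with $N_{-1}:=0$) and divide by $k+1$; (3) split the range of $k$ at, say, $k=2N_{n-1}$ (or at $k$ such that $k+1-N_{n-1}=\tfrac12(k+1)$): on $[N_{n-1},2N_{n-1})$ bound the geometric mean from above by a fixed power of $a_{n-1}$, hence by a fixed dilate $2^l a_n'$ of the value of $\mu(A_0)$ at that index — here one needs the precise length $2^{3n+2^{3n}}=N_n\cdot 2^{3n}$ of the $a_{n+1}$-plateau of $A_0$ to see that $\mu(A_0)$ is still at level $\approx a_{n+1}$ well past $N_{n-1}$, which is why the factor $2^l$ and the constant $256$ (bounding the crude $2^3$-type losses over the first $l$ blocks) appear; (4) on $[2N_{n-1},N_n)$ bound the geometric mean by $a_n^{1/2}\cdot(\text{earlier factor})\le C\,\sigma_{2^l}(\mu^4(A))(k)$, using that $a_n^{1/2}=\mu^4(k',A)$ for an index $k'\approx N_n^{1/2}=N_{n-1}^4$ lying within $\sigma_{2^l}$-reach of $k$ once $l$ is chosen large enough (depending on $A$ only); (5) assemble the three bounds and absorb absolute constants into the stated $256\sigma_2\mu(A)$ term, which also covers the finitely many initial indices and any $\sigma_2$ losses.

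The main obstacle I expect is bookkeeping the index ranges in step (3)–(4): matching the geometric-mean bound on each lacunary block to the right one of the three terms on the right-hand side of \eqref{horror}, and pinning down a single $l$ (depending only on $A$) that simultaneously handles the "early block" comparison with $2^l\mu(A_0)$ and the "late block" comparison with $\sigma_{2^l}(\mu^4(A))$. The super-exponential growth $N_{n+1}=N_n^8$ is generous enough that crude estimates suffice everywhere, but one must be careful that the \emph{first} few blocks $n\le l$ — where $N_{n-1}$ is not yet negligible compared to the cumulative earlier length — are swallowed by the $256\sigma_2\mu(A)$ term, which is the role of the absolute constant $256$. The non-linear behaviour of $\mathbf{S}$ and logarithmic submajorization plays no role here; it is a pure, if fiddly, estimate on geometric means of a lacunary sequence.
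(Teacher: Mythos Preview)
Your plan has a genuine gap: you never use the hypothesis $B\in\mathcal{I}_A$. Everything in steps (1)--(5) runs off the single bound $\mu(k,B)\le(\mathbf{T}\mu(A))(k):=(\prod_{m=0}^k\mu(m,A))^{1/(k+1)}$, which comes from $B\prec\prec_{\log}A$ alone. But that bound is too weak to yield \eqref{horror}. Indeed, by Lemma \ref{t aux lemma} (and, conceptually, by Theorem \ref{dk example}\,\eqref{dkc}, which says $\mathbf{T}\mu(A)\notin LE(\mathcal{I}_A)\supset\mathcal{I}_A$), the sequence $\mathbf{T}\mu(A)$ does not lie in $\mathcal{I}_A$, so it cannot be dominated by the right-hand side of \eqref{horror}, which does lie in $\mathcal{I}_A$. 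Concretely: with $N_n=2^{2^{3n}}$, $\gamma_n=2^{3n}N_n$, and any fixed $l$, take $k=2^lN_n$. Lemma \ref{t aux lemma} gives $(\mathbf{T}\mu(A))(k)\ge a_{n+1}\cdot 2^{7\gamma_n/(k+1)}\approx a_{n+1}\cdot 2^{7\cdot 2^{3n-l}}$, while at this index $2^l\mu(k,A_0)=2^la_{n+1}$, $256\sigma_2\mu(A)(k)=256a_{n+1}$, and $\sigma_{2^l}\mu^4(A)(k)=a_{n+1}^4$. None of these controls $2^{7\cdot 2^{3n-l}}a_{n+1}$ as $n\to\infty$. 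So the geometric-mean bound alone fails on the whole subinterval $[2^lN_n,\gamma_n)$.

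The paper's proof closes exactly this hole by invoking $B\in\mathcal{I}_A$: from the definition of a principal ideal one gets an integer $l=l(B)$ with $\mu(B)\le 2^l\sigma_{2^l}\mu(A)$, and on $[2^lN_n,\gamma_n)$ this gives $\mu(k,B)\le 2^l\mu(k,A)=2^l\mu(k,A_0)$ directly. The geometric-mean bound is then used, but on the \emph{other} three pieces of $[N_n,N_{n+1})$, and with a different matching than yours: $[N_n,2N_n)$ is controlled by $256\sigma_2\mu(A)$ (not by $2^l\mu(A_0)$, since here the geometric mean is of order $a_n$, not $a_{n+1}$); $[2N_n,2^lN_n)$ is controlled by $\sigma_{2^l}\mu^4(A)$, using the geometric mean up to index $2N_n-1$, which equals $a_{n+1}^{1/2}=a_n^4$; and $[\gamma_n,N_{n+1})$ is controlled by $256\mu(A)$. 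In particular, $l$ depends on $B$ (through $B\in\mathcal{I}_A$), not only on $A$ as you assert; that is the reason the $2^l$ and $\sigma_{2^l}$ appear in \eqref{horror} with an existentially quantified $l$.
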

\begin{proof} Since $\mathcal{I}_A$ is a principal ideal and since $B\in\mathcal{I}_A,$ it follows from \eqref{mu sum} that there exists $l\geq1$ such that $\mu(B)\leq 2^l\sigma_{2^l}\mu(A).$ We verify the inequality \eqref{horror} on the following intervals.
$$[2^{l+2^{3n}},2^{3n+2^{3n}}),\quad [2^{1+2^{3n}},2^{l+2^{3n}}),\quad [2^{3n+2^{3n}},2^{2^{3(n+1)}}),\quad [2^{2^{3n}},2^{1+2^{3n}}).$$
Here, it is presumed that, when $3n\leq l,$ we do not consider first interval at all.

\noindent For every $k\in[2^{l+2^{3n}},2^{3n+2^{3n}}),$ it follows from the definition of $\sigma_{2^l}$ that
$$\mu(k,B)\leq (2^l\sigma_{2^l}\mu(A))(k)=2^l\mu(k,A)=2^l\mu(k,A_0).$$
For every $k\in [2^{1+2^{3n}},2^{l+2^{3n}}),$ it follows from the assumption $B\prec\prec_{\log} A$ that
$$\mu(k,B)\leq(\prod_{m=0}^{2^{1+2^{3n}}-1}\mu(m,B))^{2^{-1-2^{3n}}}\leq(\prod_{m=0}^{2^{1+2^{3n}}-1}\mu(m,A))^{2^{-1-2^{3n}}}\leq$$
$$\leq(\prod_{m=2^{2^{3n}}}^{2^{1+2^{3n}}-1}\mu(m,A))^{2^{-1-2^{3n}}}=2^{-4\cdot 2^{3n}}=(\sigma_{2^l}\mu^4(A))(k).$$
For every $k\in[2^{3n+2^{3n}},2^{2^{3(n+1)}}),$ it follows from the assumption $B\prec\prec_{\log} A$ that
$$\mu(k,B)\leq(\prod_{m=0}^k\mu(m,B))^{1/(k+1)}\leq(\prod_{m=0}^k\mu(m,A))^{1/(k+1)}\leq$$
$$\leq(\prod_{m=2^{2^{3n}}}^k\mu(m,A))^{1/(k+1)}=(2^{-2^{3(n+1)}})^{(k+1-2^{2^{3n}})/(k+1)}\leq2^{8-2^{3(n+1)}}=256\mu(k,A).$$
Arguing similarly, we infer that, for every $k\in[2^{2^{3n}},2^{1+2^{3n}}),$ we have
$$\mu(k,B)\leq(\prod_{m=0}^{2^{2^{3n}}-1}\mu(m,B))^{2^{-2^{3n}}}\leq(\prod_{m=0}^{2^{2^{3n}}-1}\mu(m,A))^{2^{-2^{3n}}}\leq$$
$$\leq(\prod_{m=2^{2^{3(n-1)}}}^{2^{2^{3n}}-1}\mu(m,A))^{2^{-2^{3n}}}=2^{-2^{3n}(2^{2^{3n}}-2^{2^{3(n-1)}})2^{-2^{3n}}}\leq 2^{8-2^{3n}}=(256\sigma_2\mu(A))(k).$$
A combination of all $4$ preceding estimates yields the assertion.
\end{proof}

Define the nonlinear homogeneous mapping $\mathbf{T}:l_{\infty}\to l_{\infty}$ by setting
\begin{equation}\label{t def}
(\mathbf{T}x)(k)=(\prod_{m=0}^k\mu(m,x))^{1/(k+1)},\quad k\geq0.
\end{equation}

Lemma \ref{t properties}, Lemma \ref{t aux lemma} and Lemma \ref{t main lemma} below are used in the proof of Theorem \ref{dk example} \eqref{dkc}.

\begin{lem}\label{t properties} Operator $\mathbf{T}$ has the following properties.
\begin{enumerate}[(a)]
\item\label{tpa} for every $x\in l_{\infty},$ $\mathbf{T}x=\mu(\mathbf{T}x).$
\item\label{tpb} for every $x,y\in l_{\infty},$ we have $y\prec\prec_{\log}x$ if and only if $\mathbf{T}y\leq\mathbf{T}x.$
\item\label{tpc} for every $x\in l_{\infty}$ we have $\sigma_N\mathbf{T}x\leq\mathbf{T}(\sigma_Nx)\leq\sigma_{2N}\mathbf{T}x.$
\item\label{tpd} for every $x\in l_{\infty},$ $\mathbf{T}x\prec\prec_{\log}N\sigma_Nx$ implies that $\mathbf{T}^2x\leq N\sigma_{2N}\mathbf{T}x.$
\item\label{tpe} for every $x\in l_{\infty},$ $\mathbf{T}^2x\leq N\sigma_N\mathbf{T}x$ implies that $\mathbf{T}x\prec\prec_{\log}N\sigma_Nx.$
\end{enumerate}
\end{lem}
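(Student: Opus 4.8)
The plan is to treat the five assertions essentially in the order stated, since each later item leans on the ones before it, and to reduce everything to the elementary fact that $\mathbf{T}x$ records the running geometric means of the decreasing rearrangement $\mu(x)$. For \eqref{tpa} I would write $g(k)=(\mathbf{T}x)(k)$ and check directly that $g$ is non-increasing: the ratio $g(k+1)/g(k)$ can be rewritten, after taking logarithms, as $\tfrac1{k+2}\sum_{m=0}^{k+1}\log\mu(m,x)-\tfrac1{k+1}\sum_{m=0}^{k}\log\mu(m,x)$, which is $\le 0$ precisely because $\log\mu(k+1,x)$ is no larger than the average of $\log\mu(0,x),\dots,\log\mu(k,x)$ — a one-line consequence of $\mu(\cdot,x)$ being decreasing. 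Since $\mathbf{T}x\ge 0$ by construction, $\mathbf{T}x=\mu(\mathbf{T}x)$. For \eqref{tpb}, note that $\mathbf{T}y\le\mathbf{T}x$ pointwise says exactly $\big(\prod_{m=0}^k\mu(m,y)\big)^{1/(k+1)}\le\big(\prod_{m=0}^k\mu(m,x)\big)^{1/(k+1)}$ for all $k$, and raising both sides to the power $k+1$ gives the definition of $y\prec\prec_{\log}x$; the converse reverses this step. (Here $\mathbf{T}$ depends only on $\mu(x)$, so I may freely assume $x=\mu(x)$, $y=\mu(y)$ throughout.)

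For \eqref{tpc} the point is purely combinatorial. Writing out $\sigma_N x$, its first $N(k+1)$ entries are $\mu(0,x)$ repeated $N$ times, …, $\mu(k,x)$ repeated $N$ times, so $(\mathbf{T}(\sigma_N x))(N(k+1)-1)=\big(\prod_{m=0}^k\mu(m,x)^N\big)^{1/(N(k+1))}=(\mathbf{T}x)(k)$; since $\mathbf{T}(\sigma_N x)$ is decreasing by \eqref{tpa}, on the block of indices $j\in[Nk,N(k+1))$ it lies between $(\mathbf{T}x)(k)$ and $(\mathbf{T}x)(k-1)$, and comparing these blocks with the blocks defining $\sigma_N\mathbf{T}x$ and $\sigma_{2N}\mathbf{T}x$ gives both inequalities (the factor $2$ absorbing the one-step shift). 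I would carry out this bookkeeping carefully on the block $[Nk,N(k+1))$ since the indexing is where small errors creep in; this is the step I expect to be the most fiddly, though it is not deep.

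Items \eqref{tpd} and \eqref{tpe} then follow by assembling the pieces. For \eqref{tpd}: from the hypothesis $\mathbf{T}x\prec\prec_{\log}N\sigma_N x$ apply $\mathbf{T}$ to both sides and use \eqref{tpb} to get $\mathbf{T}^2 x=\mathbf{T}(\mathbf{T}x)\le\mathbf{T}(N\sigma_N x)=N\,\mathbf{T}(\sigma_N x)\le N\sigma_{2N}\mathbf{T}x$, the last step being the right-hand inequality of \eqref{tpc} and homogeneity of $\mathbf{T}$. For \eqref{tpe}: from $\mathbf{T}^2 x\le N\sigma_N\mathbf{T}x$ and the left-hand inequality $\sigma_N\mathbf{T}x\le\mathbf{T}(\sigma_N x)$ of \eqref{tpc} we get $\mathbf{T}(\mathbf{T}x)=\mathbf{T}^2x\le N\mathbf{T}(\sigma_N x)=\mathbf{T}(N\sigma_N x)$, and then \eqref{tpb} converts this back to $\mathbf{T}x\prec\prec_{\log}N\sigma_N x$. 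Thus no genuinely new idea is needed beyond \eqref{tpa}–\eqref{tpc}; the only care required is keeping the dilation indices straight and remembering that $\mathbf{T}$, being defined through $\mu(\cdot)$, is insensitive to rearrangement so that $\mathbf{T}(N\sigma_N x)=N\mathbf{T}(\sigma_N x)$ and $\mathbf{T}(\mathbf{T}x)$ make sense as written.
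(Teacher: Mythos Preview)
Your proposal is correct and follows essentially the same approach as the paper: the paper declares \eqref{tpa} and \eqref{tpb} obvious, proves \eqref{tpc} by the same block-by-block comparison using the identity $(\mathbf{T}(\sigma_N x))(N(m+1)-1)=(\mathbf{T}x)(m)$ together with monotonicity from \eqref{tpa}, and then derives \eqref{tpd} and \eqref{tpe} exactly as you do, by chaining \eqref{tpb} and \eqref{tpc}. Your write-up is somewhat more explicit (particularly for \eqref{tpa}), but there is no substantive difference in method.
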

\begin{proof} The claims \eqref{tpa} and \eqref{tpb} are obvious.

In order to prove the claim \eqref{tpc}, let $m\geq0$ and let $0\leq r\leq N-1.$ It follows from the first assertion that
$$(\prod_{k=0}^m\mu(k,x))^{\frac1{m+1}}=(\prod_{k=0}^{(m+1)N-1}\mu(k,\sigma_Nx))^{\frac1{(m+1)N}}\leq(\prod_{k=0}^{mN+r}\mu(k,\sigma_Nx))^{\frac1{mN+r+1}}.$$
Equivalently, (setting $n=mN+r$), we have
$$(\prod_{k=0}^{[\frac{n}{N}]}\mu(k,x))^{1/([\frac{n}{N}]+1)}\leq(\prod_{k=0}^n\mu(k,\sigma_Nx))^{1/(n+1)}.$$
Since $(\sigma_Nz)(n)=z([\frac{n}{N}])$ for all $z\in l_{\infty},$ we rewrite the preceding inequality as $\sigma_N\mathbf{T}x\leq\mathbf{T}(\sigma_Nx).$ The proof of the inequality $\mathbf{T}(\sigma_Nx)\leq\sigma_{2N}\mathbf{T}x$ is similar.

The claim \eqref{tpd} follows from consecutive application of \eqref{tpb} and \eqref{tpc}. The claim \eqref{tpe} assertions follows from consecutive application of \eqref{tpc} and \eqref{tpb}.
\end{proof}

\begin{lem}\label{t aux lemma} For every $2^{2^{3n}}\leq k<2^{2^{3(n+1)}},$ we have
$$2^{\frac{7\gamma_n}{k+1}}\leq2^{2^{3(n+1)}}(\mathbf{T}\mu(A))(k)\leq 2^{1+\frac{7\gamma_n}{k+1}}.$$
Here, $\gamma_n=2^{3n+2^{3n}}.$
\end{lem}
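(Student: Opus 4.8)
The plan is to estimate $\prod_{m=0}^k \mu(m,A)$ for $2^{2^{3n}}\le k < 2^{2^{3(n+1)}}$ by splitting the product over the dyadic-type blocks on which $\mu(A)$ is constant. Recall $\mu(m,A) = 2^{-2^{3j}}$ for $m\in[2^{2^{3(j-1)}}, 2^{2^{3j}})$, so the block indexed by $j$ has length $2^{2^{3j}} - 2^{2^{3(j-1)}}$ and contributes a factor $2^{-2^{3j}(2^{2^{3j}}-2^{2^{3(j-1)}})}$ to the product. For the current range of $k$, the full blocks $j=0,1,\dots,n$ are included, and then there is a partial block $j=n+1$ running from $2^{2^{3n}}$ up to $k$, contributing $2^{-2^{3(n+1)}(k+1-2^{2^{3n}})}$. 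So
$$\prod_{m=0}^k\mu(m,A) = \Big(\prod_{j=0}^n 2^{-2^{3j}(2^{2^{3j}}-2^{2^{3(j-1)}})}\Big)\cdot 2^{-2^{3(n+1)}(k+1-2^{2^{3n}})},$$
with the convention $2^{2^{3(-1)}}=2^{1/8}$ or simply $=1$ depending on how the first block is set up (this boundary convention will need to be pinned down, but it only affects an $O(1)$ factor).

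First I would take $\log_2$ of $2^{2^{3(n+1)}}(\mathbf{T}\mu(A))(k)$, which is
$$2^{3(n+1)} + \frac{1}{k+1}\log_2\prod_{m=0}^k\mu(m,A) = 2^{3(n+1)} - \frac{1}{k+1}\Big(\sum_{j=0}^n 2^{3j}(2^{2^{3j}}-2^{2^{3(j-1)}}) + 2^{3(n+1)}(k+1-2^{2^{3n}})\Big).$$
The term $2^{3(n+1)}(k+1-2^{2^{3n}})/(k+1) = 2^{3(n+1)} - 2^{3(n+1)}2^{2^{3n}}/(k+1)$ cancels the leading $2^{3(n+1)}$, leaving
$$\log_2\big(2^{2^{3(n+1)}}(\mathbf{T}\mu(A))(k)\big) = \frac{1}{k+1}\Big(2^{3(n+1)}2^{2^{3n}} - \sum_{j=0}^n 2^{3j}(2^{2^{3j}}-2^{2^{3(j-1)}})\Big).$$
So the whole lemma reduces to showing that the quantity in the big parentheses, call it $\Gamma_n$, satisfies $7\gamma_n \le \Gamma_n \le 7\gamma_n + (k+1)$, i.e. $7\cdot 2^{3n+2^{3n}} \le \Gamma_n$ and $\Gamma_n - 7\cdot 2^{3n+2^{3n}} \le k+1$. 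Since $k+1 \ge 2^{2^{3n}} \ge$ anything of the form (small)$\cdot 2^{2^{3n}}$, the upper bound will be easy once $\Gamma_n$ is pinned to leading order; and $\Gamma_n$ does not depend on $k$ at all, so the lemma is really a clean arithmetic identity-plus-estimate about $\Gamma_n$.

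The main step, then, is to estimate $\Gamma_n = 2^{3(n+1)}2^{2^{3n}} - \sum_{j=0}^n 2^{3j}(2^{2^{3j}}-2^{2^{3(j-1)}})$. I would handle the sum by Abel summation / telescoping: write $\sum_{j=0}^n 2^{3j}2^{2^{3j}} - \sum_{j=0}^n 2^{3j}2^{2^{3(j-1)}}$ and reindex the second sum to $\sum_{j=-1}^{n-1} 2^{3(j+1)}2^{2^{3j}} = 8\sum_{j=-1}^{n-1} 2^{3j}2^{2^{3j}}$, so that the combined sum becomes a telescoping-with-coefficients expression whose dominant contribution is the top term $j=n$. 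The key numeric feature is that $2^{2^{3j}}$ grows doubly exponentially, so $2^{2^{3n}}$ utterly dominates $2^{2^{3(n-1)}}$ and all lower terms — the tail $\sum_{j<n}$ is negligible compared to $2^{2^{3n}}$. Concretely one expects $\Gamma_n = 2^{2^{3n}}\big(2^{3(n+1)} - 2^{3n} + (\text{exponentially small correction})\big) = 2^{2^{3n}}\cdot 2^{3n}(8-1) + (\text{small}) = 7\cdot 2^{3n}2^{2^{3n}} + (\text{small}) = 7\gamma_n + (\text{small})$. The "small" correction is bounded by a constant times $2^{3n}2^{2^{3(n-1)}}$, which is $\le 2^{2^{3n}} \le k+1$ for $n$ large, giving both inequalities; the finitely many small $n$ are checked by hand.

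The main obstacle I anticipate is bookkeeping the boundary term at $j=0$ (the "$2^{2^{3(-1)}}$" in the first block) and making sure the crude bound on the lower-order tail is genuinely below $k+1$ rather than merely $O(2^{2^{3n}})$ with a bad constant — since the claimed window between the lower and upper bounds is exactly width $1$ in the exponent (a factor of $2$), there is no slack, so the tail estimate must be done carefully, not just up to constants. I would organize it as: (i) exact formula for $\log_2(2^{2^{3(n+1)}}\mathbf{T}\mu(A)(k))$ as $\Gamma_n/(k+1)$; (ii) lower bound $\Gamma_n \ge 7\gamma_n$ by dropping the (negative) lower-order telescoping remainder carefully and checking its sign; (iii) upper bound $\Gamma_n \le 7\gamma_n + 2^{2^{3n}} \le 7\gamma_n + (k+1)$ by a doubly-exponential geometric-series tail bound; (iv) divide by $k+1$ and exponentiate.
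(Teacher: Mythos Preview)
Your approach is correct and essentially the same as the paper's: decompose $\prod_{m=0}^k\mu(m,A)$ over the constant blocks of $\mu(A)$, cancel the leading $2^{3(n+1)}$, and reduce both inequalities to an arithmetic estimate on the $k$-independent quantity you call $\Gamma_n$. The paper's execution is marginally cleaner --- rewriting everything in terms of $\gamma_s=2^{3s+2^{3s}}$, the telescoping produces the \emph{exact} identity $\Gamma_n = 14 + 7\sum_{s=1}^n\gamma_s$ (the constant $14$ absorbs your boundary worry at $j=0$), from which both the lower bound $\Gamma_n\ge 7\gamma_n$ and the upper bound $\Gamma_n - 7\gamma_n = 14+7\sum_{s=1}^{n-1}\gamma_s\le 14 + 7(n-1)\gamma_{n-1}\le k+1$ drop out for all $n\ge 0$ with no hand-checking of small $n$ and no concern about the factor-of-$2$ window being tight.
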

\begin{proof} Fix $k$ and $n$ satisfying the assumption. By the definition of $A,$ we have
$$\prod_{m=0}^k\mu(m,A)=\frac14\prod_{s=1}^n\prod_{m=2^{2^{3(s-1)}}}^{2^{2^{3s}}-1}\mu(m,A)\cdot\prod_{m=2^{2^{3n}}}^k\mu(m,A)=$$
$$=\frac14\cdot\prod_{s=1}^n2^{-2^{3s}(2^{2^{3s}}-2^{2^{3(s-1)}})}\cdot 2^{-2^{3(n+1)}(k+1-2^{2^{3n}})}.$$
Hence, multiplying both sides by $2^{(k+1)2^{3(n+1)}}$ and using the abbreviation $\gamma_n=2^{3n+2^{3n}},$ we obtain
$$2^{(k+1)2^{3(n+1)}}\prod_{m=0}^k\mu(m,A)=\frac14\prod_{s=1}^n2^{8\gamma_{s-1}-\gamma_s}\cdot 2^{8\gamma_n}=2^{14}\cdot\prod_{s=1}^n2^{7\gamma_s}.$$
The assertion follows now from the inequality
$$2^{7\gamma_n}\leq\prod_{s=1}^n2^{7\gamma_s}\leq 2^{7\gamma_n+7(n-1)\gamma_{n-1}}\leq 2^{7\gamma_n+k-13},$$
where we used the estimate $7(n-1)\gamma_{n-1}\leq k-13,$ which holds for every $n$ and for every $k\in[2^{2^{3n}},2^{2^{3(n+1)}}).$
\end{proof}

\begin{lem}\label{t main lemma} For every $l\geq 1,$ the inequality $\mathbf{T}^2\mu(A)\leq 2^l\sigma_{2^l}\mathbf{T}\mu(A)$ fails.
\end{lem}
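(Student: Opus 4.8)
The plan is to show that $\mathbf{T}^2\mu(A) \le 2^l\sigma_{2^l}\mathbf{T}\mu(A)$ cannot hold for any fixed $l$, by producing, for each $l$, a coordinate $k$ at which the inequality is violated. The natural strategy is to get a sharp enough lower bound for $(\mathbf{T}^2\mu(A))(k)$ and a matching upper bound for $(2^l\sigma_{2^l}\mathbf{T}\mu(A))(k)$ using Lemma \ref{t aux lemma}, and then compare the two on the "plateau" intervals $[2^{2^{3n}}, 2^{2^{3(n+1)}})$ for $n$ large relative to $l$. The key observation from Lemma \ref{t aux lemma} is that, up to a bounded factor, $(\mathbf{T}\mu(A))(k) \approx 2^{-2^{3(n+1)}} \cdot 2^{7\gamma_n/(k+1)}$ on this interval, where $\gamma_n = 2^{3n+2^{3n}}$; so $\mathbf{T}\mu(A)$ itself, on each such block, ranges over a multiplicative window of size roughly $2^{7\gamma_n/2^{2^{3n}}} = 2^{7\cdot 2^{3n}}$, i.e. it is far from constant on the block.

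First I would iterate the estimate: applying $\mathbf{T}$ again to the sequence $\mathbf{T}\mu(A)$, I would compute $(\mathbf{T}^2\mu(A))(k) = \big(\prod_{m=0}^k (\mathbf{T}\mu(A))(m)\big)^{1/(k+1)}$ and use the two-sided bound of Lemma \ref{t aux lemma} on each block up to index $k$; since $\gamma_s$ grows so fast (doubly exponentially) the dominant contribution to $\prod_{m=0}^k (\mathbf{T}\mu(A))(m)$ comes from the last block, and I expect $(\mathbf{T}^2\mu(A))(k)$ to be comparable (within a bounded factor) to $(\mathbf{T}\mu(A))(k)$ itself when $k$ is near the \emph{left} end of a block, but strictly larger near the \emph{right} end of a block, because averaging the logarithm over a window that includes the large left-end values of $\mathbf{T}\mu(A)$ pulls the value up. Meanwhile $\sigma_{2^l}\mathbf{T}\mu(A)$ at index $k$ equals $(\mathbf{T}\mu(A))(\lfloor k/2^l\rfloor)$, which for $k$ in the right portion of a long block is the value of $\mathbf{T}\mu(A)$ at a point still inside the same block but shifted left by a factor $2^l$ — and since the block has multiplicative width $\gg 2^l$ (for $n$ large), this shift changes the value only by a factor at most $2^{7\gamma_n/(k/2^l) - 7\gamma_n/k} $, which I would bound and show is dominated by the factor $2^l$ only on part of the block.

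The heart of the argument is therefore a careful comparison on a well-chosen index $k = k(l,n)$, with $n \to \infty$: I would pick $k$ roughly of order $2^{2^{3n}} \cdot c$ for a suitable constant $c > 1$ (or near the right end $2^{2^{3(n+1)}}$), plug the Lemma \ref{t aux lemma} bounds into both sides, and verify that the exponent on the left-hand side (from $\mathbf{T}^2$) exceeds the exponent on the right-hand side (from $2^l\sigma_{2^l}\mathbf{T}$) by an amount that grows with $n$, hence certainly exceeds $\log_2(2^l) = l$ for $n$ large. Concretely, the $2^l$ prefactor contributes only an additive $l$ to the base-2 logarithm, whereas the gap between $\mathbf{T}^2\mu(A)$ and $\sigma_{2^l}\mathbf{T}\mu(A)$ coming from the $2^{7\gamma_n/(k+1)}$-type terms is of order $\gamma_n/2^{2^{3n}} = 2^{3n} \to \infty$; so for every fixed $l$ there is an $n$ (and a corresponding $k$) at which the claimed inequality fails.

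The main obstacle I anticipate is purely bookkeeping: obtaining a clean \emph{lower} bound for $(\mathbf{T}^2\mu(A))(k)$ requires multiplying the per-block lower bounds of Lemma \ref{t aux lemma} across all blocks $s \le n$ and across the partial last block, keeping track of the $\frac14$, the $2^{14}$, the $(n-1)\gamma_{n-1} \le k-13$ slack, and the exponent $1/(k+1)$, without letting the accumulated constants swamp the $2^{3n}$-sized gap I am trying to exhibit. Once the two-sided estimates are in hand, the final inequality comparison is elementary. I would organize the proof as: (i) record $(\mathbf{T}\mu(A))(k)$ via Lemma \ref{t aux lemma}; (ii) derive two-sided bounds for $(\mathbf{T}^2\mu(A))(k)$ on the block $[2^{2^{3n}},2^{2^{3(n+1)}})$; (iii) choose $k$ and $n$ explicitly in terms of $l$; (iv) compare exponents and conclude the inequality $\mathbf{T}^2\mu(A)\le 2^l\sigma_{2^l}\mathbf{T}\mu(A)$ is false.
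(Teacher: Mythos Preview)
Your plan is correct and follows essentially the same route as the paper: bound $(\mathbf{T}\mu(A))(k)$ via Lemma~\ref{t aux lemma}, iterate to control $(\mathbf{T}^2\mu(A))(k)$, and compare exponents at a well-chosen index in the $n$-th block for $n$ large relative to $l$. The paper carries this out with the specific choice $k=\gamma_n-1$ (so that $\gamma_n/(k+1)=1$ and the harmonic sum over $[2^{2^{3n}},\gamma_n)$ gives a clean lower bound $\geq n$), obtaining $(\mathbf{T}^2\mu(A))(\gamma_n-1)\geq 2^{7n-2^{3(n+1)}}$ against $(2^l\sigma_{2^l}\mathbf{T}\mu(A))(\gamma_n-1)\leq 2^{l+1+7\cdot 2^l-2^{3(n+1)}}$, so the inequality fails once $n\geq 2^{l+1}$; note in particular that the resulting gap in exponents is linear in $n$ rather than of order $2^{3n}$, and that your suggested $k\approx c\cdot 2^{2^{3n}}$ works only if $c$ is allowed to depend on $l$ (one needs roughly $\ln c>2^l$).
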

\begin{proof} We will demonstrate that the inequality above fails at the point $\gamma_n-1$ for all sufficiently large $n$ (we use the abbreviation $\gamma_n=2^{3n+2^{3n}}$ from Lemma \ref{t aux lemma}). It is obvious that
$$(\mathbf{T}^2\mu(A))(\gamma_n-1)=2^{-2^{3(n+1)}}(\prod_{m=0}^{\gamma_n-1}2^{2^{3(n+1)}}(\mathbf{T}\mu(A))(m))^{1/\gamma_n}.$$
Using the fact (guaranteed by the inequality $\mathbf{T}\mu(A)\geq\mu(A)$ and Lemma \ref{t aux lemma}) that
$$2^{2^{3(n+1)}}(\mathbf{T}\mu(A))(m)\geq\begin{cases}
1,&m<2^{2^{3n}}\\
2^{\frac{7\gamma_n}{m+1}},&2^{2^{3n}}\leq m<2^{2^{3(n+1)}}
\end{cases}
$$
we infer that
$$(\mathbf{T}^2\mu(A))(\gamma_n-1)\geq 2^{-2^{3(n+1)}}
(\prod_{m=2^{2^{3n}}}^{\gamma_n-1}2^{\frac{7\gamma_n}{m+1}})^{1/\gamma_n}=2^{-2^{3(n+1)}}\cdot 2^{\sum_{m=2^{2^{3n}}}^{\gamma_n-1}\frac7{m+1}}.$$
Since
$$\sum_{m=2^{2^{3n}}}^{\gamma_n-1}\frac1{m+1}\geq\log(\frac{\gamma_n}{2^{2^{3n}}})-2^{-2^{3n}}\geq n,$$
it follows that
$$(\mathbf{T}^2\mu(A))(\gamma_n-1)\geq 2^{7n-2^{3(n+1)}}.$$
On the other hand, it follows from Lemma \ref{t aux lemma} (with $k=2^{3n-l+2^{3n}}-1$) that
$$(2^l\sigma_{2^l}\mathbf{T}\mu(A))(\gamma_n-1)=2^l(\mathbf{T}\mu(A))(2^{3n-l+2^{3n}}-1)\leq 2^{l+1}\cdot 2^{7\cdot 2^l-2^{3(n+1)}}.$$
Thus, for $n\geq 2^{l+1},$ we have
$$(\mathbf{T}^2\mu(A))(\gamma_n-1)\geq(2^l\sigma_{2^l}\mathbf{T}\mu(A))(\gamma_n-1).$$
\end{proof}

We are now ready to prove main result of this section.

\begin{proof}[of Theorem \ref{dk example}] First, we prove \eqref{dka}. In what follows, $A_0$ is as in Lemma \ref{a0 vanish}. Let $0\leq B\in\mathcal{I}_A$ is such that $B\prec\prec_{\log} A.$ If $\varphi$ is a positive trace on $\mathcal{I}_A,$ then we infer from Proposition \ref{horrible technical estimate} that
$$\varphi(B)\leq512\varphi(A)+2^l\varphi(\mu^4(A))+2^l\varphi(A_0).$$
By Lemma \ref{a0 vanish}, $\varphi(A_0)=0.$ Since $\mu^4(k,A)=o(\mu(k,A))$ as $k\to\infty,$ it follows from Lemma \ref{singular} that $\varphi(\mu^4(A))=0.$ Hence, $\varphi(B)\leq 512\varphi(A).$ Repeating the argument in the proof of Theorem \ref{main theorem} \eqref{main second}, we obtain that $\varphi$ extends to a positive trace on $LE(\mathcal{I}_A).$ We keep denoting this extension by $\varphi.$ By Theorem \ref{extended kalton} \eqref{kalton first}, the positive trace $\varphi$ on the ideal $LE(\mathcal{I}_A)$ is spectral. Applying Theorem \ref{main theorem} \eqref{main first}, we infer that $\varphi$ is monotone with respect to the logarithmic submajorization. Thus, the original trace $\varphi$ on $\mathcal{I}$ is also monotone with respect to the logarithmic submajorization.

We now turn to \eqref{dkb}. This assertion is, in fact, proved in Example 1.5 of \cite{DykemaKalton1998}. More precisely, it is shown there that there exists a quasi-nilpotent operator $Q\in\mathcal{I}_A$ such that $Q\notin{\rm Com}(\mathcal{I}_A).$ By Zorn lemma, there exists a linear functional $\varphi$ on $\mathcal{I}_A$ such that $\varphi(Q)=1$ and such that $\varphi$ vanishes on ${\rm Com}(\mathcal{I}_A).$ Hence, $\varphi$ is a (non-positive) trace on $\mathcal{I}_A$ such that $\varphi(Q)=1.$ Since $Q$ is quasi-nilpotent, it follows that $\lambda(Q)=0$ and, therefore, $\varphi(\lambda(Q))=0.$ However, $\varphi(Q)=1$ by construction. Hence, $\varphi$ is not spectral. This proves \eqref{dkb}.

Finally, we prove \eqref{dkc}. More precisely, we will prove that
$$\{(\prod_{m=0}^n\mu(k,A))^{1/(n+1))}\}_{n\geq0}\notin LE(\mathcal{I}_A).$$
In terms of the operator $\mathbf{T}$ introduced in \eqref{t def}, the assertion above can be written as $\mathbf{T}\mu(A)\notin LE(\mathcal{I}_A).$ Assume the contrary. It follows from \eqref{le def} that there exists $B\in\mathcal{I}_A$ such that $\mathbf{T}\mu(A)\prec\prec_{\log}B.$ Since $\mathcal{I}_A$ is a principal ideal, it follows that $\mu(B)\leq 2^l\sigma_{2^l}\mu(A)$ for some $l\geq1.$ Hence, $\mathbf{T}\mu(A)\prec\prec 2^l\sigma_{2^l}\mu(A).$ By Lemma \ref{t properties} \eqref{tpd}, we have $\mathbf{T}^2\mu(A)\leq 2^l\sigma_{2^{l+1}}\mathbf{T}\mu(A).$ However, the latter inequality fails by Lemma \ref{t main lemma}. This proves \eqref{dkc}.
\end{proof}


\begin{thebibliography}{10}

\bibitem{andohiai1994}
T.~Ando and F.~Hiai.
\newblock Log majorization and complementary {G}olden-{T}hompson type
  inequalities.
\newblock {\em Linear Algebra Appl.}, 197/198:113--131, 1994.
\newblock Second Conference of the International Linear Algebra Society (ILAS)
  (Lisbon, 1992).

\bibitem{andohiai2011}
T.~Ando and F.~Hiai.
\newblock Operator log-convex functions and operator means.
\newblock {\em Math. Ann.}, 350(3):611--630, 2011.

\bibitem{AS}
N.~Azamov and F.~Sukochev.
\newblock A {L}idskii type formula for {D}ixmier traces.
\newblock {\em C. R. Math. Acad. Sci. Paris}, 340(2):107--112, 2005.

\bibitem{BF}
M.~Benameur and T.~Fack.
\newblock Type {II} non-commutative geometry. {I}. {D}ixmier trace in von
  {N}eumann algebras.
\newblock {\em Adv. Math.}, 199(1):29--87, 2006.

\bibitem{Connes}
A.~Connes.
\newblock {\em Noncommutative geometry}.
\newblock Academic Press Inc., San Diego, CA, 1994.

\bibitem{Dixmier}
J.~Dixmier.
\newblock Existence de traces non normales.
\newblock {\em C. R. Acad. Sci. Paris S\'er. A-B}, 262:A1107--A1108, 1966.

\bibitem{DFWW2}
K.~Dykema, T.~Figiel, G.~Weiss, and M.~Wodzicki.
\newblock Commutator structure of operator ideals.
\newblock {\em IMADA preprint}, 22, 1997.

\bibitem{DFWW}
K.~Dykema, T.~Figiel, G.~Weiss, and M.~Wodzicki.
\newblock Commutator structure of operator ideals.
\newblock {\em Adv. Math.}, 185(1):1--79, 2004.

\bibitem{DykemaKalton1998}
K.~Dykema and N.~Kalton.
\newblock Spectral characterization of sums of commutators. {II}.
\newblock {\em J. Reine Angew. Math.}, 504:127--137, 1998.

\bibitem{DK}
K.~Dykema and N.~Kalton.
\newblock Sums of commutators in ideals and modules of type {II} factors.
\newblock {\em Ann. Inst. Fourier (Grenoble)}, 55(3):931--971, 2005.

\bibitem{Fackcomm2004}
T.~Fack.
\newblock Sums of commutators in non-commutative {B}anach function spaces.
\newblock {\em J. Funct. Anal.}, 207(2):358--398, 2004.

\bibitem{FackKosaki}
T.~Fack and H.~Kosaki.
\newblock Generalized {$s$}-numbers of {$\tau$}-measurable operators.
\newblock {\em Pacific J. Math.}, 123(2):269--300, 1986.

\bibitem{GohKr1}
I.~Gohberg and M.~Kre{\u\i}n.
\newblock {\em Introduction to the theory of linear nonselfadjoint operators},
  volume~18 of {\em Translations of Mathematical Monographs}.
\newblock American Mathematical Society, Providence, R.I., 1969.

\bibitem{KWsurvey}
V.~Kaftal and G.~Weiss.
\newblock A survey on the interplay between arithmetic mean ideals, traces,
  lattices of operator ideals, and an infinite {S}chur-{H}orn majorization
  theorem.
\newblock In {\em Hot topics in operator theory}, volume~9 of {\em Theta Ser.
  Adv. Math.}, pages 101--135. Theta, Bucharest, 2008.

\bibitem{Kalton1998}
N.~Kalton.
\newblock Spectral characterization of sums of commutators. {I}.
\newblock {\em J. Reine Angew. Math.}, 504:115--125, 1998.

\bibitem{KLPS}
N.~Kalton, S.~Lord, D.~Potapov, and F.~Sukochev.
\newblock Traces on compact operators and the noncommutative residue.
\newblock {\em Adv. Math.}, to appear.

\bibitem{KScanada}
N.~Kalton and F.~Sukochev.
\newblock Rearrangement-invariant functionals with applications to traces on
  symmetrically normed ideals.
\newblock {\em Canad. Math. Bull.}, 51(1):67--80, 2008.

\bibitem{KS}
N.~Kalton and F.~Sukochev.
\newblock Symmetric norms and spaces of operators.
\newblock {\em J. Reine Angew. Math.}, 621:81--121, 2008.

\bibitem{Lidskii1959}
V.~Lidskii.
\newblock Conditions for completeness of a system of root subspaces for
  non-selfadjoint operators with discrete spectrum.
\newblock {\em Trudy Moskov. Mat. Ob\v s\v c.}, 8:83--120, 1959.

\bibitem{LSZ}
S.~Lord, F.~Sukochev, and D.~Zanin.
\newblock {\em Singular traces: theory and applications}, volume~46 of {\em De
  Gruyter Studies in Mathematics}.
\newblock Walter de Gruyter, 2012.

\bibitem{vNeumann1932}
J.~von Neumann.
\newblock {\em {Mathematische Grundlagen der Quantenmechanik.}}
\newblock {262 S. Berlin, J. Springer. (Die Grundlehren der Mathematischen
  Wissenschaften in Einzeldarstellungen, Bd. XXXVIII) }, 1932.

\bibitem{Pietsch_nachrichten}
A.~Pietsch.
\newblock Traces and shift invariant functionals.
\newblock {\em Math. Nachr.}, 145:7--43, 1990.

\bibitem{Ringrose1971}
J.~Ringrose.
\newblock {\em Compact non-self-adjoint operators.}, volume~35 of {\em Van
  Nostrand Reinhold Mathematical Studies}.
\newblock Van Nostrand Reinhold Company., 1971.

\bibitem{SSZ}
A.~Sedaev, F.~Sukochev, and D.~Zanin.
\newblock Lidskii-type formulae for {D}ixmier traces.
\newblock {\em Integral Equations Operator Theory}, 68(4):551--572, 2010.

\bibitem{Simon}
B.~Simon.
\newblock {\em Trace ideals and their applications}, volume 120 of {\em
  Mathematical Surveys and Monographs}.
\newblock American Mathematical Society, Providence, RI, second edition, 2005.

\bibitem{SZcrelle}
F.~Sukochev and D.~Zanin.
\newblock Traces on symmetric operator spaces.
\newblock {\em J. Reine Angew. Math.}, to appear, 2012.

\bibitem{Takesaki}
M.~Takesaki.
\newblock {\em Theory of operator algebras. {I}}, volume 124 of {\em
  Encyclopaedia of Mathematical Sciences}.
\newblock Springer-Verlag, Berlin, 2002.

\bibitem{Weyl_logmaj}
H.~Weyl.
\newblock Inequalities between the two kinds of eigenvalues of a linear
  transformation.
\newblock {\em Proc. Nat. Acad. Sci. U. S. A.}, 35:408--411, 1949.

\end{thebibliography}
\end{document}